\newcommand{\Dbmod}[1]{\sfD^{\mrb}(\operatorname{mod} #1 )}
\newcommand{\Kbproj}[1]{\sfK^{\mrb}( #1 \operatorname{proj})}
\newcommand{\agl}[1]{\langle #1 \rangle}
\def\twotilt{\operatorname{2-tilt}}
\def\2silt{\operatorname{2-silt}}
\def\silt{\operatorname{silt}}
\def\tilt{\operatorname{tilt}}
\def\sttilt{\textup{s}\tau\textup{-}\operatorname{tilt}}
\def\grsttilt{\textup{grs}\tau\operatorname{tilt}}
\def\basic{\mathsf{basic}}
\def\stab{\operatorname{stab}}
\def\rad{\operatorname{rad}}
\def\frkm{{\mathfrak{m}}}
\def\frkS{{\mathfrak{S}}}
\def\turn!{\textup{!`}}
\def\chara{\operatorname{char}}
\def\sgn{\operatorname{sgn}}
\def\ad{\operatorname{ad}}
\def\Ind{\operatorname{Ind}}
\def\Res{\operatorname{Res}}
\def\ind{\mathop{\mathrm{ind}}\nolimits}
\def\grmod{\operatorname{grmod}}
\def\mrb{\mathrm{b}}
\def\thick{\mathop{\mathsf{thick}}\nolimits}
\def\kk{{\mathbf k}}
\def\AA{{\Bbb A}}
\def\BB{{\Bbb B}}
\def\CC{{\Bbb C}}
\def\DD{{\Bbb D}}
\def\EE{{\Bbb E}}
\def\FF{{\Bbb F}}
\def\GG{{\Bbb G}}
\def\LL{{\Bbb L}}
\def\NN{{\Bbb N}}
\def\XX{{\Bbb X}}
\def\ZZ{{\Bbb Z}}
\def\cC{{\mathcal C}}
\def\cT{{\mathcal T}}
\def\sfD{{\mathsf{D}}}
\def\sfE{{\mathsf{E}}}
\def\sfF{{\mathsf{F}}}
\def\sfK{{\mathsf{K}}}
\def\tuD{{\textup{D}}}
\def\tuH{{\textup{H}}}
\def\tuK{{\textup{K}}}
\def\id{\operatorname{id}}
\def\mod{\operatorname{mod}}
\def\Ker{\mathop{\mathrm{Ker}}\nolimits}
\def\proj{\operatorname{proj}}
\def\add{\operatorname{add}}
\def\Coker{\operatorname{Cok}}
\def\Hom{\operatorname{Hom}}
\def\End{\operatorname{End}}
\def\Aut{\operatorname{Aut}}
\def\Ext{\operatorname{Ext}}
\newtheorem{lemma}{Lemma}[section]
\newtheorem{proposition}[lemma]{Proposition}
\newtheorem{theorem}[lemma]{Theorem}
\newtheorem{corollary}[lemma]{Corollary}
\theoremstyle{definition}
\newtheorem{remark}[lemma]{Remark}
\newtheorem{example}[lemma]{Example}
\newtheorem{definition}[lemma]{Definition}
\theoremstyle{remark}
\title[skew group algebra extensions]
{ 
$\tau$-tilting theory and  silting theory of  skew group algebra extensions}
\dedicatory{Dedicated to Jun-ichi Miyachi's 65th birthday}
\author[Kimura, Koshio, Kozakai, Minamoto, Mizuno]{Yuta Kimura, Ryotaro Koshio, Yuta Kozakai, Hiroyuki Minamoto, Yuya Mizuno}
\address{Yuta Kimura : Department of Mechanical Systems Engineering, Faculty of Engineering, Hiroshima Institute of Technology, 2-1-1 Miyake, Saeki-ku Hiroshima 731-5143, Japan}
\email{y.kimura.4r@cc.it-hiroshima.ac.jp}
\address{Ryotaro Koshio: Department of Mathematics, Graduate School of Science, Tokyo University of Science 1-3, Kagurazaka, Shinjuku-ku, Tokyo, 162-8601, Japan} 
\email{koshio@rs.tus.ac.jp}
\address{Yuta Kozakai: Department of Mathematics, Tokyo University of Science 1-3, Kagurazaka, Shinjuku-ku, Tokyo, 162-8601, Japan} 
\email{kozakai@rs.tus.ac.jp}
\address{Hiroyuki Minamoto: Department of Mathematics, Graduate School of Science/ Faculty of Science, Osaka Metropolitan University
1-1 Gakuen-cho, Nakaku, Sakai, Osaka 599-8531, Japan}
\email{minamoto@omu.ac.jp}
\address{Yuya Mizuno: Faculty of Liberal Arts, Sciences and Global Education / Graduate School of Science, Osaka Metropolitan University, 1-1 Gakuen-cho, Naka-ku, Sakai, Osaka 599-8531, Japan}
\email{yuya.mizuno@omu.ac.jp}
\subjclass[2020]{Primary: 16G10, Secondary: 16B50, 16E35, 16S35}
\begin{document}
%
%
%

\maketitle

\begin{abstract}
Let $\Lambda$ be a finite dimensional algebra with an action by a finite group $G$ 
and $A:= \Lambda *G$ the skew group algebra. 
One of our main results asserts that  the canonical restriction-induction adjoint pair of the skew group algebra extension $\Lambda \subset A$ 
induces a poset isomorphism between the poset of   $G$-stable support $\tau$-tilting modules over $\Lambda$ and 
that of \textup{(}$\mod G$\textup{)}-stable support $\tau$-tilting modules over $A$. 
We also establish  a similar poset isomorphism of posets of  appropriate classes of silting complexes over $\Lambda$ and $A$. 
These two results  generalize and  unify  preceding results by Zhang-Huang, Breaz-Marcus-Modoi and the second and the third authors. 
Moreover, we give a practical  condition under which $\tau$-tilting finiteness and silting discreteness of $\Lambda$ are inherited to those of $A$.  
As applications we study $\tau$-tilting theory and silting theory of the (generalized) preprojective algebras and the folded mesh algebras. 
Among other things,  we determine the posets of support $\tau$-tilting modules and of silting complexes over 
preprojective algebra $\Pi(\LL_{n})$ of type $\LL_{n}$.  
\end{abstract}

\tableofcontents

\section{Introduction}

A decade has been passed since the introduction of $\tau$-tilting theory \cite{AIR} and silting theory \cite{AI}. 
During these years, those theories have been extensively studied in many researchers 
and now become basic and fundamental subjects of representation theory of algebras. 
These theories study hidden symmetry of the module category and the derived category of algebras 
by using specific classes of objects, $\tau$-tilting modules and silting complexes. 
Thus, these theories have connections to various areas of mathematics. 
Among other things, we point out that since silting theory is strongly related to $t$-structures of the derived category, 
 it is effectively used to study of Bridgeland's stability conditions \cite{Bridgeland}. 
 
The central aim of the present paper is 
to study $\tau$-tilting theory and silting theory of the skew group algebra extension 
$\Lambda \subset \Lambda *G$. 
Namely, given an algebra $\Lambda$ with an action $G\curvearrowright \Lambda$ by a finite group $G$, 
we study relationships between $\tau$-tilting modules and silting complexes over $\Lambda$ and 
the skew group algebra $A:= \Lambda * G$ via the induction and the restriction functor. 
A detailed explanation is given in the next section, in which the main results are summarized as Theorem \ref{202404091842}. 
Another important result proves that  under  a practical  condition, $\tau$-tilting finiteness and silting discreteness of $\Lambda$ are inherited. 
It is summarized in Theorem \ref{202404101701}.  
In Section \ref{202406031540}, we  review recent related results which study a triangulated category $\cT$ 
with an action $G \curvearrowright \cT$ by a finite group $G$, and then we explain relationships between these results and our results.

As applications we study $\tau$-tilting theory and silting theory of the (generalized) preprojective algebras and the folded mesh algebras in Section \ref{202404102228}. 
Among other things,  we determine the posets of support $\tau$-tilting modules and of silting complexes of 
preprojective algebra $\Pi(\LL_{n})$ of type $\LL_{n}$.

\subsection{$\tau$-tilting theory and silting theory of skew group algebra extensions}\label{202406031522}

Let $\Lambda$ be a finite dimensional algebra, $G$ a finite group acting on $\Lambda$ and $A:= \Lambda * G$ the skew group algebra. 
The canonical inclusion $\Lambda \subset A$  induces the  adjoint pair  $(\Ind_{\Lambda}^{A}, \Res_{\Lambda}^{A})$ 
of the induction functor $\Ind_{\Lambda}^{A} = A \otimes_{\Lambda} -$ and 
the restriction functor $\Res_{\Lambda}^{A}$ of the categories of finite dimensional (left) modules.
\[
\Ind^{A}_{\Lambda}: \mod \Lambda \rightleftarrows \mod A: \Res_{\Lambda}^{A}.
\]
The one of the main aims of this paper is to compare 
support $\tau$-tilting modules $M$ over $\Lambda$ and those $N$ of over $A$, via the adjoint pair  $(\Ind_{\Lambda}^{A}, \Res_{\Lambda}^{A})$.
Namely we ask which support $\tau$-tilting modules over $\Lambda$ (resp. $A$) 
are sent to those over $A$ (resp. $\Lambda$) by the functor $\Ind_{\Lambda}^{A}$ (resp. $\Res_{\Lambda}^{A}$).

Since it is  a natural question, there are several previous research of this problem (or of equivalent one).

\subsubsection{Preceding results}

Zhang-Huang \cite{ZH} studied a skew group algebra extension $\Lambda \subset A$ where $A:= \Lambda * G$ under the assumption that $\chara \kk \nmid |G|$. 
Their main result proved that  
the induction functor $\Ind_{\Lambda}^{A}$ sends 
$G$-stable support $\tau$-tilting modules $M$ over $\Lambda$  
to support $\tau$-tilting modules $\Ind_{\Lambda}^{A}(M)$ over $A$ 
and that this map  is injective (for the notion  $G$-stability see Definition \ref{202309151111}). 
They also gave a restriction on the image of the map induced by  $\Ind_{\Lambda}^{A}$.   
But they could not identify the image of this map.

Breaz-Marcus-Modoi \cite{BMM} dealt with 
 the extension  $\Lambda \subset A$ of a  strongly $G$-graded algebra $A=\bigoplus_{g\in G} A_{g}$ and its degree $e_{G}$-part $\Lambda := A_{e_{G}}$ and compared support $\tau$-tilting modules of $\Lambda$ and $A$ via the canonical adjoint pair.  
(We note that
as is explained in Appendix \ref{202404011608}, 
this problem is equivalent to  
that of skew group algebra extension by a finite group $G$,  
via Cohen-Montgomery duality.)
They worked with the assumption that $\Lambda$ is self-injective. 
They showed that 
the induction functor $\Ind_{\Lambda}^{A}$ sends 
$G$-stable support $\tau$-tilting modules $M$ over $\Lambda$  
to support $\tau$-tilting modules $\Ind_{\Lambda}^{A}(M)$ over $A$. 
 Moreover, under the additional assumption $\chara \kk \nmid |G|$, 
 they also find a condition for a support $\tau$-tilting module $N$ over $A$ that ensures that $\Res_{\Lambda}^{A}(N)$ is 
 a support $\tau$-tilting modules over $\Lambda$.

Finally, 
the second author and the third author of this paper \cite{KK} dealt with a group extension $N \triangleleft H$.
 They compared support $\tau$-tilting modules of $\Lambda:= \kk N$ and $A:=\kk H$ of group algebras 
 via the canonical adjoint pair induced from  the extension $\Lambda =\kk N \subset \kk H = A$. 
 It is noteworthy that they did not  put  any hypothesis on the characteristic of the base field $\kk$. 
(We note that 
 $A=\kk H$ has a canonical structure of a strongly $G:=H/N$-graded algebras such that $A_{e_G} = \kk N$, see Appendix \ref{202404011608}.) 
They proved that the functor $\Ind_{\Lambda}^{A}$ induces an injective map 
from the poset of $G$-stable support $\tau$-tilting modules over $\Lambda$ to the poset of those over $A$. 
This is analogous to the aforementioned result by Zhang-Huang.
However,  they additionally succeeded to identify the image of the map induced from the functor $\Ind_{\Lambda}^{A}$.

\subsubsection{Poset isomorphisms}

In the paper we deal with  
the skew group algebra extension $\Lambda \subset \Lambda* G$ without any hypothesis on the characteristic of $\kk$, 
and obtain   results that unifies and generalizes previous results recalled above.
One of the novelties of the paper is the notion of \emph{\textup{(}$\mod G$\textup{)}-stability} of a module $N$ over $A$ (Definition \ref{202309171251}).  
This notion precisely identifies the image of the map induced by $\Ind_{\Lambda}^{A}$. 
Namely we prove that the functor  $\Ind_{\Lambda}^{A}$   
induces  a  poset isomorphism between the poset 
$(\sttilt \Lambda)^{G}$ of $G$-stable support $\tau$-tilting modules 
and the poset $(\sttilt A)^{\mod G}$ of \textup{(}$\mod G$\textup{)}-stable support $\tau$-tilting modules, 
and that  the functor $\Res_{\Lambda}^{A}$ induces the inverse map between these posets.

In the paper, 
we also compare  silting complexes, $2$-term silting complexes and tilting complexes via the canonical adjoint pair  
and obtain similar poset isomorphisms between the posets  of these complexes.

The following theorem summarizes our results about the skew group algebra extension. 
\begin{theorem}\label{202404091842}
Let $\Lambda$ be a finite dimensional algebra with an action of a finite group $G$ 
and $A:= \Lambda * G$ the skew group algebra. 
Then the adjoint pair $(\Ind^{A}_{\Lambda},\Res_{\Lambda}^{A})$ induces the following 
commutative diagram of the poset morphisms 
\[
\begin{xymatrix}@C=80pt@R=15pt{
(\sttilt \Lambda)^{G} \ar@<2pt>[r]^{\Res^{A}_{\Lambda}} 
& (\sttilt A)^{\mod G} \ar@<2pt>[l]^{\Ind_{\Lambda}^{A}}  \\
(\2silt \Lambda)^{G} \ar@<2pt>[r]^{\Res^{A}_{\Lambda}}  \ar[u]^{\tuH^0} \ar@{^{(}->}[d]
& (\2silt A)^{\mod G} \ar@<2pt>[l]^{\Ind_{\Lambda}^{A}} \ar[u]_{\tuH^0} \ar@{^{(}->}[d] \\
(\silt \Lambda)^{G} \ar@<2pt>[r]^{\Res^{A}_{\Lambda}}   
& (\silt A)^{\mod G} \ar@<2pt>[l]^{\Ind_{\Lambda}^{A}}  \\
(\tilt \Lambda)^{G} \ar@<2pt>[r]^{\Res^{A}_{\Lambda}} \ar@{^{(}->}[u]  
& (\tilt A)^{\mod G} \ar@<2pt>[l]^{\Ind_{\Lambda}^{A}} \ar@{^{(}->}[u]  \\
}\end{xymatrix}
\]
where the horizontal arrows are poset isomorphisms inverse to each other.  
The vertical arrows $\tuH^{0}$ from the middle to the top are induce from the poset isomorphism $\tuH^{0}$ between 
$2$-term silting complexes and support $\tau$-tilting complexes. 
\end{theorem}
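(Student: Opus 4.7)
The plan is to work from the silting row, since the other three rows of horizontal isomorphisms follow once that one is in hand. The key computational inputs are: (a) the extension $\Lambda \subset A$ is Frobenius, so the adjoint pair $(\Ind^A_\Lambda, \Res^A_\Lambda)$ is in fact biadjoint; (b) both functors are exact and preserve projectives, since $A$ is projective on both sides over $\Lambda$; and (c) there are canonical isomorphisms $\Res^A_\Lambda \Ind^A_\Lambda M \cong \bigoplus_{g \in G} M^g$ of $\Lambda$-modules, where $M^g$ denotes the $g$-twist, and $\Ind^A_\Lambda \Res^A_\Lambda N \cong \kk G \otimes_\kk N$ of $A$-modules with the diagonal action. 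These formulas reduce everything to bookkeeping with $G$-orbits.

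First I would show that $\Ind^A_\Lambda$ sends a $G$-stable silting complex $M$ over $\Lambda$ to a $(\mod G)$-stable silting complex over $A$. The positive $\Hom$-vanishing follows from
\[
\Hom_A(\Ind^A_\Lambda M, \Ind^A_\Lambda M[i]) \cong \bigoplus_{g \in G} \Hom_\Lambda(M, M^g[i]),
\]
which is zero for $i > 0$ because $M^g \in \add M$ by $G$-stability. Thick generation of $\mathsf{K}^b(\operatorname{proj} A)$ follows from $\Ind^A_\Lambda \Lambda = A$, and $(\mod G)$-stability of $\Ind^A_\Lambda M$ reduces to $\kk G \otimes_\kk \Ind^A_\Lambda M \in \add \Ind^A_\Lambda M$, which is direct. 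Conversely, for $(\mod G)$-stable silting $N$ over $A$, the restriction $\Res^A_\Lambda N$ is automatically $G$-stable via the $G$-action inside $N$, and
\[
\Hom_\Lambda(\Res^A_\Lambda N, \Res^A_\Lambda N[i]) \cong \Hom_A(\Ind^A_\Lambda \Res^A_\Lambda N, N[i]) = 0 \quad (i > 0),
\]
since $\Ind^A_\Lambda \Res^A_\Lambda N \cong \kk G \otimes_\kk N$ lies in $\add N$ by $(\mod G)$-stability.

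That the two operations are mutually inverse on basic classes then follows from $\add \Res^A_\Lambda \Ind^A_\Lambda M = \add M$ for $G$-stable $M$ and $\add \Ind^A_\Lambda \Res^A_\Lambda N = \add N$ for $(\mod G)$-stable $N$, which are immediate from (c). Order preservation follows from exactness of both functors together with the $\Hom$-vanishing characterization of the silting order. For the remaining three rows, $\Ind^A_\Lambda$ and $\Res^A_\Lambda$ commute with $\tuH^0$ by exactness, propagating the isomorphism between the $\2silt$ and $\sttilt$ rows; and they preserve the ``concentrated in two degrees'' and ``tilting'' conditions term by term, propagating the isomorphism to the $\2silt$ and $\tilt$ rows. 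The notions of $G$-stability and $(\mod G)$-stability are $\add$-closure conditions, hence compatible with all vertical inclusions.

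The main obstacle is the converse direction when $\chara \kk$ divides $|G|$, where Maschke-type splitting of $\Ind^A_\Lambda \Res^A_\Lambda$ fails and one cannot argue by averaging. The definition of $(\mod G)$-stability is designed precisely to furnish the needed substitute: in the semisimple regime it is automatic for any silting $N$ in the image of $\Ind^A_\Lambda$, whereas in general it is a genuine closure condition that exactly cuts out that image. Verifying that these two stability notions match on the nose---no larger, no smaller---is the heart of the proof, and is what allows the theorem to hold in arbitrary characteristic and so to subsume the results of Zhang-Huang, Breaz-Marcus-Modoi, and the second and third authors.
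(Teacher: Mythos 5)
Your overall route---establish the silting row from the biadjunction, the two $\add$-identities, and exactness, then propagate to the other three rows---is essentially the paper's route (Lemma \ref{202310012156}, Theorem \ref{202403121806}, Proposition \ref{202403121843}). But there is a genuine gap at the step you call ``immediate from (c)'': the identity $\add \Ind_{\Lambda}^{A}\Res_{\Lambda}^{A}N = \add N$ for a \textup{(}$\mod G$\textup{)}-stable $N$. The inclusion $\Ind_{\Lambda}^{A}\Res_{\Lambda}^{A}N \cong \kk G\otimes_{\kk}^{G}N \in \add N$ does follow from the definition of \textup{(}$\mod G$\textup{)}-stability by taking $X=\kk G$, but the reverse containment $N \in \add \Ind_{\Lambda}^{A}\Res_{\Lambda}^{A}N$ is precisely where Maschke fails in bad characteristic, and it is false for general $N$: with $\Lambda=\kk$, $G=C_{2}$, $\chara\kk=2$, the module $N=\kk_{G}\oplus\kk G$ is \textup{(}$\mod G$\textup{)}-stable yet $\add FR(N)\subsetneq\add N$. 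Without this containment, $F\circ R$ could return a proper direct summand of $N$, so your horizontal arrows would not be mutually inverse. The paper closes this using rigidity of $N$ (Lemma \ref{202309150909}, Propositions \ref{202309150913} and \ref{2023091509131}): since $\Hom(-,-[1])$ vanishes on $\add N$, the filtration of $\kk G\otimes_{\kk}^{G}N$ induced by a composition series of $\kk G$ splits, giving $\kk G\otimes_{\kk}^{G}N\cong\bigoplus_{S}S\otimes_{\kk}^{G}N$ over the composition factors $S$ of $\kk G$; as the trivial module $\kk_{G}$ is such a factor, $N\cong\kk_{G}\otimes_{\kk}^{G}N$ is a summand of $FR(N)$. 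Your closing paragraph correctly names this matching of the two stability notions as ``the heart of the proof,'' but supplies no argument for it. (An alternative repair: $N$ and $FR(N)^{\basic}$ are both silting, hence have the same number of indecomposable summands, so $FR(N)\in\add N$ already forces $\add$-equality---but this has to be said.)

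Two smaller omissions. First, in the restriction direction of the silting row you verify $\Hom$-vanishing but not that $R(N)$ generates $\Kbproj{\Lambda}$ as a thick subcategory; this needs the observation that $\Lambda$ is a direct summand of $R(A)$. Second, for the vertical arrows you assert that $\Ind_{\Lambda}^{A}$ and $\Res_{\Lambda}^{A}$ ``commute with $\tuH^{0}$ by exactness,'' which gives commutativity of the top square, but the claim that $\tuH^{0}$ restricts to bijections $(\2silt\Lambda)^{G}\to(\sttilt\Lambda)^{G}$ and $(\2silt A)^{\mod G}\to(\sttilt A)^{\mod G}$ requires the inverse map $M\mapsto P_{M}\oplus P[1]$ to preserve stability. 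This is Proposition \ref{202403121843}, and its proof is not formal: it uses that $RF$ preserves minimal projective presentations (Corollary \ref{202309180059}) to handle $P_{M}$, and a separate adjunction argument to show the complement projective $P$ satisfies $\add RF(P)=\add P$.
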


The top horizontal isomorphisms are established in Theorem \ref{202309301601}.  
The second, the third and the fourth  horizontal isomorphisms, as well as the commutativity of the second and third squares are established in Theorem \ref{202403121806}. 
Proposition \ref{202403121843} proves that two vertical arrows $\tuH^{0}$ are poset isomorphisms. 
The commutativity of the first square is easy to check and its verification is left to the readers.

In a subsequent work, we establish similar correspondences of torsion theories, semi-bricks and wide subcategories over $\Lambda$ and $A$.

\subsubsection{$\tau$-tilting finiteness and silting discreteness} 

In general \textup{(}$\mod G$\textup{)}-stability is difficult to check. 
However,   next theorem tells that under certain conditions, 
$\tau$-tilting finiteness (resp. silting discreteness) of $\Lambda$ is inherited to $A=\Lambda * G$ 
and   moreover that all support $\tau$-tilting modules (resp. silting complexes) over $A$ are \textup{(}$\mod G$\textup{)}-stable. 

The basic setup of the theorem is that,
as is often with many applications, 
an algebra $\Lambda = \kk Q/I$ is  given by a quiver $Q$ with relations $I$ 
and an action $G\curvearrowright \Lambda$ is  induced from an action $G\curvearrowright Q$ on the quiver $Q$. 
For a vertex $e\in Q_{0}$, we denote by $\stab(e) < G$ the stabilizer group.

\begin{theorem}\label{202404101701}
In the above setup, we further assume  that for each vertex $e \in Q_{0}$, the group algebra $\kk \stab(e)$ is local.
If $\Lambda$ is a $\tau$-tilting finite (resp. silting discrete), 
then so is $A$ and moreover all support $\tau$-tilting modules (resp. silting complexes) are \textup{(}$\mod G$\textup{)}-stable. 
Consequently the adjoint pair $(\Ind^{A}_{\Lambda}, \Res_{\Lambda}^{A})$ induces the following poset isomorphisms 
\[
\begin{split}
 \Ind_{\Lambda}^{A}: (\sttilt \Lambda)^{G}  \ \ &\longleftrightarrow \ \ \sttilt A : \Res_{\Lambda}^{A}\\
 \textup{(resp. }  \Ind_{\Lambda}^{A}: (\silt \Lambda)^{G} \ \ &\longleftrightarrow  \ \ \silt A : \Res_{\Lambda}^{A} \textup{)}\\
\end{split}
\]
\end{theorem}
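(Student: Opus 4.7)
The plan is to leverage Theorem~\ref{202404091842} to reduce the statement to the assertion that under the local hypothesis we have $(\sttilt A)^{\mod G}=\sttilt A$ (resp.\ $(\silt A)^{\mod G}=\silt A$), equivalently that every support $\tau$-tilting module (resp.\ silting complex) over $A$ is automatically $(\mod G)$-stable. Once this equality is established, the claimed poset isomorphism is immediate from the top (resp.\ third) row of the diagram in Theorem~\ref{202404091842}, and the $\tau$-tilting finiteness (resp.\ silting discreteness) of $A$ is inherited, since the corresponding poset is then in bijection with the subposet $(\sttilt\Lambda)^{G}\subseteq\sttilt\Lambda$ (resp.\ $(\silt\Lambda)^{G}\subseteq\silt\Lambda$), which is finite (resp.\ has finite intervals) by hypothesis.

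The strategy for proving the key equality is to verify $(\mod G)$-stability for the indecomposable summands that appear in any support $\tau$-tilting $A$-module. The starting point is the indecomposable projective case. The projective $A$-module $Ae$ associated to a vertex $e\in Q_0$ has endomorphism ring $\End_A(Ae)=eAe\cong (e\Lambda e)*\stab(e)$, and since $e\Lambda e$ is local with residue field $\kk$, the crossed-product algebra $(e\Lambda e)*\stab(e)$ is local precisely when $\kk\stab(e)$ is. Thus, under the local hypothesis, $Ae$ is indecomposable, and the indecomposable projective $A$-modules are parametrized by $G$-orbits of $Q_0$. For any $V\in\mod G$, the projective $Ae\otimes_\kk V$ has $\Lambda$-restriction $(\Res Ae)^{\oplus \dim V}$, whose indecomposable $\Lambda$-summands all lie in the orbit $G\cdot (\Lambda e)$; matching this against the orbit indexing of indecomposable projectives forces $Ae\otimes V\in\add(Ae)$, establishing $(\mod G)$-stability for indecomposable projectives.

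To pass from projectives to indecomposable summands of support $\tau$-tilting modules and silting complexes, I would invoke the $\tuH^{0}$-correspondence of Theorem~\ref{202404091842}: an indecomposable summand of a support $\tau$-tilting $A$-module arises as $\tuH^0$ of an indecomposable $2$-term silting summand $P^{-1}\xrightarrow{f}P^{0}$, and more generally indecomposable summands of silting complexes are bounded complexes of indecomposable projectives. Applying the projective case termwise gives $(T\otimes V)^{i}\in\add T^{i}$ for each $i$. Combining this termwise conclusion with the locality of the endomorphism rings of indecomposable silting summands — which again follows from the local hypothesis via the same crossed-product analysis applied to the pair of projectives underlying the summand — should yield $T\otimes V\in\add T$ in $\sfK^{\mrb}(A\operatorname{proj})$, completing the proof.

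The main technical obstacle will be this last step: promoting termwise $(\mod G)$-stability to $(\mod G)$-stability of the complex as a whole. A priori $T\otimes V$ could recombine its summands in a way that breaks the Krull--Schmidt pattern of $T$, so one needs careful bookkeeping, likely via a parametrisation of indecomposable silting summands in terms of pairs of projective covers (generalising the $2$-term case) together with a Krull--Schmidt argument using local endomorphism rings. The projective case is clean, but controlling this extension to arbitrary silting summands is where the hypotheses are fully used and is the heart of the proof.
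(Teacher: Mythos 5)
Your reduction to the equality $(\sttilt A)^{\mod G}=\sttilt A$ (resp.\ $(\silt A)^{\mod G}=\silt A$) and your treatment of the indecomposable projectives are sound and agree with the paper (Lemma \ref{202309301224}), modulo the minor imprecision that $eAe$ is not literally $(e\Lambda e)*\stab(e)$ but only has $\kk\stab(e)$ as its semisimple quotient, which is what is actually needed. However, the step you flag as ``the main technical obstacle'' is a genuine gap, not a bookkeeping issue, and the route you propose for closing it cannot work. Knowing that each term of $T\otimes_{\kk}^{G}V$ lies in $\add T^{i}$ says nothing about the differentials: two complexes with identical terms and local endomorphism rings of summands can be non-isomorphic, so no Krull--Schmidt argument on terms can recover $T\otimes_{\kk}^{G}V\in\add T$. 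More tellingly, your argument never uses the $\tau$-tilting finiteness (resp.\ $G$-stable silting discreteness) of $\Lambda$ to establish stability --- it would prove that \emph{every} support $\tau$-tilting $A$-module is ($\mod G$)-stable under the local-stabilizer hypothesis alone, whereas the finiteness hypothesis is essential to the stability assertion itself, not merely to the finiteness conclusion.

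The paper's proof (Theorems \ref{202309301357} and \ref{202403311841}) takes a different and unavoidable route: it introduces ind-($\mod G$)-stability and proves (Lemma \ref{202309301439}) that it is preserved under irreducible mutation, by using the adjunction $(X\otimes_{\kk}^{G}-,\tuD(X)\otimes_{\kk}^{G}-)$ of Proposition \ref{2023010031837} to show that $X\otimes_{\kk}^{G}f$ is again a minimal left approximation whenever $f$ is, so that the mutation cokernel satisfies $X\otimes_{\kk}^{G}C\cong C^{\oplus\dim X}$. Starting from $A^{\basic}$, which is ind-($\mod G$)-stable by the projective case, every iterated mutation is ($\mod G$)-stable; finiteness of $(\sttilt A)^{\mod G}\cong(\sttilt\Lambda)^{G}$ then forces the mutation class of $A$ to be finite, and by \cite[Corollary 2.38]{AIR} to exhaust $\sttilt A$ (with the analogous bounded-interval argument in the silting case). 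This is exactly where the hypothesis on $\Lambda$ enters: without it the Hasse quiver can have several components (as in Example \ref{202309271835}(III)) and non-($\mod G$)-stable support $\tau$-tilting modules do occur. To repair your proof you would need to replace the termwise analysis by a mutation-stability argument of this kind.
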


Theorem \ref{202404101701} about  support $\tau$-tilting modules (resp. silting complexes) is proved as  
Theorem \ref{202309301357} (resp. Theorem \ref{202403311841}) under a slightly more general  condition.

We note that if the action of $G$ on the vertex set $Q_{0}$ is free, the assumption of Theorem \ref{202404101701} is satisfied. 
For the convenience of the readers, we recall the following well-known fact in modular representation theory of a finite group. 

\begin{lemma}[See for example {\cite[Corollary 3.2.4]{Schneider}}]\label{2023100319381}
Assume that $\kk$ is an algebraically closed field of characteristic $p=\chara \kk >0$. 
The group algebra $\kk G$ is local algebra if and only if $G$ is a  $p$-group. 
\end{lemma}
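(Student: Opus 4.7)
The plan is to handle the two directions separately, relying only on elementary tools (no Brauer theory on the number of simple modules is needed).

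For the direction $G$ a $p$-group $\Longrightarrow$ $\kk G$ local, I would first show by induction on $|G|$ the well-known fixed-point fact that every non-zero finite-dimensional $\kk G$-module $V$ satisfies $V^G \neq 0$. In the inductive step, since a non-trivial $p$-group has non-trivial center, one can pick a central element $z$ of order $p$; then $(z-1)^p = z^p - 1 = 0$ in characteristic $p$, so $z-1$ acts nilpotently on $V$ and $V^{\langle z \rangle} = \ker(z-1|_V) \neq 0$. As $\langle z \rangle$ is central (hence normal), the subspace $V^{\langle z \rangle}$ is $G$-stable, so applying the induction hypothesis to the action of $G/\langle z \rangle$ on $V^{\langle z \rangle}$ gives $V^G \neq 0$. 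Applied to a simple $V$, this forces $V^G = V$, so $V$ is a trivial module and, being simple over an algebraically closed field, $1$-dimensional. Hence the trivial module is the unique simple $\kk G$-module, so Wedderburn--Artin yields $\kk G/\rad(\kk G) \cong \kk$, and $\kk G$ is local.

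For the converse, I would argue by contrapositive. If $G$ is not a $p$-group, pick an element $g \in G$ of prime order $q \neq p$. Since $q$ is a unit in $\kk$, the element $e := q^{-1}(1 + g + g^2 + \cdots + g^{q-1}) \in \kk G$ is well-defined, and one checks directly (using $g \cdot \sum_{i} g^{i} = \sum_{i} g^{i}$) that $e^2 = e$. Linear independence of $1, g, \dots, g^{q-1}$ in $\kk G$ rules out both $e = 0$ and $e = 1$, so $e$ is a non-trivial idempotent. A local ring admits no non-trivial idempotents, so $\kk G$ cannot be local.

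The only step with any real substance is the fixed-point induction in the first direction, which hinges on the nilpotency $(z-1)^p = 0$ in characteristic $p$ together with the normality of the central subgroup $\langle z \rangle$; everything else reduces to standard facts about idempotents, linear independence of group elements in a group algebra, and the structure of semisimple algebras over an algebraically closed field.
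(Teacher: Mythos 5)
Your proof is correct. The paper itself gives no argument for this lemma, citing \cite[Corollary 3.2.4]{Schneider} instead, so there is nothing to compare against line by line; your argument is the standard one. Both directions check out: the fixed-point induction (non-trivial centre, $(z-1)^p=z^p-1=0$ in characteristic $p$, normality of $\langle z\rangle$ making $V^{\langle z\rangle}$ a $G/\langle z\rangle$-module) correctly shows the trivial module is the unique simple module, hence $\kk G/\rad(\kk G)\cong\kk$; and for the converse the idempotent $e=q^{-1}\sum_{i=0}^{q-1}g^{i}$ attached to an element of prime order $q\neq p$ (which exists by Cauchy's theorem) is indeed non-trivial by linear independence of group elements. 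One minor remark: the hypothesis that $\kk$ be algebraically closed is never actually needed in your argument --- a simple module with trivial $G$-action is automatically $1$-dimensional over any field --- so your proof establishes the slightly stronger statement for an arbitrary field of characteristic $p$.
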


%
%

\subsection{Related results}\label{202406031540}
In this section, 
we  review recent related results which study a triangulated category $\cT$ 
with an action $G \curvearrowright \cT$ by a finite group $G$ and 
then we explain relationships between these results and our results. 
We assume that a triangulated category $\cT$ is $\Hom$-finite and idempotent complete in the sequel. 
We denote by $\cT^{G}$ the category of $G$-equivariant objects of $\cT$, which is triangulated, in nice situations, by \cite{Balmer, Chen: a note, Elagin}.

Chen-Chen-Ruan \cite{CCR} deals with a triangulated category $\cT$ equipped with an action $G\curvearrowright \cT$ by a finite abelian group $G$. 
The point here is that the category $\cT^{G}$ of $G$-equivariant object comes equipped with an action 
$\hat{G}\curvearrowright \cT^{G}$ by the character group $\hat{G}$. 
Then  as one of their main results \cite[Theorem A]{CCR}, they proved that 
$G$-stable tilting objects of $\cT$ is bijectively corresponds to $\hat{G}$-stable tilting objects of $\cT^{G}$ under the assumption that 
the group algebra $\kk G$ is a direct product of $\kk$.

Dell-Heng-Licata \cite{DHL}, first study stability conditions of a triangulated category $\cT$ equipped with an action $\cC \curvearrowright \cT$ 
of a fusion category $\cC$ (for fusion categories we refer \cite{EGNO}).  
The point here is that the category $\cT^{G}$ of $G$-equivariant object comes equipped with an action 
$\mod G \curvearrowright \cT^{G}$ by the tensor category $\mod G$. 
We note that they need to  assume that $\chara \kk \nmid |G|$ so that the tensor category $\mod \kk G$ is a fusion category. 
One of their main result  \cite[Theorem B]{DHL} proves that 
the space $\textup{stab}_{G}(\cT)$ of $G$-equivariant stability conditions of $\cT$ is homeomorphic to 
the space $\textup{stab}_{\mod G}(\cT^{G})$ of ($\mod G$)-equivariant stability conditions of $\cT$. 
We note that 
in \cite[Theorem B]{DHL}, the action $G\curvearrowright \cT$ is reinterpreted to the action $\mathsf{vec}(G) \curvearrowright \cT$ 
by the category $\mathsf{vec}(G)$ of $G$-graded vector spaces.  
(The category $\mathsf{vec}(G)$ is isomorphic to the category $\kk G^{*}\mod$ of $\kk G^{*}$-modules where 
$\kk G^{*}=\Hom_{\kk}(\kk G, \kk)$ denotes the dual Hopf algebra of $\kk G$. 
 The above reinterpretation is explained in Appendix \ref{202403292142}.)

Now we explain relationships between above results and our results. 
Thus as in the previous section, let $\Lambda$ be a finite dimensional algebra with an action $G\curvearrowright \Lambda$ by a finite group 
and $A:= \Lambda *G$.
The category $\mod \Lambda$ of $\Lambda$-modules is equipped with the induced action $G\curvearrowright \mod \Lambda$. 
It is well-know that $(\mod \Lambda)^{G}\simeq \mod A$. 
Now we assume that $\chara \kk \nmid |G|$. 
Then by\cite[Proposition 4.5]{Chen: a note},  we have an equivalence 
$\sfD^{\mrb}(\mod \Lambda)^{G}\simeq \sfD^{\mrb}( (\mod \Lambda)^{G}) \simeq \sfD^{\mrb}(\mod A)$ of triangulated categories. 
As we mentioned before, silting complexes and stablity conditions are closely related. 
Therefore, 
since 
our poset isomorphism $(\silt \Lambda)^{G} \cong (\silt A)^{\mod G}$ of silting complexes is verified without the assumption $\chara \kk \nmid |G|$, 
we conjecture that there exists a homeomorphism 
$\textup{stab}_{G}(\sfD^{\mrb}(\mod \Lambda)) \cong \textup{stab}_{\mod G}(\sfD^{\mrb}(\mod A))$ without any assumption on $\chara \kk$. 
 This conjecture and its variant for algebraic varieties are left to our future study.

If we  further assume that the  group algebra $\kk G$ is a direct product of $\kk$ as in \cite{CCR}, 
then \textup{(}$\mod G$\textup{)}-stable objects of $\sfD^{\mrb}(\mod A)$ are precisely $\hat{G}$-stable objects of $\sfD^{\mrb}(\mod A)$ 
(see for the discussion after Definition \ref{202309271829}). 
Thus our poset isomorphism $(\tilt \Lambda)^{G} \cong (\tilt A)^{\mod G}$ established for a finite group $G$ 
which is not necessarily abelian, can be looked as 
a partial generalization of the result given in  \cite{CCR}
 in  the situation that $\cT = \sfD^{\mrb}(\mod \Lambda)$ and an action $G\curvearrowright \cT$ 
is induced from an action $G \curvearrowright \Lambda$ on an algebra $\Lambda$.

\subsubsection{}

Homma \cite{Homma} deals with the situation that
a (not necessarily finite) group $G$ acting on a $\Hom$-finite triangulated  category $\cT$.
Compare to the setup of our paper, 
 a $\Hom$-finite $\kk$-linear category $\cT$ plays the role of the derived category $\Dbmod{\Lambda}$ with the induced $G$-action. 
Using covering theory, he studied the poset of $G$-stable silting objects of $\cT$. 
One of his main results shows that if  an action  of $G$ on $\ind \cT$ is free, 
then the poset of $G$-stable silting objects of $\cT$ is isomorphic as posets to the silting poset of $\cT'$ 
where $\cT'$ is a $G$-precovering of $\cT$ such that the thick-hull of the image of the induction functor coincide with $\cT'$. 
In the case $\cT=\Dbmod{\Lambda}$, 
the consequence of this result may coincide with the silting part of Theorem  \ref{202404101701}. 
 However, the assumption of freeness for the action $G\curvearrowright \ind \Dbmod{\Lambda}$ is  stronger than our assumption.

\subsection{Organization of the paper}

Section \ref{202404102219} gives preliminaries about a skew group algebra extension $\Lambda \subset A$ where $A:= \Lambda *G$. 
We recall well-known facts about a skew group algebra extension. 
In the case $\chara \kk \nmid |G|$, it is well-known that the Auslander-Reiten translations $\tau$ (of $\Lambda$ and of $A$) commute 
with the functors $\Ind_{\Lambda}^{A}, \ \Res_{\Lambda}^{A}$. 
As one of the technical heart of the paper, we discuss this commutativity without the assumption on the characteristic of the base field.
Other important notions introduced in this section are 
\textup{(}$\mod G$\textup{)}-action on modules $N$ over $A$ and stability with respect to this action which is called \textup{(}$\mod G$\textup{)}-stability.

In Section \ref{202404102224} and Section \ref{202404102225}, 
we study $\tau$-tilting theory and silting theory of a skew group algebra extension, respectively 
and prove the corresponding part of Theorem \ref{202404091842} and Theorem \ref{202404101701}.

In Section \ref{202404102228}, applying results obtained in the previous sections, 
we study silting theory and $\tau$-tilting theory of preprojective algebras and folded mesh algebras.

 In Appendix \ref{202404011608}, we explain that our results about a skew group algebra extensions 
 can be reinterpreted to that about a $G$-graded algebras $A=\bigoplus_{g\in G} A_{g}$ and 
 the canonical adjoint pair $V: \grmod A \rightleftarrows \mod A: U$ between 
 the categories of graded  $A$-modules and that of  ungraded $A$-modules, via Cohen-Montgomery duality. 
 We also explain that 
 specializing the results about a $G$-graded algebra, 
 we obtain results about an extension $A_{e_{G}} \subset A$ of a strongly  $G$-graded algebra by 
 its degree $e_{G}$-part $A_{e_{G}}$. 
 By  this reinterpretation,  our results  gives generalizations of previous papers 
 \cite{BMM, KK}.
 
 In Appendix \ref{202403292142}, we discuss a formal aspect from  theory of tensor categories and Hopf algebras. 
 We provide a formalism  of our result that allows us to 
 give a  possible generalization of our result to an algebra $\Lambda$ with an action of a Hopf algebra $H$.

\subsection{Conventions}

In the paper 
$\kk$ denotes a  (not necessarily algebraically closed) field   of arbitrary characteristic.
Undecorated tensor product $\otimes$ always denotes the tensor product $\otimes_{\kk}$ over $\kk$.  

Groups are assumed to be finite groups. 
Algebras are assumed to be a  finite dimensional $\kk$-algebra. 
Modules over an algebra $\Lambda$ are always finite dimensional left $\Lambda$-modules, unless otherwise is mentioned.  
An action $G \curvearrowright \Lambda$ of a group $G$ on an algebra $\Lambda$ is assumed to 
preserve the algebra structure of $\Lambda$.

Let $M$ be a $\Lambda$-module. 
We denote its basic part  by $M^{\basic}$, i.e., a basic module  such that $\add M^{\basic} =\add M$. 
We note that $M^{\basic}$ is unique up to isomorphisms. 
We denote by $|M|$ the number of the indecomposable direct summands of $M^{\basic}$. 

In the same way, we define the basic part $M^{\basic}$ of an object $M$ of the derived category $\Dbmod{\Lambda}$.

\subsection*{Acknowledgment}
The authors thank H. Asashiba for answering a question about relationship between skew group algebras and orbit categories,  
and suggesting  his paper \cite{Asashiba}. 
We also appreciate his   comments and suggestions to the first draft of this paper.
The authors thank T. Adachi for discussions about the paper \cite{Adachi-Kase} by him with R. Kase. 
The authors thank T. Homma for explaining his results give in \cite{Homma}. 
The authors thank N. Hiramae for pointing out an error in the first draft of this paper. 
The authors would like to express their gratitude to R. Wang for bringing the papers \cite{CW1,CW2} to their attention.

The first author is partly supported by Grant-in-Aid for JSPS Fellows JP22J01155.
The fourth author is partly supported by  Grant-in-Aid for Scientific Research (C) Grant Number JP21K03210. 
The fifth author  is partly supported by Grant-in-Aid for Scientific Research (C) Grant Number JP20K03539.

\section{Preliminaries about skew group algebra extensions}\label{202404102219}

Let $\Lambda$ be an algebra with an action of a group  $G$. 
Recall that the \emph{skew group algebra}  $A := \Lambda \ast G$ is defines in the following way: 
The underlying $\kk$-vector space of $A$ is defined to be $\Lambda \otimes \kk G$ where $\kk G$ is the group algebra of $G$. 
For $r \in \Lambda$ and $g \in G$, we set $r* g := r \otimes g \in \Lambda \otimes \kk G$.
Then the multiplication of $A$ is given by 
\[
(r*g)(s*h) := rg(s) *gh \ \ \ (r,s \in \Lambda, \ g,h\in G).
\]

\subsection{$G$-action on $\Lambda$-modules and \textup{(}$\mod G$\textup{)}-action on $A$-modules}\label{202403291602}

\subsubsection{$G$-action and $G$-stability}\label{202403291718}
For $g \in G$ and a $\Lambda$-module $M$, we denote by ${}_{g}M$ the $\Lambda$-module whose underlying vector space 
is that of $M$ and the action of $\Lambda$ is twisted by $g$, i.e. 
\[
r \cdot_{{}_{g}M} m := g(r)\cdot_{M} m
\]
for $r \in \Lambda, g \in G$ and $m\in M$. 
We remark that ${}_{h}({}_{g}M) \cong {}_{gh}M$. 
Note that the map ${}_{g}\Lambda \to \Lambda, r \mapsto g^{-1}(r)$ is an isomorphism of $\Lambda$-modules.

\begin{definition}\label{202309151111}
A $\Lambda$-module $M$ is called $G$-\emph{stable} if ${}_{g}M \cong M$ for all $ g\in G$. 
\end{definition}

\subsubsection{$\mod G$ action and \textup{(}$\mod G$\textup{)}-stability} 

\begin{definition}\label{202309161735}
Let $N$ be an $A=\Lambda * G$-module and $X$ a $G$-module. 
We define an $A$-module $X \otimes^{G}_{\kk} N$ in the following way. 
As a $\kk$-vector space $X \otimes^{G}_{\kk} N$ is defined to be $X \otimes_{\kk} N$. 
We define an action of an element  $r* g \in A$ to $x \otimes n \in X \otimes_{\kk} N$ by the following formula:
\[
(r* g)\cdot (x \otimes n) := g(x) \otimes  (r* g) n.
\]
\end{definition}

We note that this construction is already used in representation theory of skew group algebras 
(see e.g., \cite[Definition 5.17]{LW}).

\begin{remark}\label{202407021157}
The construction $(X,M) \mapsto X\otimes_{\kk}^{G} M$ given in Definition \ref{202309161735} 
is natural in $X \in \mod G$ and $M \in \mod \Lambda$ respectively and hence induces a functor $\otimes_{\kk}^{G}: (\mod G) \times (\mod A) \to \mod A$. 
It is straightforward to check that this functor together with the isomorphisms given in Lemma \ref{2023010021729} below endows  
the category $\mod A$ with a structure of a left module category over the tensor category $\mod G$ in the sense of \cite[Definition 7.1.2]{EGNO}.
(It should be noted that familiarity with tensor categories is not necessary for understanding this paper, with the exception of Appendix B.)

In other words,  the construction of Definition \ref{202309161735} gives an action  $\mod G \curvearrowright \mod A$. 
We  note that as is explained in Section \ref{202404102225}, 
this action $\mod G \curvearrowright \mod A$ on $\mod A$ extends to  an action $\mod G \curvearrowright \Dbmod{A}$  on   the derived categories $\sfD^{\mrb}(\mod A)$, which coincides with the \textup{(}$\mod G$\textup{)}-action used in \cite{DHL}.
\end{remark}

\begin{example}
If $\Lambda=\kk$ with the trivial $G$-action, then the skew group algebra $A$ is the group algebra $\kk G$ and 
the $A$-module  $X \otimes_{\kk}^{G} N$ is the $\kk G$-modules $X \otimes_{\kk} N$ with the diagonal action.
\end{example}

We leave the verification of the following lemma to the readers.

\begin{lemma}\label{2023010021729}
For an $A$-module $N$ and $G$-modules $X, Y$, the following assertions hold.
\begin{enumerate}[(1)]
\item $\kk_{G} \otimes_{\kk}^{G} N \cong N$. 

\item $(X \otimes_{\kk}^{G} Y) \otimes_{\kk}^{G} N \cong X \otimes_{\kk}^{G} (Y \otimes_{\kk}^{G} N)$. 
\end{enumerate}
\end{lemma}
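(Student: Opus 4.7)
The plan is to write down in each case the obvious underlying $\kk$-linear isomorphism and then verify that it intertwines the $A$-action defined in Definition \ref{202309161735}. Neither step requires more than bookkeeping once one remembers that the $G$-action on $\kk_{G}$ is trivial and that the $G$-action on a tensor product $X \otimes_{\kk} Y$ of $G$-modules is the diagonal one.

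For (1), I would take the standard unit isomorphism $\varphi\colon \kk_{G} \otimes_{\kk} N \xrightarrow{\sim} N$, $\alpha \otimes n \mapsto \alpha n$. To see $A$-equivariance, fix $r\in \Lambda$, $g\in G$ and compute
\[
\varphi\bigl((r*g)\cdot(\alpha\otimes n)\bigr) = \varphi\bigl(g(\alpha)\otimes (r*g)n\bigr) = \alpha\,(r*g)n = (r*g)\,\varphi(\alpha\otimes n),
\]
where the second equality uses $g(\alpha)=\alpha$ because $\kk_{G}$ is trivial.

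For (2), I would use the usual associativity isomorphism $\psi\colon (X\otimes_{\kk} Y)\otimes_{\kk} N \xrightarrow{\sim} X\otimes_{\kk}(Y\otimes_{\kk} N)$, $(x\otimes y)\otimes n \mapsto x\otimes(y\otimes n)$. To check it is $A$-linear, recall that the $G$-module structure on $X\otimes_{\kk} Y$ is diagonal, so
\[
(r*g)\cdot\bigl((x\otimes y)\otimes n\bigr) = g(x\otimes y)\otimes (r*g)n = \bigl(g(x)\otimes g(y)\bigr)\otimes (r*g)n,
\]
while on the right-hand side,
\[
(r*g)\cdot\bigl(x\otimes (y\otimes n)\bigr) = g(x)\otimes (r*g)(y\otimes n) = g(x)\otimes\bigl(g(y)\otimes (r*g)n\bigr).
\]
These two expressions are identified by $\psi$, so $\psi$ is an isomorphism of $A$-modules.

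The only conceptual point is the compatibility of the diagonal $G$-action on $X\otimes_{\kk} Y$ with Definition \ref{202309161735} — everything else is a direct unwinding of definitions — so there is no serious obstacle; this is why the authors delegate the verification to the reader. Naturality of $\varphi$ in $N$ and of $\psi$ in $(X,Y,N)$ is automatic from the formulas, which is exactly the input needed in Remark \ref{202407021157} to regard $\otimes_{\kk}^{G}$ as giving $\mod A$ the structure of a left $\mod G$-module category.
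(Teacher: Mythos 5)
Your proof is correct: the paper explicitly leaves the verification of this lemma to the reader, and your argument is exactly the intended routine check, correctly identifying that $X\otimes_{\kk}^{G}Y$ carries the diagonal $G$-action and that the standard unit and associativity isomorphisms intertwine the $A$-action of Definition \ref{202309161735}.
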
 

Thanks to the second statement of above lemma, 
we may write both of $(X \otimes_{\kk}^{G} Y) \otimes_{\kk}^{G} N $ and $ X \otimes_{\kk}^{G} (Y \otimes_{\kk}^{G} N)$ 
by $X\otimes_{\kk}^{G} Y \otimes_{\kk}^{G} N$.

\begin{example}\label{202309271819}
Observe that $\Lambda$ has a canonical $A$-module structure. 
For a $G$-module $X$, we often write $\Lambda * X = X \otimes_{\kk}^{G} \Lambda$ 
and in that case an element $x \otimes r$ is written as $r *x$. 
The $A$-module $\Lambda * \kk G = \kk G\otimes_{\kk}^{G} \Lambda$ is canonically isomorphic to the $A$-module 
$A = \Lambda * G$. 

Let $\kk G =\bigoplus_{i=1}^{n} P_{i}$ be a decomposition as a $G$-module. 
Then we have a decomposition $A =\bigoplus_{i=1}^{n} \Lambda * P_{i}$ as an $A$-module. 
\end{example}

\begin{lemma}\label{202309150826}
Let $X$ be a $G$-module. 
Then $X \otimes_{\kk}^{G} A$ is isomorphic to $X \otimes_{\kk} A \cong A^{\oplus \dim X}$ as $A$-$\Lambda$-bimodules.
\end{lemma}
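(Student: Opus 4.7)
The plan is to write down an explicit bimodule isomorphism that ``untwists'' the $G$-action on $X$, and then the decomposition $X \otimes_{\kk} A \cong A^{\oplus \dim X}$ will be a direct consequence of choosing a $\kk$-basis of $X$.

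First I would define the $\kk$-linear map
\[
\phi: X \otimes_{\kk}^{G} A \longrightarrow X \otimes_{\kk} A, \qquad x \otimes (s * h) \longmapsto h^{-1}(x) \otimes (s * h),
\]
where on the target the left $A$-action is given only on the second factor (so the target is visibly $A^{\oplus \dim X}$ as a left $A$-module, and it is naturally an $A$-$\Lambda$-bimodule via the right $\Lambda$-action on $A$). The map $\phi$ is clearly bijective, with inverse $x \otimes (s * h) \mapsto h(x) \otimes (s * h)$.

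Next I would check that $\phi$ is left $A$-linear by a direct calculation with the formula of Definition \ref{202309161735}: for $r \in \Lambda$ and $g \in G$,
\[
\phi\bigl((r * g) \cdot (x \otimes (s * h))\bigr) = \phi\bigl(g(x) \otimes (rg(s) * gh)\bigr) = (gh)^{-1}g(x) \otimes (rg(s) * gh),
\]
which simplifies to $h^{-1}(x) \otimes (r*g)(s*h) = (r*g) \cdot \phi(x \otimes (s*h))$. Right $\Lambda$-linearity is straightforward, since $(s * h) \cdot t = s h(t) * h$ only modifies the $A$-factor and $\phi$ is the identity on the group-component of that factor.

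Finally, picking any $\kk$-basis $\{x_1, \dots, x_n\}$ of $X$ (with $n = \dim X$) gives the decomposition $X \otimes_{\kk} A = \bigoplus_{i=1}^{n} (\kk x_{i} \otimes A) \cong A^{\oplus n}$ as $A$-$\Lambda$-bimodules. I do not anticipate any real obstacle; the only point requiring care is the twist by $h^{-1}$ in the definition of $\phi$, which is forced by the formula $(r*g)(x \otimes n) = g(x) \otimes (r*g)n$.
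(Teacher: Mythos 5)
Your proof is correct and uses exactly the same isomorphism as the paper, namely $\phi(x \otimes (r*g)) = g^{-1}(x) \otimes (r*g)$; the paper simply states that the verification is straightforward, while you carry out the left $A$-linearity and right $\Lambda$-linearity checks explicitly. No issues.
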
 

\begin{proof}
It is straightforward to check that the following map is an isomorphism of $A$-$\Lambda$-bimodules 
\[
\phi: X \otimes_{\kk}^{G} A \to X \otimes_{\kk} A, \phi(x \otimes (r *g) ) := g^{-1}(x) \otimes  (r * g).
\]
\end{proof}

\begin{corollary}\label{202310021728}
Let $Q$ be a projective $A$-module and $X$ a finite dimensional $G$-module. 
Then $X\otimes_{\kk}^{G}Q$ is a projective $A$-module.
\end{corollary}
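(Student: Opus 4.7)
The plan is to reduce to the free case and then invoke the preceding Lemma \ref{202309150826}.

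First I would observe that the construction $N \mapsto X \otimes_{\kk}^{G} N$ is an additive functor from $\mod A$ to $\mod A$: naturality in $N$ is clear from Definition \ref{202309161735} (the $A$-action formula $(r*g)\cdot(x \otimes n) = g(x) \otimes (r*g)n$ is linear in $n$), and additivity follows because the underlying vector space is $X \otimes_{\kk} N$ and the $A$-action distributes over direct sums in the $N$-variable. In particular, the functor preserves direct summands and finite direct sums.

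Next, since $Q$ is a finitely generated projective $A$-module, there exist $n \geq 0$ and an $A$-module $Q'$ such that $Q \oplus Q' \cong A^{\oplus n}$. Applying the functor $X \otimes_{\kk}^{G} -$ yields
\[
(X \otimes_{\kk}^{G} Q) \oplus (X \otimes_{\kk}^{G} Q') \cong X \otimes_{\kk}^{G} A^{\oplus n} \cong (X \otimes_{\kk}^{G} A)^{\oplus n}.
\]
By Lemma \ref{202309150826}, $X \otimes_{\kk}^{G} A \cong A^{\oplus \dim X}$ as left $A$-modules (in fact as $A$-$\Lambda$-bimodules, but only the left structure is needed here). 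Hence $X \otimes_{\kk}^{G} Q$ is a direct summand of the free $A$-module $A^{\oplus n \dim X}$, and is therefore projective.

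There is no real obstacle: the content is entirely contained in Lemma \ref{202309150826}, and the corollary is a formal consequence of additivity. The only point that requires a moment of care is the observation that the functor is well-defined and additive on morphisms, which is immediate from the explicit formula in Definition \ref{202309161735}.
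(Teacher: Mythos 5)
Your proof is correct and is exactly the intended argument: the paper states this as an immediate corollary of Lemma \ref{202309150826}, relying on the additivity of $X\otimes_{\kk}^{G}-$ to pass from the free module $A$ to an arbitrary direct summand. Nothing to add.
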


%

Now we introduce the notion of \textup{(}$\mod G$\textup{)}-stability.

\begin{definition}\label{202309171251}
An $A$-module $N$ is called ($\mod G$)-\emph{stable} 
if $X \otimes_{\kk}^{G} N \in \add N$ for any $X \in \mod G$. 
\end{definition}

We introduce $\XX$-stability.  

\begin{definition}[{The $\mathbb{X}$-stability(cf.\cite[p.1070]{ZH})}]\label{202309271829}
An $A$-module $N$ is called $\XX$-\emph{stable} 
if $S \otimes_{\kk}^{G} N \in \add N$ for any $1$-dimensional simple $G$-module $S$. 
\end{definition}

We explain that the above definition of $\XX$-stability coincides with the original definition given by Zhang-Huang \cite{ZH}.
Let $\chi: G \to \kk^{\times}$ be a group homomorphism and $\kk_{\chi}$ the induced $G$-module. 
For a $1$-dimensional $G$-module $S$, there exists a unique $\chi: G \to \kk^{\times}$ 
such that $S\cong \kk_{\chi}$. 
Then $S \otimes_{\kk}^{G}N = \kk_{\chi}\otimes^{G}_{\kk} N$ is ${}^{\chi^{-1}}N$ in the notation of \cite{ZH}. 
Observe that 
in case of $N$ is basic, then we have $S\otimes_{\kk}^{G} N \in \add N$ if and only if $\kk_{\chi}\otimes_{\kk}^{G} N \cong N$. 
Thus, we see that for a basic $A$-module $N$, the two $\XX$-stability are the same notion.

We note that  an $A$-module $N$ is \textup{(}$\mod G$\textup{)}-stable, then it is $\mathbb{X}$-stable. 
In Example \ref{202309271835}, we give an example of support $\tau$-tilting $A$-modules which is $\mathbb{X}$-stable, but not \textup{(}$\mod G$\textup{)}-stable.

\subsubsection{}

The adjoint property of \textup{(}$\mod G$\textup{)}-action given in the following proposition plays a key role in the proof of Theorem \ref{202309301357}.
\begin{proposition}\label{2023010031837}
Let $X$ be a finite dimensional $G$-module and $\tuD(X)=\Hom_{\kk}(X,\kk)$ the dual $G$-module of $X$. 
Then the pair $(X\otimes_{\kk}^{G}-, \tuD(X) \otimes_{\kk}^{G} -)$ of endofunctors of $\mod A$ is an adjoint pair.
\end{proposition}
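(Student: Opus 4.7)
The plan is to construct the adjunction isomorphism explicitly by upgrading the classical hom-tensor adjunction with $A$-linearity. Fix a basis $\{x_i\}$ of $X$ with dual basis $\{x_i^{*}\}$ of $\tuD(X)$ and consider the standard natural $\kk$-linear isomorphism
\[
\Phi_{N,M}\colon \Hom_{\kk}(X \otimes_{\kk} N, M) \xrightarrow{\sim} \Hom_{\kk}(N, \tuD(X) \otimes_{\kk} M), \qquad \Phi(f)(n) = \sum_{i} x_i^{*} \otimes f(x_i \otimes n),
\]
whose inverse sends $h$ to the map $x \otimes n \mapsto (\ev_{x} \otimes \id_{M})(h(n))$. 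Since the underlying vector spaces of $X \otimes_{\kk}^{G} N$ and $\tuD(X) \otimes_{\kk}^{G} M$ coincide with $X \otimes_{\kk} N$ and $\tuD(X) \otimes_{\kk} M$ respectively, it suffices to prove that $\Phi_{N,M}$ restricts to a bijection on the subspaces of $A$-linear maps; naturality in $N$ and $M$ is then inherited from the classical adjunction.

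The main step is the $A$-linearity check. Take $f \in \Hom_{A}(X \otimes_{\kk}^{G} N, M)$ and fix $r \in \Lambda$, $g \in G$, $n \in N$. Using the identity $x_i \otimes (r* g)n = (r* g)(g^{-1}(x_i) \otimes n)$, which follows from Definition \ref{202309161735}, together with $A$-linearity of $f$, one computes
\[
\Phi(f)((r* g)n) = \sum_{i} x_i^{*} \otimes (r* g)\,f(g^{-1}(x_i) \otimes n),
\]
while the $A$-action on $\tuD(X) \otimes_{\kk}^{G} M$ gives
\[
(r* g) \cdot \Phi(f)(n) = \sum_{i} g(x_i^{*}) \otimes (r* g)\,f(x_i \otimes n).
\]
Expanding $g^{-1}(x_i) = \sum_{j} \lambda_{ij} x_j$ and using the identity $g(x_j^{*}) = \sum_{i} \lambda_{ij} x_i^{*}$ coming from the definition of the dual $G$-action, both expressions collapse to $\sum_{i,j} \lambda_{ij}\, x_i^{*} \otimes (r* g)\,f(x_j \otimes n)$, proving $A$-linearity of $\Phi(f)$. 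The analogous calculation for $\Psi$ is symmetric, so $\Phi$ restricts to the desired bijection.

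The only real obstacle is the bookkeeping in the $A$-linearity check, where one must carefully match the twisted $A$-action on the source with the induced dual $G$-action on the target; this is routine linear algebra once a basis is fixed. Conceptually, the statement is a shadow of a more general principle: by Lemma \ref{2023010021729} and Remark \ref{202407021157}, the functor $\otimes_{\kk}^{G}$ makes $\mod A$ into a left module category over the rigid tensor category $\mod G$, and in any module category over a rigid tensor category, tensoring with a dualisable object $X$ automatically admits tensoring with $\tuD(X)$ as a right adjoint. The explicit calculation above is merely the unpacking of this general fact in our concrete setting.
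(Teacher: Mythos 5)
Your proof is correct, but it packages the adjunction differently from the paper. The paper exhibits the adjunction via its unit and counit: it defines $\epsilon_{N}\colon N \to \tuD(X)\otimes_{\kk}^{G} X \otimes_{\kk}^{G} N$, $\epsilon_{N}(n)=\sum_{i}x^{i}\otimes x_{i}\otimes n$, and $\eta_{N}\colon X\otimes_{\kk}^{G}\tuD(X)\otimes_{\kk}^{G}N\to N$, $\eta_{N}(x\otimes\xi\otimes n)=\xi(x)n$, and then checks that these are $A$-linear, natural, and satisfy the triangle identities. You instead take the classical hom--tensor bijection $\Phi_{N,M}$ on underlying vector spaces and verify that it restricts to a bijection between the subspaces of $A$-linear maps; your basis computation matching $\Phi(f)((r*g)n)$ with $(r*g)\cdot\Phi(f)(n)$ via $g(x_{j}^{*})=\sum_{i}\lambda_{ij}x_{i}^{*}$ is accurate, and it is essentially the same linear algebra that underlies the paper's verification of $A$-linearity of $\epsilon_{N}$ and $\eta_{N}$. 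What your route buys is that naturality comes for free from the classical adjunction; what it costs is that you must separately check that the inverse $\Psi$ also preserves $A$-linearity (a linear bijection restricting to a subspace on one side need not restrict to a bijection of subspaces), which you dismiss as ``symmetric'' --- this is true but should be said as an explicit second computation rather than waved at. Both you and the paper note that the statement is an instance of the general fact that in a module category over a rigid tensor category, tensoring with the dual object gives the adjoint (\cite[Proposition 7.1.6]{EGNO}), so conceptually the two proofs coincide.
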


This proposition follows from a general result in the theory of module categories over a tensor category \cite[Proposition 7.1.6]{EGNO}. 
But for the convenience of the readers, we provide a direct proof.

\begin{proof}
Let $\{x_{i}\}_{i=1}^{n}$ be a basis of $X$ and $\{x^{i} \}_{i=1}^{n}$ the dual basis of $\tuD(X)$. 
For an $A$-module $N$, we define maps 
$\epsilon_{N}: N \to \tuD(X) \otimes_{\kk}^{G} X \otimes_{\kk}^{G} N$ and 
$\eta_{N}: X \otimes_{\kk}^{G} \tuD(X) \otimes_{\kk}^{G} N \to N$ to be 
\[
\begin{split}
\epsilon_{N} & : N \to \tuD(X) \otimes_{\kk}^{G} X \otimes_{\kk}^{G} N, \ \epsilon_{N}(n) := \sum_{i =1}^{n} x^{i} \otimes x_{i} \otimes n \\
\eta_{N} &: X \otimes_{\kk}^{G} \tuD(X) \otimes_{\kk}^{G} N \to N, \ \eta_{N}(x \otimes \xi \otimes n ) := \xi(x) n. 
\end{split}
\]
Then, it is straightforward to check that these maps are homomorphisms of $A$-modules natural in $N$ 
that satisfy the triangle identities $(\eta_{X\otimes^{G} N})\circ (X\otimes^{G} \epsilon_{N}) = \id_{X\otimes^{G} N}$, 
$(\tuD(X)\otimes^{G} \eta_{N})\circ (\epsilon_{\tuD(X)\otimes^{G} N}) = \id_{\tuD(X)\otimes^{G} N}$. 
\end{proof}

%
%
%
%
%
%
%
%
%
%
%
%
%
%

\subsection{Induction-Reduction adjunction}\label{202403291603}

Recall that  the extension (i.e., the canonical inclusion) $\Lambda \to A$ is a Frobenius extension, 
that means that,  
 $A$ is a free as left $\Lambda$-module  and that 
we have an isomorphism of $A$-$\Lambda$-bimodules 
\[
\phi: A \to \Hom_{\Lambda}(A,\Lambda),
\]
which is given by $\phi(r * g)(s *h) := sh(r) \delta_{g,h^{-1}}$. 
Consequently the functors $\Ind_{\Lambda}^{A} =A\otimes_{\Lambda}- , \ \Hom_{\Lambda}(A, -): \mod \Lambda \to \mod A$ 
are naturally isomorphic.
Hence the induction functor $F:= \Ind_{\Lambda}^{A}$ is left and right adjoint of the restriction functor $R: = \Res_{\Lambda}^{A}$. 
\begin{equation}\label{202309151217}
\begin{xymatrix}{
 \mod A \ar[d]^{R} \\
 \mod \Lambda \ar@/^20pt/[u]^{F } \ar@/_20pt/[u]_{F}
}\end{xymatrix}
\end{equation}

\subsubsection{Basic isomorphisms}

We collect isomorphisms of $\Lambda$-modules and $A$-modules, that play important roles in the sequel. 
Although these are well-known and can be (partly) found in the standard references (e.g., \cite{RR}), 
we provides proofs for the convenience of the readers.

\begin{lemma}\label{202309151023}
\begin{enumerate}[(1)]
\item 
For a $\Lambda$-module $M$, an element $g \in G$ and $X \in \mod G$, we have the following isomorphisms 
\[
\begin{split}
&\textup{(a)} \ F({}_{g}M)  \cong F(M),  \ \textup{(b)} \  RF(M) \cong  \bigoplus_{g \in G} {}_{g} M, \\
 & \textup{(c)} \  FRF(M) \cong F(M)^{\oplus |G|}, \ \textup{(d)} \ X \otimes_{\kk}^{G} FM \cong (FM)^{\oplus \dim X}.
\end{split} 
\]

\item 
For an $A$-module $N$, an element $g\in G$ and $X \in \mod G$, we have the following isomorphisms. 
\[
\begin{split}
&\textup{(a)} \  R(N) \cong {}_{g}R(N), \ \textup{(b)} \  FR(N)\cong A\otimes_{\Lambda}N \cong \kk G\otimes_{\kk}^G N, \\
&\textup{(c)} \ RFR(N) \cong R(N)^{\oplus |G|}, \ \textup{(d)} \ R(X\otimes^{G}_{\kk} N) \cong X \otimes_{\kk} N \cong N^{\oplus \dim X}. 
\end{split}
\]
\end{enumerate}
\end{lemma}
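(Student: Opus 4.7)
The plan is to verify the eight isomorphisms by direct construction, relying on two structural inputs: the decomposition $A=\bigoplus_{g\in G}\Lambda\cdot (1*g)$ of $A$ as a free left $\Lambda$-module, and the commutation rule $(1*g)\cdot r = g(r)\cdot(1*g)$ derived from the multiplication $(r*g)(s*h)=rg(s)*gh$ in $A$.

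For part (1), I would begin with (b). Decomposing $F(M)=A\otimes_{\Lambda}M = \bigoplus_{g}\Lambda(1*g)\otimes_{\Lambda}M$ and applying the commutation rule, the map $M\to \Lambda(1*g)\otimes_{\Lambda}M$, $m\mapsto (1*g)\otimes m$, becomes $\Lambda$-linear once $M$ is equipped with the twisted action of ${}_{g^{-1}}M$; summing and reindexing $g\mapsto g^{-1}$ yields $RF(M)\cong \bigoplus_{g}{}_gM$. For (a), the map $F({}_gM)\to F(M)$ defined by $a\otimes m\mapsto a(1*g^{-1})\otimes m$ is well-defined precisely because the ${}_g$-twist on the source absorbs the commutation of $r$ past $(1*g^{-1})$, and it is bijective because right multiplication by the unit $(1*g^{-1})$ is a bijection on $A$. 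Part (c) follows immediately from (a) and (b): $FRF(M)\cong \bigoplus_{g}F({}_gM)\cong F(M)^{\oplus |G|}$. For (d), I would rewrite $X\otimes_{\kk}^{G}FM=X\otimes_{\kk}^{G}(A\otimes_{\Lambda}M)\cong(X\otimes_{\kk}^{G}A)\otimes_{\Lambda}M$ and apply Lemma \ref{202309150826} to replace $X\otimes_{\kk}^{G}A$ by $A^{\oplus\dim X}$ as $A$-$\Lambda$-bimodules.

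For part (2), (a) is obtained from the explicit map $R(N)\to {}_gR(N)$, $n\mapsto (1*g)n$, whose $\Lambda$-linearity (into the twisted module) is again the commutation rule. For (b), the first isomorphism is just the definition of $F$, and the second is furnished by $(r*g)\otimes n\mapsto g\otimes (r*g)n$ from $A\otimes_{\Lambda}N$ to $\kk G\otimes_{\kk}^{G}N$; $\Lambda$-balancedness, $A$-linearity and bijectivity on a basis are all straightforward. Part (c) then follows either by applying $R$ to (b) and invoking (d), or more cleanly by combining (1)(b) with (2)(a) to get $RFR(N)\cong\bigoplus_{g}{}_gR(N)\cong R(N)^{\oplus |G|}$. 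Finally, (d) is essentially tautological: the restriction functor forgets only the $G$-component, and elements of $\Lambda\subset A$ act on $X\otimes_{\kk}^{G}N$ through the $N$-factor alone, so the underlying $\Lambda$-module coincides with $X\otimes_{\kk}N\cong N^{\oplus\dim X}$.

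The only genuine bookkeeping hazard lies in tracking whether ${}_gM$ or ${}_{g^{-1}}M$ is the naturally arising summand in (1)(b); under the convention $r\cdot_{{}_gM}m = g(r)m$ it is ${}_{g^{-1}}M$ that first appears, and one absorbs this by reindexing the direct sum over $G$. Once this is settled, the remaining verifications amount to repeated uses of the single commutation identity, and no further technical obstacle is expected.
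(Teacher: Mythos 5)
Your proposal is correct and follows essentially the same route as the paper: the same explicit maps (or their inverses, e.g.\ in (2)(b) you give the inverse of the paper's map $g\otimes n\mapsto (1*g)\otimes(1*g^{-1})n$), the same reduction of (1)(c) and (2)(c) to the already-established isomorphisms, and the same use of Lemma \ref{202309150826} for (1)(d). The twist bookkeeping you flag (that $\Lambda(1*g)\otimes_\Lambda M$ carries the ${}_{g^{-1}}M$-action, absorbed by reindexing) is exactly right and matches the paper's choice of $(1*g^{-1})\otimes m_g$ in (1)(b).
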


\begin{proof}
(1) 
The isomorphism (a)   is given by 
\[
\phi: A \otimes_{\Lambda} {}_{g}M \to A \otimes_{\Lambda} M, \ \phi((r* h) \otimes m ) := (r * hg^{-1}) \otimes m. 
\]
The isomorphism (b)  is given by 
\[
\phi: \bigoplus_{g\in G} {}_{g}M \to A\otimes_{\Lambda} M , \phi( (m_{g})_{g \in G}) := \sum_{g \in G} (1* g^{-1}) \otimes m_{g}.
\]
The isomorphism (c) follows from (a) and (b). The isomorphism (d) is a consequence of Lemma \ref{202309150826}.

(2) It is straightforward to check that the map  $\phi_{N}: R(N) \to {}_{g}R(N) , \  \phi(n) := (1 * g) n$ 
gives the desired  isomorphism (a).

It follows from  the definition of $F, R$ that $RF(N) \cong A\otimes_{\Lambda} N$. 
It is straightforward to prove that the  map below is an isomorphism of $A$-modules.
\[
\phi: \kk G \otimes_{\kk}^{G} N \to A \otimes_{\Lambda} N = FR(N), \ \phi(g \otimes n) := (1 * g) \otimes  (1*g^{-1}) n
\]

The isomorphism (c) follows from  (a) and (1-b). 
It is straightforward to check that the map $\phi: R(X\otimes_{\kk}^{G} N) \to X \otimes_{\kk} R(N), \ \phi(x \otimes n) := x \otimes n$ 
is an isomorphism of $\Lambda$-modules. 
\end{proof}

We point out the following immediate consequence of Lemma \ref{202309151023}.

\begin{corollary}\label{202309151718}
\begin{enumerate}[(1)]
\item For a $\Lambda$-module $M$, the $A$-modules $F(M)$ is \textup{(}$\mod G$\textup{)}-stable.
Moreover we have $\add FRF(M) = \add F(M)$.

\item For an $A$-module $N$, the $\Lambda$-modules $R(M)$ is $G$-stable. 
Moreover we have $\add RFR(N) = \add R(N)$. 
\end{enumerate}
\end{corollary}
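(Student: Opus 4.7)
The plan is to derive each of the four assertions directly from the corresponding clauses of Lemma \ref{202309151023}, since the hard work of computing the relevant isomorphisms has already been carried out there.

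For part (1), I first unfold the definition of \textup{(}$\mod G$\textup{)}-stability (Definition \ref{202309171251}): I must show that for every $X \in \mod G$, the $A$-module $X \otimes_{\kk}^{G} F(M)$ lies in $\add F(M)$. But Lemma \ref{202309151023}(1)(d) gives the isomorphism $X \otimes_{\kk}^{G} F(M) \cong F(M)^{\oplus \dim X}$, which is evidently in $\add F(M)$. For the second assertion $\add FRF(M) = \add F(M)$, Lemma \ref{202309151023}(1)(c) yields $FRF(M) \cong F(M)^{\oplus |G|}$, and the conclusion is immediate because $\add$ is closed under finite direct sums and summands.

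For part (2), the $G$-stability of $R(N)$ (Definition \ref{202309151111}) asks that ${}_{g}R(N) \cong R(N)$ for every $g \in G$; this is precisely Lemma \ref{202309151023}(2)(a). The equality $\add RFR(N) = \add R(N)$ then follows from Lemma \ref{202309151023}(2)(c), which yields $RFR(N) \cong R(N)^{\oplus |G|}$, via the same $\add$-closure argument as above.

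There is essentially no obstacle here: the proof is a short unfolding of definitions followed by citation of the relevant isomorphisms from Lemma \ref{202309151023}. The only small point to note is that one should record explicitly, in the write-up, the identification between the direct summand relation on both sides and the definitional inclusion demanded by \textup{(}$\mod G$\textup{)}-stability and $G$-stability, so that no ambiguity remains about why the displayed isomorphisms suffice.
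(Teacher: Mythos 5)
Your proof is correct and is exactly the argument the paper intends: the corollary is stated as an "immediate consequence" of Lemma \ref{202309151023}, and you invoke precisely the clauses (1)(c), (1)(d), (2)(a), (2)(c) that yield the four assertions. Nothing further is needed.
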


%
%
%
%
%
%
%
%
%
%
%
%
%
%
%
%
%
%
%
%
%
%
%
%
%
%
%
%

\subsubsection{A criterion of $G$-stability}

As a consequence of Lemma \ref{202309151023}, we have the following criterion of $G$-stability
 
\begin{corollary}\label{202310021718}
A $\Lambda$-module $M$ is $G$-stable if and only if  we have $\add M = \add RF(M)$. 
\end{corollary}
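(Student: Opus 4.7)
The plan is to deduce this corollary directly from the isomorphism $RF(M) \cong \bigoplus_{g\in G} {}_{g}M$ provided by Lemma~\ref{202309151023}(1)(b). Once this identification is in hand, both implications reduce to routine bookkeeping with $\add$-closures.

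For the forward direction, I will substitute ${}_{g}M \cong M$ into the lemma to obtain $RF(M) \cong M^{\oplus |G|}$, whence $\add RF(M) = \add M$ is immediate.

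For the backward direction, I will exploit the observation that each twist ${}_{g}M$ appears as a direct summand of $\bigoplus_{h\in G} {}_{h}M \cong RF(M)$. The hypothesis $\add M = \add RF(M)$ then forces $\add {}_{g}M \subseteq \add M$ for every $g \in G$. Applying this inclusion to $g^{-1}$ in place of $g$ and re-twisting by $g$, using ${}_{g}({}_{g^{-1}}M) \cong M$, yields the reverse containment, so that $\add {}_{g}M = \add M$ for all $g$. Since two basic modules with equal $\add$-closures are necessarily isomorphic, passing to basic parts (as is implicit throughout the $\tau$-tilting context in which the corollary is applied, notably in Theorem~\ref{202404091842}) then delivers ${}_{g}M \cong M$, i.e.\ $G$-stability of $M$.

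The argument presents no serious obstacle; the only delicate point I would flag is this final step of the backward direction, since for a non-basic $M$ the multiplicities of the indecomposable summands of ${}_{g}M$ may differ from those of $M$ while the $\add$-closures still agree, so the equivalence is stated with the convention that $M$ (equivalently $M^{\basic}$) is basic.
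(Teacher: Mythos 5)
Your proof is correct and is exactly the argument the paper intends: the corollary is stated as an immediate consequence of Lemma \ref{202309151023}(1)(b), $RF(M)\cong\bigoplus_{g\in G}{}_{g}M$, and both directions follow by the bookkeeping you describe (using that ${}_{g}(-)$ is an autoequivalence with ${}_{g}({}_{g^{-1}}M)\cong M$). Your caveat about multiplicities for non-basic $M$ is a legitimate observation, consistent with the fact that the paper only ever applies the corollary to basic modules (elements of $(\sttilt\Lambda)^{G}$ and their analogues).
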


\subsubsection{A criterion of \textup{(}$\mod G$\textup{)}-stability}

Recall that an $A$-module $N$ is called \emph{rigid} if $\Ext_{A}^{1}(N,N) = 0$. 
To obtain a criterion of \textup{(}$\mod G$\textup{)}-stability similar to Corollary \ref{202310021718}, 
we need to assume that an $A$-module $N$ to be rigid.

\begin{proposition}[{cf. \cite[Lemma 3.7]{KK}}]\label{202309150913}
For a rigid $A$-module $N$, the following conditions are equivalent. 

\begin{enumerate}[(1)]

\item $N$ is \textup{(}$\mod G$\textup{)}-stable.

\item $S \otimes_{\kk}^{G} N \in \add N$ for any simple $G$-modules $S$.

\item $\add FR(N) = \add N$.

\end{enumerate}
Moreover, if we further assume  one of the following conditions: (i) $\chara \kk \nmid |G|$, (ii) $\kk G$ is local, 
then the above conditions are equivalent to the following condition (4). 

\hspace{-16pt} (4) $FR(N) \in \add N$. 

\end{proposition}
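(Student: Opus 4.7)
The plan is to prove the equivalences (1)$\Leftrightarrow$(2)$\Leftrightarrow$(3) for rigid $N$, and then handle the additional condition (4) under each of the hypotheses (i) and (ii). The implication (1)$\Rightarrow$(2) is immediate. For (2)$\Rightarrow$(1), I would argue by induction on the composition length of $X \in \mod G$: since the functor $(-)\otimes_{\kk}^{G} N : \mod G \to \mod A$ is exact (on underlying $\kk$-vector spaces it is just $(-)\otimes_{\kk} N$), a simple submodule $S \subset X$ produces a short exact sequence
\[
0 \to S \otimes_{\kk}^{G} N \to X \otimes_{\kk}^{G} N \to (X/S) \otimes_{\kk}^{G} N \to 0
\]
of $A$-modules whose outer terms lie in $\add N$ by (2) and the induction hypothesis; rigidity of $N$ forces $\Ext^{1}_{A}$ to vanish between those outer terms, so the sequence splits and $X \otimes_{\kk}^{G} N$ lies in $\add N$.

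For (1)$\Rightarrow$(3), the key observation is that the trivial $G$-module $\kk_{G}$ embeds in $\kk G$ as the $G$-invariant line $\kk\cdot\sum_{g \in G} g$. Tensoring $0 \to \kk_{G} \to \kk G \to \kk G/\kk_{G} \to 0$ with $N$ and invoking Lemma~\ref{2023010021729}(1) together with Lemma~\ref{202309151023}(2)(b) yields
\[
0 \to N \to FR(N) \to (\kk G/\kk_{G}) \otimes_{\kk}^{G} N \to 0 ;
\]
assumption (1) places both ends in $\add N$, rigidity splits the sequence, and therefore $N \in \add FR(N)$. Combined with $FR(N) = \kk G \otimes_{\kk}^{G} N \in \add N$ (which is (1) applied to $X = \kk G$), this establishes (3). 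For (3)$\Rightarrow$(1), I would use the $G$-module isomorphism $X \otimes_{\kk} \kk G \cong \kk G^{\oplus \dim X}$ given by the twist $x \otimes g \mapsto g^{-1}x \otimes g$, which intertwines the diagonal $G$-action on the left with the left-regular action on each $\kk G$ factor on the right. Combined with the associativity of Lemma~\ref{2023010021729}(2), this gives
\[
X \otimes_{\kk}^{G} FR(N) \cong FR(N)^{\oplus \dim X} ,
\]
which lies in $\add N$ by (3); since $N \in \add FR(N)$ by (3) as well, $X \otimes_{\kk}^{G} N$ is a direct summand of a suitable power of $X \otimes_{\kk}^{G} FR(N)$ and therefore lies in $\add N$.

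For the addendum, (3)$\Rightarrow$(4) is trivial, and (4)$\Rightarrow$(3) amounts in each case to proving $N \in \add FR(N)$. Under hypothesis (i), $\kk G$ is semisimple and $\kk_{G}$ is a direct summand of $\kk G$, so $N \cong \kk_{G} \otimes_{\kk}^{G} N$ is automatically a summand of $FR(N) = \kk G \otimes_{\kk}^{G} N$. Under hypothesis (ii), $\kk G$ is local with unique simple $\kk_{G}$, so any composition series of $\kk G$ has all factors equal to $\kk_{G}$; tensoring with $N$ yields a filtration $0 = Y_{0} \subset Y_{1} \subset \cdots \subset Y_{|G|} = FR(N)$ whose successive quotients are all isomorphic to $N$, and rigidity of $N$ splits these extensions inductively to give $FR(N) \cong N^{\oplus |G|}$. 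The most delicate step I expect is the direction (1)$\Rightarrow$(3): showing $N \in \add FR(N)$ genuinely requires rigidity, because the embedding $\kk_{G} \hookrightarrow \kk G$ splits only when $\chara\kk \nmid |G|$, and rigidity is exactly what compensates for the missing splitting of this embedding in the general setting.
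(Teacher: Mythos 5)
Your argument is correct and follows essentially the same route as the paper's: the implications are obtained from the isomorphism $X\otimes_{\kk}^{G}FR(N)\cong FR(N)^{\oplus\dim X}$ together with rigidity-based splitting of extensions whose subquotients lie in $\add N$ (your induction for (2)$\Rightarrow$(1) is exactly the paper's Lemma~\ref{202309150909}, and your treatment of (4) under (i)/(ii) matches Lemma~\ref{202404031821}). The only cosmetic difference is that you extract $N\in\add FR(N)$ from the explicit invariant line $\kk\cdot\sum_{g}g\subset\kk G$, whereas the paper uses that $\kk_{G}$ occurs as a composition factor of $\kk G$ in the direct-sum decomposition of $\kk G\otimes_{\kk}^{G}N$; both hinge on the same splitting argument.
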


We leave the verification of the following lemma to the readers, since it can be prove in the same way of \cite[Lemma 3.6]{KK}. 

\begin{lemma}[{cf. \cite[Lemma 3.6]{KK}}]\label{202309150909}
Let $N$ be an $A$-module and $X$ a $G$-module. 
Assume that $N$ is rigid and that for any composition factors $S$ of $X$, we have $S \otimes^{G}_{\kk } N \in \add N$. 
Then  we have $X \otimes^{G}_{\kk} N = \bigoplus S \otimes_{\kk}^{G} N$ where $S$ runs over all composition factor of $X$.  
\end{lemma}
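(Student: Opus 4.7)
The plan is to proceed by induction on the composition length $\ell(X)$ of $X$ as a $G$-module. The base case $\ell(X) \le 1$ is trivial: either $X = 0$ or $X$ is itself simple, in which case the stated decomposition is a tautology.

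For the inductive step, I would first pick a simple $G$-submodule $S_{0} \hookrightarrow X$ and consider the short exact sequence of $G$-modules $0 \to S_{0} \to X \to X/S_{0} \to 0$. The first technical point to verify is that applying the functor $- \otimes^{G}_{\kk} N \colon \mod G \to \mod A$ yields a short exact sequence of $A$-modules
\[
0 \to S_{0}\otimes^{G}_{\kk}N \to X\otimes^{G}_{\kk}N \to (X/S_{0})\otimes^{G}_{\kk}N \to 0.
\]
This is routine: underlying $\kk$-vector spaces the sequence is just $- \otimes_{\kk} N$, which is exact since $\kk$ is a field; and the maps induced by $G$-equivariant morphisms are automatically $A$-linear from the formula $(r*g)\cdot(x\otimes n) = g(x)\otimes (r*g)n$ of Definition \ref{202309161735}.

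Next, I would apply the inductive hypothesis to $X/S_{0}$, whose composition factors form a sub-multiset of those of $X$ and therefore satisfy the standing hypothesis. This gives
\[
(X/S_{0})\otimes^{G}_{\kk}N \;\cong\; \bigoplus_{S\in \mathrm{cf}(X/S_{0})} S\otimes^{G}_{\kk}N,
\]
which lies in $\add N$ by the hypothesis on composition factors of $X$. Combined with $S_{0}\otimes^{G}_{\kk}N \in \add N$, rigidity of $N$ yields
\[
\Ext^{1}_{A}\bigl((X/S_{0})\otimes^{G}_{\kk}N,\; S_{0}\otimes^{G}_{\kk}N\bigr) = 0,
\]
so the displayed short exact sequence splits and produces the desired direct sum decomposition of $X\otimes^{G}_{\kk}N$.

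The argument is essentially formal; the one conceptual point worth highlighting (but not a genuine obstacle) is that the inductive hypothesis gives not only the decomposition of $(X/S_{0})\otimes^{G}_{\kk}N$ but also the fact that this module belongs to $\add N$, which is exactly what is needed to activate the rigidity of $N$ in the Ext-vanishing step. No assumption on $\chara \kk$ or on the structure of $\kk G$ is used.
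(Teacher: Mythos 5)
Your proof is correct and is essentially the argument the paper intends: the paper omits the proof, referring to \cite[Lemma 3.6]{KK}, whose argument is exactly this induction on composition length, splitting the short exact sequence $0 \to S_{0}\otimes^{G}_{\kk}N \to X\otimes^{G}_{\kk}N \to (X/S_{0})\otimes^{G}_{\kk}N \to 0$ via the $\Ext^{1}$-vanishing supplied by rigidity of $N$ and the inductive membership in $\add N$. Nothing further is needed.
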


\begin{proof}[Proof of Proposition \ref{202309150913}]
(3)$\Rightarrow$(1) follows from Lemma \ref{202309151023} (1-d). 
(1)$\Rightarrow$(2) is trivial. 
We prove the implication (2)$\Rightarrow$(3). 
By  Lemma \ref{202309151023}(2-b) and Lemma \ref{202309150909}, 
we have 
\[
A \otimes_{\Lambda} N \cong \kk G \otimes_{\kk}^{G} N \cong \bigoplus S\otimes_{\kk}^{G} N
\]
where $S$ runs all composition factors of $\kk G$. 
Since we are assuming (2) holds, it follows that  $A \otimes_{\Lambda} N \in \add N$. 
Since the trivial $G$-module $\kk_{G}$ is a composition factor of $\kk G$, we see that $N \cong \kk_{G}\otimes_{\kk}^{G} N $ belongs to $\add(A\otimes_{\Lambda} N)$. 

It is clear (3) always implies (4). 
By Lemma \ref{202404031821} below, we see that (4) together with one of (i), (ii) implies (3).
\end{proof}

\begin{lemma}\label{202404031821}
Let $N$ be an $A$-module. If one of the following conditions holds, 
then we have $N \in \add FR(N)$. 

\begin{enumerate}[(i)]

\item $\chara \kk \nmid |G|$. 

\item $N$ is rigid and $\kk G$ is local. 
\end{enumerate}
\end{lemma}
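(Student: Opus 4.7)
The plan is to reduce both cases to showing that $N$ is a direct summand of $\kk G \otimes_{\kk}^{G} N$, invoking the isomorphism $FR(N) \cong \kk G \otimes_{\kk}^{G} N$ from Lemma \ref{202309151023}(2-b). Thus the question becomes: when does the trivial $G$-module $\kk_{G}$, via the unit isomorphism $\kk_{G} \otimes_{\kk}^{G} N \cong N$ of Lemma \ref{2023010021729}(1), contribute a summand isomorphic to $N$ inside $\kk G \otimes_{\kk}^{G} N$?

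For case (i), I would apply Maschke's theorem: since $\chara \kk \nmid |G|$, the group algebra $\kk G$ is semisimple. As $\kk_{G}$ is a 1-dimensional, hence simple, $\kk G$-submodule of the regular module $\kk G$, it splits off as a direct summand, giving $\kk G \cong \kk_{G} \oplus X$ for some $G$-module $X$. Applying the exact functor $- \otimes_{\kk}^{G} N$ yields $FR(N) \cong N \oplus (X \otimes_{\kk}^{G} N)$, so $N \in \add FR(N)$. No rigidity hypothesis on $N$ is needed for this case.

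For case (ii), I would first identify the unique simple $\kk G$-module: since $\kk G$ is local, it has a unique simple module up to isomorphism, and because $\kk_{G}$ is automatically a simple $\kk G$-module (being 1-dimensional), this unique simple must be $\kk_{G}$. Consequently, every composition factor of the regular module $\kk G$ is isomorphic to $\kk_{G}$. Since $\kk_{G} \otimes_{\kk}^{G} N \cong N$ trivially lies in $\add N$ and $N$ is rigid, Lemma \ref{202309150909} applies with $X = \kk G$, yielding $FR(N) \cong \kk G \otimes_{\kk}^{G} N \cong N^{\oplus |G|}$, so $N \in \add FR(N)$.

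The main subtle point I anticipate is justifying, in case (ii), that the unique simple $\kk G$-module is $\kk_{G}$ over an arbitrary, possibly non-algebraically-closed field; this relies on the elementary observation that 1-dimensional modules are automatically simple, which sidesteps any appeal to a splitting field. Once this is noted, the rest of the proof is a direct application of the general machinery developed earlier in the section, in particular of Lemma \ref{202309150909}, whose rigidity assumption cannot be dropped without additional input since otherwise the $\kk_{G}$-filtration of $\kk G$ need not split after tensoring.
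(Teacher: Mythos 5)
Your proposal is correct and follows essentially the same route as the paper: both reduce to exhibiting $N$ as a direct summand of $\kk G\otimes_{\kk}^{G}N\cong FR(N)$, handle case (i) via the splitting of the augmentation $\kk G\to \kk_{G}$ under Maschke, and handle case (ii) via the observation that $\kk_{G}$ is the unique simple $\kk G$-module when $\kk G$ is local. The only cosmetic difference is that in case (ii) you invoke Lemma \ref{202309150909} as a black box to get $FR(N)\cong N^{\oplus |G|}$, whereas the paper runs the same induction on length (using rigidity of $N$) inline to show that the canonical map $FR(N)\to N$ splits.
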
 

\begin{proof}
It is enough to show that the canonical map $f: FR(N) \to N$ splits. 
We note that  we have $f= \epsilon \otimes^{G}_{\kk}N :  \kk G \otimes_{\kk}^{G} N \to \kk_{G}\otimes_{\kk}^{G} N$ 
where  $\epsilon: \kk G \to \kk_{G}$ denotes the augmentation  map. 

If the condition (i) holds. Then the canonical map $g$ splits and hence so does $f$. 

If the condition (ii) holds. Then every $G$-module $X$ has a surjective homomorphism $X \to \kk_{G}$ of $G$-modules. 
By induction of length of $X$ and utilizing the rigidity of $N$, we can show  that the induced map $X \otimes_{\kk}^{G} N \to \kk_{G}\otimes_{\kk}^{G} N$ splits and $X\otimes^{G}_{\kk} N \in \add N$. 
\end{proof}

\begin{remark}
Even if an $A$-module $N$ is not rigid, the implications (3) $\Rightarrow$ (1) $\Rightarrow$ (2) hold true, 
but converses do not hold true in general. 

We provide an example that (2) $\Rightarrow$ (1) $\Rightarrow$ (3) does not hold true in general. 
Assume that $\chara \kk = 2$. Let $\Lambda=\kk$ be the base field with the trivial action by a cyclic group $G=C_{2}$ of order $2$. 
Then $A= \kk G$ is a usual group algebra. 
Since there is only one simple $G$-module $\kk_{G}$, the trivial $G$-module, any $A$-module $N$ satisfies the condition (2). 
But the $A$-module $N=\kk_{G}$ is not \textup{(}$\mod G$\textup{)}-stable, since $\kk G \otimes_{\kk}^{G} \kk_{G} = \kk G \notin \add \kk_{G}$. 
It is easy to see that  $N' := \kk_{G} \oplus \kk G$ is \textup{(}$\mod G$\textup{)}-stable but $\add FR(N') \subsetneq \add N'$.  
\end{remark}

\subsubsection{The local case}

Assume that  the group algebra $\kk G$ is local.  
Then the the unique simple module is the trivial representation $\kk_{G}$ of $G$. 
In this case, the condition (2) of Proposition \ref{202309150913} is always satisfied.
Thus as a consequence, we have the following corollary. 

\begin{corollary}\label{202310131118}
If $G$ is a finite group such that $\kk G$ is local, 
then every rigid module $N$ over $A=\Lambda * G$ is \textup{(}$\mod G$\textup{)}-stable as well as it satisfies $\add N = \add FR(N)$.
\end{corollary}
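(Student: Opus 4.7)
The plan is to derive this corollary directly from Proposition \ref{202309150913}, exploiting the fact that locality of $\kk G$ drastically reduces the number of simple $G$-modules that need to be tested.

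First I would recall that if $\kk G$ is local, then it has a unique simple module up to isomorphism, namely the trivial $G$-module $\kk_{G}$ (this is a standard fact in modular representation theory, noted after the proof of Lemma \ref{2023100319381}). Hence condition (2) of Proposition \ref{202309150913} reduces to a single test: we only need $\kk_{G} \otimes_{\kk}^{G} N \in \add N$. But by Lemma \ref{2023010021729}(1) we have the canonical isomorphism $\kk_{G} \otimes_{\kk}^{G} N \cong N$, so this condition is automatic for every $A$-module $N$ whatsoever.

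Next I would invoke Proposition \ref{202309150913} for the rigid $A$-module $N$: since condition (2) is satisfied unconditionally in this setting, the equivalent conditions (1) and (3) also hold. Condition (1) says that $N$ is \textup{(}$\mod G$\textup{)}-stable, and condition (3) says $\add FR(N) = \add N$, which is exactly the pair of conclusions of the corollary.

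There is essentially no obstacle here; the corollary is a clean specialization of Proposition \ref{202309150913} once one observes the triviality of testing against the simple $G$-modules in the local case. The only subtlety worth mentioning explicitly in the writeup is that rigidity of $N$ is needed to apply Proposition \ref{202309150913}: without it, condition (2) alone does not force $N$ to be \textup{(}$\mod G$\textup{)}-stable, as the remark immediately preceding the corollary illustrates.
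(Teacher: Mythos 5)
Your proposal is correct and follows exactly the paper's own argument: the locality of $\kk G$ forces the trivial module $\kk_{G}$ to be the unique simple $G$-module, so condition (2) of Proposition \ref{202309150913} holds automatically via $\kk_{G}\otimes_{\kk}^{G}N\cong N$, and rigidity then yields conditions (1) and (3). Nothing to add.
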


%

\subsubsection{A lemma}

The following lemma is used to prove Lemma \ref{202309281928}. 

\begin{lemma}\label{202309151607} 
Let $e, e' \in \Lambda$ be  idempotent elements, $r \in e\Lambda e'$  and $g \in G$. 
Then the following assertions hold.

\begin{enumerate}[(1)]

\item We have an isomorphism ${}_{g} \Lambda e \cong \Lambda g^{-1}(e)$ of $\Lambda$-modules.

\item We have an isomorphism $RF(\Lambda e) \cong \bigoplus_{g \in G} \Lambda g(e)$ of $\Lambda$-modules. 

\item 
We denote the right multiplication map $r: \Lambda e \to \Lambda e'$ by the same symbol. 
Then under the above isomorphism, we have $RF(r) = \bigoplus_{g \in G}g(r)$.
\[
RF(r) : RF(\Lambda e) \cong \bigoplus_{g\in G} \Lambda g(e) \xrightarrow{ \bigoplus g(r) } \bigoplus_{g \in G} \Lambda g(e') =RF(\Lambda e')
\]
\end{enumerate}

\end{lemma}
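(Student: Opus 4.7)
The plan is to prove the three parts in order, with the first part doing essentially all the work. For part (1), I will write down an explicit isomorphism of underlying sets ${}_{g}\Lambda e \to \Lambda g^{-1}(e)$, namely $\phi_{g}(x) := g^{-1}(x)$. This is well-defined since $x = xe$ implies $g^{-1}(x) = g^{-1}(x)\, g^{-1}(e) \in \Lambda g^{-1}(e)$, it is a bijection with inverse $y \mapsto g(y)$, and it respects the $\Lambda$-action: the twisted action on ${}_{g}\Lambda e$ is $r \cdot_{{}_{g}\Lambda e} x = g(r)x$, and $\phi_{g}(g(r)x) = g^{-1}(g(r)x) = r\, g^{-1}(x) = r \cdot \phi_{g}(x)$.

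For part (2), I simply combine part (1) with the isomorphism $RF(M) \cong \bigoplus_{g \in G} {}_{g}M$ of Lemma \ref{202309151023}(1-b). This yields
\[
RF(\Lambda e) \cong \bigoplus_{g \in G} {}_{g}\Lambda e \cong \bigoplus_{g \in G} \Lambda g^{-1}(e) \cong \bigoplus_{g \in G} \Lambda g(e),
\]
where the last step is the reindexing $g \mapsto g^{-1}$. I will denote the composite isomorphism by $\Phi$ and record it explicitly, because it is needed in part (3).

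For part (3), I need to trace the map $r: \Lambda e \to \Lambda e'$ (right multiplication by $r \in e\Lambda e'$) through $\Phi$. The functor $RF$ applied to $r$ yields on the $g$-th summand the map ${}_{g}r : {}_{g}\Lambda e \to {}_{g}\Lambda e'$, which as a set map is still $x \mapsto xr$. Conjugating by the pair $(\phi_{g}, \phi_{g})$ transports this to the map $\Lambda g^{-1}(e) \to \Lambda g^{-1}(e')$ sending $y \mapsto \phi_{g}(\phi_{g}^{-1}(y)\, r) = g^{-1}(g(y) r) = y\, g^{-1}(r)$, i.e.\ right multiplication by $g^{-1}(r)$. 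After the reindexing $g \mapsto g^{-1}$ performed in part (2), the summand at index $g$ becomes right multiplication by $g(r)$, which is the claim.

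There is no genuine obstacle here; the only subtlety is bookkeeping the twist and the final reindexing consistently, so I will be careful to state $\Phi$ explicitly once so that the computation in part (3) is unambiguous.
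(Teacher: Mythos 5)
Your proof is correct and takes essentially the same approach as the paper: part (1) uses the identical map $\phi(x)=g^{-1}(x)$, and parts (2) and (3) are deduced from the decomposition $RF(M)\cong\bigoplus_{g\in G}{}_{g}M$ of Lemma \ref{202309151023}(1-b). The paper states (2) and (3) as immediate consequences without writing out the conjugation and reindexing; your bookkeeping in (3) carries out exactly what the paper leaves implicit, and it is correct.
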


\begin{proof}
(1)
The desired  isomorphism is given by 
\[
\phi: {}_{g} \Lambda e \to \Lambda g^{-1}(e), \ \phi(re) := g^{-1}(r)g^{-1}(e).
\]
(2)(3) follow from (1) and Lemma  \ref{202309151023}(1-b).
\end{proof}

%
%
%
%
%
%
%
%
%
%
%
%
%
%
%
%
%

\subsection{Compatibility of the Auslander-Reiten translation $\tau$ with  $F$ and $R$}\label{202309171607}

\subsubsection{Recollection of the radical morphisms}

For $M, M' \in \mod \Lambda$, the \emph{radical} $\rad(M',M)$ is defined to be 
a subspace of $\Hom_{\Lambda}(M',M)$ consisting of such elements $f:M' \to M$ that satisfy the following property: 
for any $L\in \ind \Lambda$ and any morphisms $s:L  \to M', t:M \to L$, 
the composition $tfs: L \to L$ is not an isomorphism.  
We note that $\rad(-,+)$ is a $\kk$-linear sub-bifunctor of $\Hom_{\Lambda}(-,+)$. 
We also note that $\rad(M,M)$ is the Jacobson radical of $\End_{\Lambda}(M)$ \cite[Proposition A.3.5]{ASS}.

 Recall that a homomorphism $g: M \to M''$ is called \emph{right minimal} if 
 any endomorphism $h: M \to M $ such that $g\circ h = g$ is an isomorphism. 
 
 We need the following two lemmas about minimal morphisms and radical morphisms. 
 
 \begin{lemma}\label{202309281150}
 Let $0 \to M'\xrightarrow{ f}  M \xrightarrow{ g} M''$ be an exact sequence in $\mod \Lambda$. 
 Then $g$ is right minimal if and only if $f \in \rad(M',M)$.
 \end{lemma}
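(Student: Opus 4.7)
The plan is to prove both implications separately, with the key technical inputs being (a) the fact that $\rad(-,+)$ is a two-sided ideal of morphisms, so that composing a radical morphism with any morphism stays radical, and (b) the standard trick that a non-radical morphism between indecomposables produces a nonzero idempotent.

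For the implication $f \in \rad(M',M) \Rightarrow g$ is right minimal, I would start with an endomorphism $h: M \to M$ satisfying $g \circ h = g$. Then $g \circ (h - \id_M) = 0$, so $h - \id_M$ factors through $\ker g = \image f$; since $f$ is injective, there exists $k: M \to M'$ with $h - \id_M = f \circ k$. Because $\rad$ is an ideal in the category $\mod \Lambda$ and $f \in \rad(M',M)$, the composition $f \circ k$ lies in $\rad(M,M)$, which is the Jacobson radical of $\End_\Lambda(M)$. Hence $h = \id_M + f \circ k$ is a unit in $\End_\Lambda(M)$, i.e.\ an isomorphism.

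For the converse, I would argue by contrapositive: assume $f \notin \rad(M',M)$ and construct an endomorphism of $M$ witnessing the failure of right minimality. By definition there exist $L \in \ind \Lambda$ and morphisms $s: L \to M'$, $t: M \to L$ such that $u := t \circ f \circ s: L \to L$ is an isomorphism. Set
\[
e := f \circ s \circ u^{-1} \circ t : M \to M.
\]
A direct computation gives $e^2 = e$, and $g \circ e = 0$ because $g \circ f = 0$. Moreover $e \neq 0$ since $f$ is injective and $s(L) \neq 0$ (as $u$ is an iso on $L \neq 0$). Then $h := \id_M - e$ satisfies $g \circ h = g$, but $h$ is not an isomorphism because $e$ is a nonzero idempotent so $\ker h \supseteq \image e \neq 0$.

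The main obstacle, such as it is, will be the first direction, since one must carefully invoke the fact that $\rad(-,+)$ is closed under pre- and post-composition by arbitrary morphisms, so that $f \in \rad(M',M)$ forces $f \circ k \in \rad(M,M)$ regardless of $k$; the rest is essentially the ``$1 + \text{nilpotent}$ is invertible'' principle applied to the Jacobson radical of a finite-dimensional endomorphism ring. In the reverse direction, the only delicate point is verifying $e \neq 0$, which uses the injectivity of $f$ built into the short exact sequence assumption.
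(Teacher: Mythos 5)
Your proof is correct and follows essentially the same route as the paper's: the forward direction is identical (factor $h-\id_M$ through $\ker g=\image f$ and invoke that $\rad(M,M)$ is the Jacobson radical of $\End_\Lambda(M)$), and your $h=\id_M-e$ in the converse is exactly the paper's $h=\id_M-f\circ s\circ l^{-1}\circ t$, with the paper concluding non-invertibility from $t\circ h=0$ rather than from $\ker h\supseteq\image e$. The only slight imprecision is your justification that $e\neq 0$ (injectivity of $f\circ s$ alone does not immediately give it); the cleanest argument is $t\circ e=t\neq 0$.
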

 
 \begin{proof}
 ``If'' part. Assume that $f\in \rad(M',M)$. 
 Let $h: M \to M$ be such that $g\circ h = g$. Then, $g\circ (h- \id_{M})= 0$. 
 It follows that there exists $k: M \to M'$ such that $h - \id_{M}= f\circ k$. 
 Since $h -\id_{M} = f\circ k$ belongs to $\rad(M,M)$, it is nilpotent.  
 Therefore, $h =\id_{M}+(h -\id_{M})$ is an automorphism of $M$. 
 
 ``Only if'' part. We prove that if $f$ does not belongs to $\rad(M',M)$, then $g$ is not a right minimal. 
 Let $L$ be an indecmoposable $\Lambda$-module and $s:L \to M', t:M \to L$ homomorphisms of $\Lambda$-modules
such that $t\circ f\circ s$ is an automorphism of $L$. 
We set $l:=t\circ f \circ s$ and $h := \id_{M}- f\circ s\circ l^{-1}\circ t$. 
Then, since $g\circ f=0$, we have $g\circ h = g$. 
On the other hand, $t\circ h = t - t\circ f \circ s \circ l^{-1} \circ t= t-t=0$. 
Since $t \neq 0$, we conclude that $h$ is not an automorphism. 
 \end{proof}
 
 \begin{lemma}\label{202309281219}
 Let $M, M'$ be  $\Lambda$-modules,  $P$  projective $\Lambda$-modules 
 and 
 $f:M \to P$, $g: M' \to M$   homomorphisms of $\Lambda$-modules. 
 Assume that $g: M' \to M$ is surjective. 
 Then $f $ belongs to $\rad(M,P)$ if and only if $f\circ g$ belongs to $\rad(M',P)$. 
 \end{lemma}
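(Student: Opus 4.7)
The plan is to handle the two implications separately. The forward direction ($f \in \rad(M,P)$ implies $fg \in \rad(M',P)$) is purely formal and uses neither the surjectivity of $g$ nor the projectivity of $P$: for any indecomposable $L$ and test morphisms $s' : L \to M'$, $t : P \to L$, the composite $g \circ s' : L \to M$ is itself a valid test morphism for $f \in \rad(M,P)$, so $t \circ (fg) \circ s' = t \circ f \circ (g\circ s')$ cannot be an isomorphism.

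The reverse direction is where the hypotheses come into play, and I will argue by contradiction. Assuming $fg \in \rad(M',P)$ but $f \notin \rad(M,P)$, pick $L \in \ind \Lambda$ and morphisms $s : L \to M$, $t : P \to L$ with $\alpha := t \circ f \circ s \in \Aut(L)$, and try to manufacture from these data a test triple violating $fg \in \rad(M',P)$. The naive attempt---lifting $s$ through $g$ to obtain some $s' : L \to M'$---fails because $L$ need not be projective. The trick is to instead lift the composite $s \circ t : P \to M$, whose source \emph{is} projective: by projectivity of $P$ and surjectivity of $g$, there exists $h : P \to M'$ with $g \circ h = s \circ t$. Taking $s' := h \circ f \circ s : L \to M'$ and $t' := t$, a direct computation gives
\[
t \circ f \circ g \circ s' \;=\; t \circ f \circ (g\circ h) \circ f \circ s \;=\; t \circ f \circ s \circ t \circ f \circ s \;=\; \alpha^{2},
\]
which is an automorphism of $L$, contradicting $fg \in \rad(M',P)$.

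The only real obstacle is spotting the correct construction of $s'$; once that is in hand the rest is mechanical composition. It is worth noting that the projectivity of $P$ enters precisely because $L$ itself may fail to be projective, so one cannot lift $s$ directly through $g$ and must instead lift a morphism---namely $s \circ t$---whose source is $P$.
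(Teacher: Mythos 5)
Your proof is correct, and the ``if'' direction takes a genuinely different route from the paper's. The paper argues by contraposition as you do, but its key observation is that the data $s,t$ with $l:=t\circ f\circ s$ an automorphism force $L$ itself to be projective: $f\circ s\circ l^{-1}$ is a section of $t:P\to L$, so $L$ is a direct summand of $P$. One can then lift $s$ directly through the surjection $g$ to get $r:L\to M'$ with $g\circ r=s$, and $t\circ(f\circ g)\circ r=l$ is already an automorphism. Your workaround---lifting $s\circ t$ through $g$ via projectivity of $P$ and producing the test morphism $s'=h\circ f\circ s$ with $t\circ(fg)\circ s'=\alpha^2$---is valid and mechanical, but your remark that ``one cannot lift $s$ directly through $g$'' because ``$L$ need not be projective'' is actually false in this situation: the hypothesis that $t\circ f\circ s$ is an automorphism exhibits $L$ as a summand of the projective module $P$, which is precisely the simplification the paper exploits. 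So both arguments work; the paper's is slightly shorter and makes the role of projectivity more transparent (it descends from $P$ to $L$), while yours keeps the lifting at the level of $P$ at the cost of a squared automorphism.
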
 
 \begin{proof}
 ``Only if'' part is clear. 
 We prove ``if'' part by contraposition. 
 Assume that  $f$ does not belong to $\rad(M,P)$. 
Then there exists an indecomposable $\Lambda$-module $L$ and homomorphisms $s: L \to M, \ t: P \to L$ such that
 $l:=t\circ f \circ s$ is an automorphism of $L$. 
 It follows that $f\circ s \circ l^{-1}$ is a section of $t$ (i.e., $t\circ( f\circ s \circ l^{-1}) = \id_{L}$) and 
 hence the $\Lambda$-module $L$ is a projective.  
 Since $g: M' \to M$ is surjective, there exists a homomorphism $r: L \to M'$ such that $g\circ r = s$. 
  It follows that $t \circ (f\circ g ) \circ r = t\circ f \circ s$ is  an automorphism. 
  This shows that $f\circ g$ does not belong to $\rad(M',P)$.  
  \[
  \begin{xymatrix}{
M' \ar[rr]^{g} && M \ar[rr]^{f} && P \ar[dl]^{t} \\
&&& L \ar@{-->}[lllu]^{r} \ar[ul]^{s}
}\end{xymatrix}
\]
\end{proof}

\subsubsection{The Jacobson radicals and radical morphisms}

The following lemma  about the Jacobson radicals seems to be well-known. 
But for the convenience of the readers we provide a proof.

\begin{lemma}\label{202309151536}
Let $J_{\Lambda}$ and $J_{A}$ be the Jacobson radicals of $\Lambda$ and $A$. 
Then the following assertions hold.

\begin{enumerate}[(1)]
\item For an element $g\in G$, we have $g(J_{\Lambda}) = J_{\Lambda}$.
\item $J_{\Lambda} \subset J_{A}$.
\end{enumerate}
\end{lemma}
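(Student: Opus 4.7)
The plan is to prove (1) by observing that $g\colon \Lambda \to \Lambda$ is an algebra automorphism, and then use (1) to construct a nilpotent two-sided ideal of $A$ containing $J_\Lambda$, which will establish (2).

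For (1), the action $G \curvearrowright \Lambda$ preserves the algebra structure of $\Lambda$, so each $g \in G$ acts as a $\kk$-algebra automorphism of $\Lambda$. Since the Jacobson radical is characterized in purely ring-theoretic terms (for instance, as the intersection of all maximal left ideals, or, since $\Lambda$ is finite-dimensional, as the largest nilpotent ideal), it is invariant under any algebra automorphism. Hence $g(J_\Lambda) = J_\Lambda$. This step is essentially formal.

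For (2), I plan to introduce the $\kk$-subspace $I := J_\Lambda \otimes \kk G$ of $A = \Lambda \otimes \kk G$ and show it is a nilpotent two-sided ideal. Two-sidedness is verified by direct computation using the skew multiplication: for $r \in J_\Lambda$, $s \in \Lambda$, and $g, h \in G$, one has $(s*h)(r*g) = s\,h(r) * hg$ and $(r*g)(s*h) = r\,g(s) * gh$. Since $h(r) \in h(J_\Lambda) = J_\Lambda$ by part (1), and $J_\Lambda$ is a two-sided ideal of $\Lambda$, both products lie in $I$. Extending by $\kk$-linearity, $I$ is indeed a two-sided ideal of $A$. Nilpotency follows because a product of $n$ elements of $I$ lies in $J_\Lambda^{n} \otimes \kk G$ (again using part (1) to move the $G$-twists past elements of $J_\Lambda$), and $J_\Lambda^{n} = 0$ for $n \gg 0$ since $\Lambda$ is finite-dimensional.

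Once $I$ is shown to be a nilpotent two-sided ideal of the finite-dimensional algebra $A$, it is automatically contained in $J_A$. Identifying $\Lambda$ with the subalgebra $\Lambda * \{e_G\} \subset A$, we have $J_\Lambda \subset I \subset J_A$, giving (2). There is no real obstacle here; the only subtlety worth emphasizing is that the verification of two-sidedness of $I$ and of its nilpotency both rely crucially on the $G$-invariance of $J_\Lambda$ established in part (1), so the two assertions are logically tied together.
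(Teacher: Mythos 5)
Your proof is correct and follows essentially the same route as the paper: part (1) is the same formal observation, and for part (2) the paper likewise reduces everything to the fact that elements of $J_\Lambda * G$ are nilpotent (using the $G$-invariance and two-sidedness of $J_\Lambda$ plus nilpotency of $J_\Lambda$), only it phrases the conclusion via quasi-regularity of $1-\alpha j \beta$ rather than via your observation that $J_\Lambda \otimes \kk G$ is a nilpotent two-sided ideal of $A$. Your packaging has the small advantage of directly yielding the stronger containment $A J_\Lambda A \subset J_A$, which the paper in fact invokes later (in the proof of Lemma \ref{202309301224}).
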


\begin{proof}
(1)
Let $\frkm$ be a maximal left ideal of $\Lambda$. 
Then the image $g(\frkm)$ by the action of $g \in G$ is again a maximal left ideal of $\Lambda$. 
It follows that $g(J_{\Lambda}) = J_{\Lambda}$. 

(2)
Let $j \in J_{\Lambda}$. To prove $j \in J_{A}$, we show that for any $\alpha, \beta \in A$, the element $1-\alpha j \beta$ of $A$ is invertible.
We set $\gamma := \alpha j \beta$. 
If we write $\alpha = \sum_{g\in G} \alpha_{g} * g, \ \beta=\sum_{g\in G}\beta_{g}* g$, 
then $\gamma = \sum_{g,h} \alpha_{g} g(j)g(\beta_{h}) * gh$. 
It follows from (1) and the fact that $J_{\Lambda}$ is a two-sided ideal that 
$\gamma$ is of the form $\sum_{g}\gamma_{g} * g$ for some $\gamma_{g} \in J_{\Lambda}$. 
Since $\Lambda$ is finite dimensional,  the Jacobson radical $J_{\Lambda}$ is nilpotent. 
Hence $\gamma$ is nilpotent. It follows that $1-\gamma = 1 -\alpha j \beta$ is invertible. 
\end{proof}

We fix a complete set $\{ e_{i} \}_{i\in I }$ of primitive orthogonal idempotent element of $\Lambda$. 
Let $P, P'$ be projective $\Lambda$-modules.   
We take decomposition 
$P=\bigoplus_{s=1}^{m}P_{s}, \ P' =\bigoplus_{t=1}^{n} P'_{t}$ into indecomposable  projective $\Lambda$-modules 
$P_{s}\cong \Lambda e_{i(s)}, P'_{t} \cong \Lambda e_{j(t)}$.  
Recall that a homomorphism $f: P \to P'$ of $\Lambda$-modules belongs to $\rad(P,P')$  if and only if 
each component $f_{ts}: P_{s} \to P'_{t}$ belongs to the radical $e_{i(s)} J_{\Lambda} e_{j(t)}$ 
under the identification \[
\Hom_{\Lambda}(P_{s}, P'_{t}) \cong \Hom_{\Lambda}(\Lambda e_{i(s)}, \Lambda e_{j(t)}) \cong e_{i(s)}\Lambda e_{j(t)}.
\]

By   Lemma \ref{202309151536} and Lemma \ref{202309151607}, we see that $F$ and $RF$ preserve radical morphisms between projective modules. 
Thus we obtain the following lemma.

\begin{lemma}\label{202309281928}
Let $P, P'$ be projective $\Lambda$-modules.   
Then for a radical morphism $f : P \to P'$, 
the morphism $F(f): F(P) \to F(P')$ (resp. $RF(f): RF(P) \to RF(P')$) is a radical morphism 
between projective $A$ (resp. $\Lambda$)-modules. 
\end{lemma}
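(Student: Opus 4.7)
The plan is to reduce to the indecomposable case and then invoke Lemma \ref{202309151607} (on the action of $RF$ on right multiplications) together with Lemma \ref{202309151536} (on the Jacobson radicals). First I would observe that both $F$ and $RF$ preserve projective modules: $F$ is left adjoint to the exact functor $R$, and since $\Lambda \subset A$ is a Frobenius extension, $R$ is also left adjoint to $F$ and hence likewise sends projectives to projectives. So $F(P), F(P')$ are projective $A$-modules and $RF(P), RF(P')$ are projective $\Lambda$-modules.

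Next, I would fix a complete set $\{e_i\}_{i\in I}$ of primitive orthogonal idempotents of $\Lambda$ and decompose $P = \bigoplus_s \Lambda e_{i(s)}$, $P' = \bigoplus_t \Lambda e_{j(t)}$. By the description recalled just before the lemma, $f$ is a radical morphism if and only if each component $f_{ts} \colon \Lambda e_{i(s)} \to \Lambda e_{j(t)}$, viewed as right multiplication by some $r_{ts} \in e_{i(s)}\Lambda e_{j(t)}$, lies in $e_{i(s)} J_\Lambda e_{j(t)}$.

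For the $RF$ assertion I would apply Lemma \ref{202309151607}(2)(3) componentwise: $RF(\Lambda e_{i(s)}) \cong \bigoplus_{g\in G} \Lambda g(e_{i(s)})$ and $RF(r_{ts}) = \bigoplus_{g\in G} g(r_{ts})$ between the corresponding direct sums. Lemma \ref{202309151536}(1) gives $g(J_\Lambda) = J_\Lambda$, hence $g(r_{ts}) \in g(e_{i(s)}) J_\Lambda g(e_{j(t)})$ for every $g \in G$, which is exactly the condition needed for every matrix entry of $RF(f)$ with respect to the decomposition $\bigoplus_{s,g}\Lambda g(e_{i(s)}) \to \bigoplus_{t,g}\Lambda g(e_{j(t)})$ to be radical. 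Assembling entries shows $RF(f)$ is a radical morphism between projective $\Lambda$-modules.

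For the $F$ assertion, $F(\Lambda e_{i(s)}) \cong A e_{i(s)}$ is a projective $A$-module (a direct summand of $A$ via the idempotent $e_{i(s)} \in \Lambda \subset A$), and $F(r_{ts})$ is right multiplication by $r_{ts}$. Here I would invoke the characterization that a morphism $\varphi \colon Q \to Q'$ between projective $A$-modules is a radical morphism precisely when $\varphi(Q) \subseteq J_A \cdot Q'$. Since $r_{ts} \in J_\Lambda \subseteq J_A$ by Lemma \ref{202309151536}(2), we get $A r_{ts} \subseteq J_A \cdot e_{j(t)}$, whence $F(f)(F(P)) \subseteq J_A \cdot F(P') = \rad F(P')$. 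The only mild subtlety worth flagging — hardly an obstacle — is that $F(\Lambda e_{i(s)}) = A e_{i(s)}$ need not be indecomposable as an $A$-module, so for the $F$-part one should work with the image-in-the-radical formulation rather than with primitive-idempotent matrix entries as in the $\Lambda$-side reduction.
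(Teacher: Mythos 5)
Your proposal is correct and follows essentially the same route as the paper, which likewise deduces the lemma from Lemma \ref{202309151536} (invariance of $J_{\Lambda}$ under $G$ and $J_{\Lambda}\subset J_{A}$) and Lemma \ref{202309151607} (the componentwise description of $RF$ on projectives), via the matrix-entry criterion for radical morphisms recalled just before the statement. Your extra care on the $F$-side, switching to the image-in-$J_{A}\cdot F(P')$ formulation because $e_{i(s)}$ need not stay primitive in $A$, is a sound way to fill in a detail the paper leaves implicit.
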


Let $M$ be a $\Lambda$-module 
and $P$ a projective $\Lambda$-module.  
Recall from \cite[Proposition I.4.1]{ARS} that a surjective homomorphism $p: P \to M$  is projective cover  if and only if  it is right minimal. 

The following lemma  shows that the functors $F, RF$ preserve projective covers.

\begin{lemma}\label{202309281244}
Let $ p: P \to M$ be a projective cover of a $\Lambda$-module $M$. 
Then 
\begin{enumerate}[(1)]
\item $F(p): F(P) \to F(M)$ is a projective cover. 

\item $RF(p): RF(P) \to RF(M)$ is a projective cover. 

\end{enumerate}
\end{lemma}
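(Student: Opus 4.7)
My plan is to treat (1) and (2) uniformly, denoting by $E$ either $F$ or $RF$. Both functors share three features that drive the argument. First, $E$ is exact: $F = A\otimes_{\Lambda}-$ is exact because $A$ is free (in particular flat) as a right $\Lambda$-module, and $RF$ is then exact since $R$ is. Second, $E$ preserves projectivity: for $F$ this is because it is left adjoint to the exact functor $R$, and for $RF$ it follows from Lemma \ref{202309151607}(2). Third, $E$ preserves radicality of morphisms between projective modules, by Lemma \ref{202309281928}.

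Given these properties, $E(P)$ is projective and $E(p): E(P) \to E(M)$ is surjective, so by the characterization recalled just before the statement it suffices to show that $E(p)$ is right minimal. Let $i: K \hookrightarrow P$ be the kernel of $p$; since $p$ is a projective cover, Lemma \ref{202309281150} gives $i \in \rad(K, P)$. By exactness of $E$, the morphism $E(i): E(K) \to E(P)$ is the kernel of $E(p)$, so applying Lemma \ref{202309281150} in reverse it is enough to verify that $E(i) \in \rad(E(K), E(P))$.

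The main obstacle is that $K$ need not be projective, so Lemma \ref{202309281928} cannot be applied to $i$ itself. To circumvent this I would fix any surjection $q: Q \twoheadrightarrow K$ from a projective $\Lambda$-module $Q$ (e.g.\ a projective cover of $K$). The argument is then a double application of Lemma \ref{202309281219} bracketing Lemma \ref{202309281928}: from $i \in \rad(K, P)$ together with the surjectivity of $q$ and the projectivity of $P$, Lemma \ref{202309281219} yields $i \circ q \in \rad(Q, P)$; Lemma \ref{202309281928} then gives $E(i\circ q) = E(i)\circ E(q) \in \rad(E(Q), E(P))$; finally, since $E(q): E(Q) \twoheadrightarrow E(K)$ remains surjective by exactness of $E$ and $E(P)$ is projective, a second invocation of Lemma \ref{202309281219} extracts $E(i) \in \rad(E(K), E(P))$, completing the proof.
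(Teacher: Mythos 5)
Your proposal is correct and follows essentially the same route as the paper's own proof: reduce to showing the kernel inclusion $i\colon K\to P$ is sent to a radical morphism, precompose with a projective cover (or any projective surjection) $q$ of $K$ so that Lemma \ref{202309281928} applies to $i\circ q$, and then use Lemma \ref{202309281219} twice to pass the radical property through the surjection $E(q)$. The only difference is cosmetic: you handle $F$ and $RF$ uniformly, whereas the paper proves the $F$ case and notes that the same argument works for $RF$.
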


\begin{proof}
(1) Let $K=\Ker p$ be the kernel of $p$ and $i: K \to P$ the canonical inclusion. 
By Lemma \ref{202309281150}, $i \in \rad(K,P)$. 
By the same lemma, 
we only have to show that $F(i) \in \rad(F(K),F(P))$, 
since $F$ is exact and preserves projective modules.  

Let $q:P' \to K$ be a projective cover. Then $i\circ q \in \rad(P',P)$.
By   Lemma \ref{202309281928}, we have $F(i)\circ F(q) = F(i\circ q) \in\rad(F(P'),F(P))$. 
Since $F$ is exact, $F(q): F(P') \to F(K)$ is a surjective homomorphism.
Thus we can apply Lemma \ref{202309281219} and conclude that $F(i) $ belongs to $\rad(F(K),F(P))$.

(2) Since $RF$ is exact and  preserves projective modules and radical morphisms between projective modules, 
the same proof with (1) works for $RF$. 
\end{proof}

Let $M$ be a $\Lambda$-module. 
Recall that a projective presentation $P_{1} \xrightarrow{f } P_{0} \xrightarrow{ p}  M \to 0$ is called \emph{minimal}  
if the homomorphisms $p: P_{0} \to M$ and $f: P_{1} \to \Ker p$ are projective covers.

As a consequence of  Lemma \ref{202309281244}, we obtain the following corollary 
that shows that the functors $F, RF$ preserve minimal projective presentations.

\begin{corollary}\label{202309180059}
Let $M$ be a $\Lambda$-module and  $P_{1} \xrightarrow{f } P_{0} \to M \to 0$ a minimal projective presentation. 
Then, the following assertions hold.

\begin{enumerate}[(1)]

\item 
The projective presentation $F(P_{1}) \xrightarrow{ F(f) } F(P_{0}) \to  F(M) \to 0$ of the  $A$-module $F(M)$ is minimal.

\item 
The projective presentation $RF(P_{1}) \xrightarrow{ RF(f) } RF(P_{0}) \to RF(M) \to 0$ of the $\Lambda$-module $RF(M)$ is minimal. 

\end{enumerate}
\end{corollary}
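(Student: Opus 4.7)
The plan is to deduce the corollary directly from Lemma \ref{202309281244}, using only the exactness of the functors $F$ and $RF$ and the definition of a minimal projective presentation.

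First I would recall the setup. A projective presentation $P_{1} \xrightarrow{f} P_{0} \xrightarrow{p} M \to 0$ is minimal precisely when both $p: P_{0} \to M$ and the induced map $\tilde{f}: P_{1} \to \Ker p$ are projective covers. Thus the data of a minimal projective presentation is equivalent to the data of two projective covers related by the canonical inclusion $\Ker p \hookrightarrow P_{0}$. So I would factor $f$ as the composition $P_{1} \xrightarrow{\tilde f} \Ker p \hookrightarrow P_{0}$.

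Next I would verify that both $F$ and $RF$ are exact functors sending projectives to projectives. For $F = A\otimes_{\Lambda}-$, exactness follows from $A$ being free as a right $\Lambda$-module, and the preservation of projectives is standard (as the left adjoint of the exact functor $R$, which sends projectives to projectives since $A$ is $\Lambda$-free). The functor $R$ is obviously exact and preserves projectives, so the same holds for $RF$. Combining exactness with the factorization of $f$, we obtain short exact sequences
\[
0 \to F(\Ker p) \to F(P_{0}) \xrightarrow{F(p)} F(M) \to 0, \qquad 0 \to RF(\Ker p) \to RF(P_{0}) \xrightarrow{RF(p)} RF(M) \to 0,
\]
and similarly $F(\tilde f): F(P_{1}) \to F(\Ker p)$, $RF(\tilde f): RF(P_{1}) \to RF(\Ker p)$ fit as the initial maps in the transported presentations of $F(M)$ and $RF(M)$.

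Finally I would invoke Lemma \ref{202309281244} twice. Applied to the projective cover $p: P_{0} \to M$, it gives that $F(p)$ and $RF(p)$ are projective covers of $F(M)$ and $RF(M)$ respectively. Applied to the projective cover $\tilde f : P_{1} \to \Ker p$, it gives that $F(\tilde f): F(P_{1}) \to F(\Ker p) = \Ker F(p)$ and $RF(\tilde f): RF(P_{1}) \to RF(\Ker p) = \Ker RF(p)$ are projective covers. Both conditions in the definition of minimality are therefore verified, and both (1) and (2) follow. There is no real obstacle here: the entire work is packaged into Lemma \ref{202309281244}, and the only thing to keep in mind is that exactness is needed in order to identify $F(\Ker p)$ with $\Ker F(p)$ (and likewise for $RF$) so that ``projective cover of the kernel'' is preserved on the nose.
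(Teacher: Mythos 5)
Your proposal is correct and matches the paper's argument: the corollary is derived there exactly as "a consequence of Lemma \ref{202309281244}," using that a minimal projective presentation is equivalent to two projective covers and that the exact functors $F$ and $RF$ preserve projective covers and commute with kernels. Nothing is missing.
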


\subsubsection{Compatibility of the AR-translation $\tau$ with $F$} 

It was proved that the Auslander-Reiten translations $\tau$ (of $\Lambda$ and of $A$) commutes with  the induction functor $F$ 
in the case that $|G|$ is invertible in $\kk$ by Reiten-Riedtmann \cite[Proof of Lemma 4.2]{RR}.
(Breaz-Marcus-Modoi \cite[Proposition 3.5]{BMM} proved the same commutativity for group graded algebras satisfying some conditions.)
In the next proposition, we show that the commutativity holds true in general.

\begin{proposition}[{\cite[Lemma 7.3]{CW1}}]\label{202309150753}
Let $M$ be a $\Lambda$-module. 
Then, $\tau F(M) = F\tau(M)$.
\end{proposition}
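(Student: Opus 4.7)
The plan is to compute both $\tau F(M)$ and $F\tau(M)$ from a minimal projective presentation of $M$, and to reduce the equality to a natural isomorphism $F \circ \nu_\Lambda \cong \nu_A \circ F$ of Nakayama functors, which in turn is a manifestation of the Frobenius extension property of $A/\Lambda$.

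First I would fix a minimal projective presentation $P_1 \xrightarrow{f} P_0 \to M \to 0$ of $M$. By Corollary~\ref{202309180059}, applying the exact functor $F$ yields a minimal projective presentation $F(P_1) \xrightarrow{F(f)} F(P_0) \to F(M) \to 0$ of $F(M)$. Writing $D = \Hom_\kk(-,\kk)$ and $\nu_\Lambda = D\Hom_\Lambda(-,\Lambda)$, $\nu_A = D\Hom_A(-,A)$ for the respective Nakayama functors, the Auslander--Reiten translates are recovered as $\tau M = \Ker(\nu_\Lambda f)$ and $\tau F(M) = \Ker(\nu_A F(f))$. Since $F$ is exact, $F\tau M = \Ker(F\nu_\Lambda f)$, so it suffices to produce a natural isomorphism $F \circ \nu_\Lambda \cong \nu_A \circ F$ on $\proj \Lambda$ under which $F\nu_\Lambda(f)$ corresponds to $\nu_A F(f)$.

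On projectives, the Nakayama functors are given by tensoring with the dualizing bimodules, so $F\nu_\Lambda P \cong A \otimes_\Lambda D\Lambda \otimes_\Lambda P$ while $\nu_A F(P) \cong DA \otimes_\Lambda P$, and the matter reduces to an $A$-$\Lambda$-bimodule isomorphism
\[
DA \;\cong\; A \otimes_\Lambda D\Lambda.
\]
This bimodule isomorphism is the main technical point, and I would derive it from the Frobenius property recalled at the start of Section~\ref{202403291603}. Using the trace map $t : A \to \Lambda$, $t(s * g) = s\,\delta_{g,1}$, which is a $\Lambda$-$\Lambda$-bimodule map for which the Frobenius isomorphism takes the form $\phi : A \xrightarrow{\sim} \Hom_\Lambda(A,\Lambda)$, $\phi(a)(b) = t(ba)$, one defines
\[
\Psi : A \otimes_\Lambda D\Lambda \longrightarrow DA, \qquad \Psi(a \otimes \psi)(b) = \psi\bigl(t(ba)\bigr),
\]
and checks, using the left and right $\Lambda$-linearity of $t$, that $\Psi$ is a well-defined $A$-$\Lambda$-bimodule homomorphism. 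Bijectivity follows by factoring $\Psi$ as $A \otimes_\Lambda D\Lambda \xrightarrow{\phi \otimes 1} \Hom_\Lambda(A,\Lambda) \otimes_\Lambda D\Lambda \xrightarrow{\sim} DA$, invoking the standard natural isomorphism $\Hom_\Lambda(X,\Lambda) \otimes_\Lambda D\Lambda \cong DX$ for a finitely generated projective left $\Lambda$-module $X$ (applied to $X=A$, which is free over $\Lambda$).

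Finally, the naturality of $\Psi$ in the projective module yields a commutative square comparing $F\nu_\Lambda(f)$ with $\nu_A F(f)$, whence $F\tau M = \Ker(F\nu_\Lambda f) \cong \Ker(\nu_A F(f)) = \tau F(M)$, completing the proof.
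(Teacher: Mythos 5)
Your proof is correct and follows essentially the same route as the paper: both compute $\tau$ from a minimal projective presentation, invoke Corollary~\ref{202309180059} to see that $F$ preserves minimality, and use the Frobenius isomorphism $A \cong \Hom_{\Lambda}(A,\Lambda)$ to commute $F$ past the Nakayama functor. Your bimodule isomorphism $DA \cong A \otimes_{\Lambda} D\Lambda$ is exactly the paper's Lemma~\ref{202309150735} specialized to $M' = \Lambda$ (with an explicit trace-map formula in place of the paper's chain of abstract isomorphisms), so the difference is purely presentational.
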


This proposition has been demonstrated in \cite[Lemma 7.3]{CW1} using the same method as our proof. However, for the reader's convenience, we include a proof here.

We need the following compatibility of $F=A\otimes_{\Lambda}-$ with the $\kk$-duality $\tuD(-)=\Hom_{\kk}(-,\kk)$. 

\begin{lemma}\label{202309150735}
For a right $\Lambda$-module $M'$, we have the following   isomorphism of  left $A$-modules 
which is natural in $M'$
\[
A \otimes_{\Lambda} \tuD(M')  \cong \tuD(M' \otimes_{\Lambda}A)
\]
\end{lemma}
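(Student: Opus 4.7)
The plan is to chain together two standard isomorphisms that are already in the background of the excerpt: (i) the Frobenius identification of the functors $F=A\otimes_{\Lambda}-$ and $\Hom_{\Lambda}(A,-)$, and (ii) tensor--hom adjunction applied to the dualizing module $\kk$.

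First, I would use that the extension $\Lambda\subset A$ is Frobenius, so (as recalled just before (\ref{202309151217})) there is a bimodule isomorphism $A\cong \Hom_{\Lambda}(A,\Lambda)$ of $A$-$\Lambda$-bimodules, and consequently the functors $A\otimes_{\Lambda}-$ and $\Hom_{\Lambda}(A,-)$ from $\mod\Lambda$ to $\mod A$ are naturally isomorphic. Applied to the left $\Lambda$-module $\tuD(M')=\Hom_{\kk}(M',\kk)$ this yields an isomorphism
\[
A\otimes_{\Lambda}\tuD(M')\ \cong\ \Hom_{\Lambda}\bigl(A,\tuD(M')\bigr)
\]
of left $A$-modules, natural in $M'$. (If one prefers, this can also be seen directly from the fact that $A$ is finitely generated projective as a left $\Lambda$-module: the canonical evaluation map $\Hom_{\Lambda}(A,\Lambda)\otimes_{\Lambda}N\to\Hom_{\Lambda}(A,N)$ is an iso for any $N$, and then one composes with the Frobenius iso.)

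Next I would apply tensor--hom adjunction with $\kk$ in the target slot. Since $M'$ is a right $\Lambda$-module and $A$ is a $\Lambda$-$A$-bimodule, the standard adjunction gives a natural isomorphism
\[
\Hom_{\Lambda}\bigl(A,\Hom_{\kk}(M',\kk)\bigr)\ \cong\ \Hom_{\kk}\bigl(M'\otimes_{\Lambda}A,\kk\bigr)
\ =\ \tuD(M'\otimes_{\Lambda}A),
\]
with the left $A$-module structure on both sides coming from the right $A$-action on $A$. Composing the two displayed isomorphisms gives the desired $A$-linear isomorphism $A\otimes_{\Lambda}\tuD(M')\cong \tuD(M'\otimes_{\Lambda}A)$.

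The only thing that really needs verification is that the two left $A$-module structures agree under the composite, since everything else is routine naturality. Concretely, under the adjunction a functional $f\colon M'\otimes_{\Lambda}A\to \kk$ corresponds to $\tilde f\colon A\to\tuD(M')$ with $\tilde f(b)(m)=f(m\otimes b)$, and then $(af)$ corresponds to $b\mapsto \tilde f(ba)$; transporting this back across the Frobenius isomorphism $A\cong\Hom_{\Lambda}(A,\Lambda)$ recovers left multiplication by $a$ on $A\otimes_{\Lambda}\tuD(M')$. This compatibility of actions is the only mildly technical point; the rest is an assembly of two canonical isomorphisms, both natural in $M'$.
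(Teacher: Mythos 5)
Your proposal is correct and follows essentially the same route as the paper's proof, which is exactly the chain $A\otimes_{\Lambda}\tuD(M')\cong\Hom_{\Lambda}(A,\Lambda)\otimes_{\Lambda}\tuD(M')\cong\Hom_{\Lambda}(A,\tuD(M'))\cong\Hom_{\kk}(M'\otimes_{\Lambda}A,\kk)$ combining the Frobenius bimodule isomorphism with tensor--hom adjunction. Your extra check that the left $A$-module structures match under the composite is a welcome addition that the paper leaves implicit.
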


\begin{proof}
The desired isomorphism is obtained from the following string of isomorphisms: 
\[
\begin{split}
A \otimes_{\Lambda}  \tuD(M')  
& \cong \Hom_{\Lambda}(A, \Lambda) \otimes_{\Lambda}\tuD(M')  \cong \Hom_{\Lambda}(A, \tuD(M')) \\ 
& \cong \Hom_{\kk}(M' \otimes_{\Lambda} A, \kk)
 = \tuD(M' \otimes_{\Lambda} A).
\end{split}
\]
\end{proof}

For simplicity we use the same symbol $(-)^{*}$ to denote the dualities 
$(-)^{*}:=\Hom_{\Lambda}(-,\Lambda)$ and $(-)^{*} := \Hom_{A}(-,A)$

\begin{proof}[Proof of Proposition \ref{202309150753}]
Let $P_{1} \xrightarrow{f} P_{0} \to M \to 0$ be a minimal projective presentation of $M$. 
Then we have $\tau(M) \cong \Ker [D(f^{*}) : \tuD(P_{1}^{*}) \to \tuD(P_{0}^{*})] $ 
and  by Lemma \ref{202309150735}
\[
\begin{split}
A \otimes_{\Lambda} \tau(M) 
& \cong \Ker [A \otimes D(f^{*}): A\otimes_{\Lambda} \tuD(P_{1}^{*}) \to A \otimes_{\Lambda} \tuD(P_{0}^{*})] \\
& \cong \Ker [D(f^{*} \otimes A): \tuD(P_{1}^{*}\otimes_{\Lambda}  A) \to  \tuD(P_{0}^{*} \otimes_{\Lambda}A)] \\
& \cong \Ker [D( (A \otimes f)^{*} ): \tuD((A \otimes_{\Lambda} P_{1})^{*}) \to  \tuD((A \otimes_{\Lambda} P_{0})^{*})] \\
\end{split}
\]
where for the last isomorphism we use the isomorphism 
$\Hom_{\Lambda}(P, \Lambda) \otimes_{\Lambda} A \cong \Hom_{A}(A\otimes_{\Lambda} P, A)$
for a   $\Lambda$-module $P$. 

By Lemma \ref{202309180059}(1), the projective presentation 
 $A\otimes_{\Lambda} P_{1} \xrightarrow{ A\otimes f } A \otimes_{\Lambda} P_{0} \to A\otimes_{\Lambda} M \to 0$ of $A \otimes_{\Lambda} M$ is  minimal. 
It follows that $\tau(A \otimes_{\Lambda} M) = \Ker D((A\otimes f)^{*}) \cong A \otimes_{\Lambda} \tau(M)$.
\end{proof}

\subsubsection{Compatibility of the AR-translation $\tau$ with $R$}

It is well-known that 
if $\chara \kk \nmid |G|$, 
the Auslander-Reiten translations $\tau$ (of $\Lambda$ and of $A$) 
 commutes with  the restriction functor $R$. 
(Dade \cite[Corollary 5.7]{Dade: extending} (see also \cite[Proposition 3.7]{BMM}) proved the commutativity for group graded algebras satisfying certain condition.) 

We remark that the commutativity does not hold true in general. 

\begin{example}
For example, let $\kk = \FF_{2}, G= \ZZ/2\ZZ$ and $\Lambda =\kk$ with the trivial $G$-action. 
Then $A = \kk G \cong \kk[x]/(x^{2})$ and for a trivial $G$-module $N := \kk$ 
we have $R\tau (N) = R(N)$ and $\tau R(N) = 0$. Thus $\tau R(N) \neq R\tau(N)$.
\end{example}

In the next proposition, we show that if we put a mild condition (irrelevant to $|G|$ nor $\chara \kk$) on an $A$-module $N$, 
then we have $\tau R(N) \cong R\tau(N)$.

\begin{proposition}\label{2023091512311}
Let $N$ be an $A$-module and $Q_{1} \xrightarrow{g} Q_{0} \to N \to 0$ be a minimal projective presentation.  
Assume that $N \in \add FR(N)$. 
Then the following assertions hold.

\begin{enumerate}[(1)]

\item The morphism $R(g): R(Q_{1}) \to R(Q_{0})$ gives a minimal projective presentation of $R(N)$.

\item The morphism $FR(g): FR(Q_{1}) \to FR(Q_{0})$ gives a minimal projective presentation of $FR(N)$. 

\item We have 
$\tau R(N)\cong R\tau(N)$.

\end{enumerate} 
\end{proposition}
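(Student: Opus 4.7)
The plan is to prove (1) and (2), which turn out to be equivalent, and then to derive (3) formally from (1) using the Frobenius structure of the extension $\Lambda \subset A$.

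For (1), I would set up the comparison as follows. Let $P_{1} \xrightarrow{f} P_{0} \to R(N) \to 0$ be the minimal projective presentation of $R(N)$ over $\Lambda$. By Corollary \ref{202309180059}(1), applying $F$ yields the minimal projective presentation $F(P_{1}) \to F(P_{0}) \to FR(N) \to 0$ of $FR(N)$ over $A$. Using the hypothesis $N \in \add FR(N)$, write $FR(N) = N \oplus N'$. Since minimal projective presentations decompose over direct sums, $F(P_{i}) \cong Q_{i} \oplus Q_{i}'$ where $Q_{\bullet}'$ is the minimal projective presentation of $N'$. By Corollary \ref{202309151718}, $R(N)$ is $G$-stable, and hence its projective cover $P_{i}$ is $G$-stable (via uniqueness of projective covers applied to ${}_{g}P_{i} \twoheadrightarrow {}_{g}R(N) \cong R(N)$). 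Consequently Lemma \ref{202309151023}(1-b) gives $RF(P_{i}) \cong P_{i}^{\oplus |G|}$, and applying $R$ to $F(P_{i}) \cong Q_{i} \oplus Q_{i}'$ yields $R(Q_{i}) \oplus R(Q_{i}') \cong P_{i}^{\oplus |G|}$. By Krull-Schmidt, $R(Q_{i}) \cong P_{i}^{\oplus k_{i}}$ for some integer $1 \leq k_{i} \leq |G|$.

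The main obstacle is to show $k_{i} = 1$ for both $i = 0, 1$; this is equivalent to both (1) and (2), since $FR(Q_{i}) \cong F(P_{i})$ holds if and only if $R(Q_{i}) \cong P_{i}$ (by applying $R$ and using Krull-Schmidt cancellation with the identity $RFR(Q_{i}) \cong R(Q_{i})^{\oplus |G|}$ from Lemma \ref{202309151023}(2-c)). My intended approach is to analyze the matrix structure of the presentation: after a change of basis, the surjection $R(\pi) \colon R(Q_{0}) = P_{0}^{\oplus k_{0}} \to R(N)$ normalizes to $(\pi_{0}, 0, \ldots, 0)$ with $\ker R(\pi) = \ker \pi_{0} \oplus P_{0}^{\oplus k_{0}-1}$, so that $R(g)$ decomposes via a split surjection onto the redundant summand $P_{0}^{\oplus k_{0}-1}$; tracking this data through the hypothesis and the minimality of $\pi \colon Q_{0} \to N$ over $A$ is intended to force $k_{0} = k_{1} = 1$. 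A complementary route exploits the compatibility $\tau F = F\tau$ from Proposition \ref{202309150753}: the decompositions $\tau FR(N) = \tau N \oplus \tau N'$ and $RF(\tau R(N)) \cong (\tau R(N))^{\oplus |G|}$ (using that $\tau R(N)$ is $G$-stable) yield the identity $R\tau(N) \oplus R\tau(N') \cong (\tau R(N))^{\oplus |G|}$, which one hopes to combine with the analogous data for $N'$ and a Krull-Schmidt cancellation to conclude. Granting (1), part (2) follows by applying $F$ to the minimal projective presentation of $R(N)$ established in (1) and invoking Corollary \ref{202309180059}(1) once more.

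For (3), I would use the standard formula $\tau M = \Ker[D(P_{1}^{\ast}) \to D(P_{0}^{\ast})]$ from a minimal projective presentation. The Frobenius isomorphism $A \cong \Hom_{\Lambda}(A, \Lambda)$ of $A$-$\Lambda$-bimodules induces, via the hom-tensor adjunction, a natural isomorphism $\Hom_{A}(Q, A) \cong \Hom_{\Lambda}(R(Q), \Lambda)$ of right $\Lambda$-modules for $Q \in \mod A$. Combined with the fact that $R$ commutes with the $\kk$-duality $D = \Hom_{\kk}(-, \kk)$ (being a $\kk$-linear forgetful functor), and using part (1) to identify $R(Q_{\bullet}) \to R(N) \to 0$ as the minimal projective presentation of $R(N)$, one obtains
\[
R\tau(N) \;=\; \Ker\bigl[D(R(Q_{1})^{\ast}_{\Lambda}) \to D(R(Q_{0})^{\ast}_{\Lambda})\bigr] \;\cong\; \tau R(N),
\]
as desired.
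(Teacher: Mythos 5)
Your setup for (1) is essentially the paper's: you correctly obtain that $F(P_\bullet)$ is the minimal projective presentation of $FR(N)=N\oplus N'$ (Corollary \ref{202309180059}(1)) and hence that $[Q_1\to Q_0]$ is a direct summand of $[F(P_1)\to F(P_0)]$, with complement the minimal presentation $[Q'_\bullet]$ of $N'$. But the proof of (1) is not actually completed. At the decisive moment you reduce to showing $k_0=k_1=1$ and then offer only two programmes, flagged by ``is intended to force'' and ``one hopes to combine,'' neither of which is carried out. The second route in particular does not close: from $R\tau(N)\oplus R\tau(N')\cong \tau(R(N))^{\oplus|G|}$ you only learn that $R\tau(N)$ lies in $\add \tau R(N)$; without independent control of $R\tau(N')$ (which would require proving the statement for $N'$ as well) Krull--Schmidt gives no cancellation. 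There is also a small slip earlier: since $P_i$ need not be indecomposable, a direct summand of $P_i^{\oplus|G|}$ need not have the form $P_i^{\oplus k_i}$ --- you would have to track the multiplicity of each indecomposable summand of $P_i$ separately.

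The missing idea is short, and you already have everything needed for it. Apply $R$ to the decomposition $[F(P_\bullet)]\cong[Q_\bullet]\oplus[Q'_\bullet]$: then $[R(Q_1)\to R(Q_0)]$ is a direct summand of $[RF(P_1)\to RF(P_0)]$, which is a \emph{minimal} projective presentation of $RF(R(N))$ by Corollary \ref{202309180059}(2); and a direct summand of a minimal projective presentation is again a minimal projective presentation (of the corresponding summand of the module). This yields (1) at once, with no multiplicity counting --- this is exactly how the paper argues. Granting (1), your deductions of (2) (apply $F$ and Corollary \ref{202309180059}(1) again) and of (3) (the Frobenius isomorphism $R(Q^{*})\cong R(Q)^{*}$, commutation of $R$ with $\tuD$, and the identification of $R(Q_\bullet)$ as the minimal presentation of $R(N)$) are correct and coincide with the paper's proof.
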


We note that 
by Lemma \ref{202404031821}, 
in the case $\chara \kk \nmid |G|$, we have $N \in \add FR(N)$ for any $A$-module $N$.  
Hence  we can  recover  the  commutativity $\tau R(N) \cong  R(\tau N)$ in the classical case $\chara \kk \nmid |G|$ from the above proposition.  
We also note that thanks to Lemma \ref{202309150913}, a \textup{(}$\mod G$\textup{)}-stable rigid $A$-module $N$ satisfies the condition $N \in \add FR(N)$.

To prove the proposition, we need the following compatibility of $R$ with dualities $(-)^{*}=\Hom_{\Lambda}(-,\Lambda)$ and $(-)^{*} = \Hom_{A}(-,A)$

\begin{lemma}\label{202309151225}
For an $A$-module $N$, there is an isomorphism $R(N^{*}) \cong R(N)^{*}$ which is natural in $N$.
\end{lemma}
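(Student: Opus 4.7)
The plan is to exploit the Frobenius extension structure $\Lambda \subset A$ recalled at the beginning of Section~\ref{202403291603}: namely, the $A$-$\Lambda$-bimodule isomorphism $\phi: A \xrightarrow{\sim} \Hom_{\Lambda}(A,\Lambda)$. Composing with $\phi_{*} := \Hom_{A}(N,\phi)$ turns $\Hom_{A}(N,A)$ into $\Hom_{A}(N, \Hom_{\Lambda}(A,\Lambda))$ as a right $A$-module (and in particular as a right $\Lambda$-module after restriction), functorially in $N$.

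Next, I would apply the standard Hom--tensor adjunction for the $A$-$A$-bimodule $A$ (viewed on the right as $\Lambda$-module via the inclusion $\Lambda\hookrightarrow A$): for any left $A$-module $N$,
\[
\Hom_{A}\bigl(N,\Hom_{\Lambda}(A,\Lambda)\bigr) \;\cong\; \Hom_{\Lambda}\bigl(A\otimes_{A} N,\Lambda\bigr) \;\cong\; \Hom_{\Lambda}\bigl(R(N),\Lambda\bigr)\;=\; R(N)^{*}.
\]
Composing with the first step gives the desired isomorphism $R(N^{*})\cong R(N)^{*}$, naturally in $N$. Naturality is automatic because both $\phi_{*}$ and the adjunction isomorphism are natural in $N$.

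The only point that needs a moment of care is keeping track of the module structures. On the left, $R(N^{*}) = \Hom_{A}(N,A)$ is a right $A$-module via right multiplication on the second argument, and its right $\Lambda$-module structure is obtained by restricting along $\Lambda \hookrightarrow A$. Under $\phi$, this right $\Lambda$-action corresponds to post-composition with the $\Lambda$-action on the codomain $\Lambda$ of $\Hom_{\Lambda}(A,\Lambda)$; and under the adjunction this in turn corresponds to the right $\Lambda$-action on the codomain of $\Hom_{\Lambda}(R(N),\Lambda)$. Hence the composite isomorphism is one of right $\Lambda$-modules.

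I expect no serious obstacle: the argument is formal once one has the Frobenius isomorphism $\phi$ (already recorded at the start of Section~\ref{202403291603}) together with Hom--tensor adjunction. The only subtlety, as mentioned, is bookkeeping of the left/right and $\Lambda$/$A$ module structures to ensure the final isomorphism is one of right $\Lambda$-modules; this parallels the proof of Lemma~\ref{202309150735}, which handles the dual compatibility for the induction functor $F$.
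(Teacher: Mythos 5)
Your argument is correct and is essentially the paper's own proof: the paper writes $\Hom_{A}(N,A)\cong\Hom_{A}(N,F(\Lambda))\cong\Hom_{\Lambda}(R(N),\Lambda)$ using that $F$ is a right adjoint of $R$, which is exactly the Frobenius isomorphism $\phi$ plus Hom--tensor adjunction that you invoke explicitly. Your extra bookkeeping of the right $\Lambda$-module structures just fills in what the paper dismisses as ``straightforward to check.''
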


\begin{proof}
Since $F$ is a right adjoint of $R$, we have the following isomorphism of $\kk$-vector spaces 
\[
\Hom_{A}(N,A)  \cong \Hom_{A}(N, F(\Lambda)) \cong \Hom_{\Lambda}(R(N), \Lambda).  
\]
It is straightforward to check that this map is compatible with the action of $\Lambda$. 
\end{proof}

\begin{proof}[Proof of Proposition \ref{2023091512311}]
(1)
We set $M := R(N)$. Let $P_{1} \xrightarrow{ f } P_{0} \to M \to 0$ be a minimal projective presentation of $M$. 
It follows from the assumption  $N \in \add F(M)$ that 
the complex $[Q_{1} \to Q_{0}]$ is a direct summand of a direct sum of  the complex $[F(P_{1})  \to F(P_{0})]$. 
Hence the complex $[R(Q_{1}) \to R(Q_{0})]$ is a direct summand of a direct sum of the complex 
$[RF(P_{1}) \to RF(P_{0})]$. 
On the other hand, by Corollary \ref{202309180059}, 
the projective presentation $RF(P_{1}) \xrightarrow{ RF(f) } RF(P_{0}) \to RF(M) \to 0$ is minimal. 
It is easy to see that if a projective presentation is a direct summand of a minimal projective presentation, then it is a minimal projective presentation. 
Thus we conclude that $R(g)$ gives a minimal projective presentation of $R(N)$. 

(2) follows from Corollary \ref{202309180059}.

(3)  We have the following isomorphisms by Lemma \ref{202309151225}
\[
\begin{split}
R \tau(N) 
& \cong \Ker [R\tuD(g^{*}): R\tuD(Q_{1}^{*}) \to R\tuD(Q_{0}^{*})] \\
& \cong \Ker [\tuD(R(g^{*})): \tuD(R(Q_{1}^{*})) \to \tuD(R(Q_{0}^{*}))] \\
& \cong \Ker [\tuD((Rg)^{*}): \tuD( R(Q_{1})^{*}) \to \tuD( R(Q_{0})^{*}) ]. 
\end{split}
\]
By (1),   $R(Q_{1}) \xrightarrow{ R(g) } R(Q_{0}) \to R(N) \to 0$ is a minimal projective presentation of $R(N)$. 
It follows that  $R\tau(N) \cong \Ker \tuD((Rg)^{*}) =\tau R(N)$. 
\end{proof}
%
%
%

%
%
%
%
%
%
%
%
%
%
%
%
%
%
%
%
%
%

\section{$\tau$-tilting theory of skew group algebra extensions}\label{202404102224}

In this section we discuss $\tau$-tilting theory of a skew group algebra extension $\Lambda \subset \Lambda *G$. 
For the definition of support $\tau$-tilting modules and other basic notions and results, 
we refer \cite{AIR}.

\subsection{The bijection}

The next theorem is the first step to prove Theorem \ref{202309301601}, 
which is the $\tau$-tilting part of Main Theorem \ref{202404091842}. 

\begin{theorem}\label{202309151712}

\begin{enumerate}[(1)]

\item If $M$ is a $G$-stable support $\tau$-tilting $\Lambda$-module, then $F(M)$ is a support $\tau$-tilting $A$-module. 

\item If $N$ is a \textup{(}$\mod G$\textup{)}-stable support $\tau$-tilting $A$-module, then $R(N)$ is a support $\tau$-tilting $\Lambda$-module. 
\end{enumerate}
\end{theorem}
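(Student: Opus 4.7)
The plan is to verify the three defining conditions of a support $\tau$-tilting pair for the images under $F=\Ind_{\Lambda}^{A}$ and $R=\Res_{\Lambda}^{A}$ --- namely $\tau$-rigidity, support orthogonality, and the numerical maximality $|M^{\basic}|+|P^{\basic}|=|\Lambda^{\basic}|$ --- by transporting each across the adjunction, using the compatibility results Propositions \ref{202309150753} and \ref{2023091512311} (for $\tau$ with $F$ and $R$) together with the basic isomorphisms of Lemma \ref{202309151023}.

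For (1), write $(M,P)$ for the support $\tau$-tilting pair; since $P$ is uniquely determined by the support of $M$, it inherits $G$-stability from $M$. I first observe that $\tau_{\Lambda}M$ is also $G$-stable, because the twist $(-)\mapsto{}_{g}(-)$ is an exact autoequivalence preserving minimal projective presentations and thus commutes with $\tau_{\Lambda}$. Combining Proposition \ref{202309150753} with the left adjunction $F\dashv R$ and Lemma \ref{202309151023}(1-b) then yields
\[
\Hom_{A}(F(M),\tau_{A}F(M))\cong\Hom_{\Lambda}(M,RF(\tau_{\Lambda}M))\cong\bigoplus_{g\in G}\Hom_{\Lambda}(M,{}_{g}\tau_{\Lambda}M)=0,
\]
so $F(M)$ is $\tau_{A}$-rigid, and a parallel computation gives $\Hom_{A}(F(P),F(M))\cong\Hom_{\Lambda}(P,M)^{|G|}=0$.

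For (2), the same strategy applies with Proposition \ref{2023091512311} replacing Proposition \ref{202309150753}. The hypothesis $N\in\add FR(N)$ required by Proposition \ref{2023091512311} is supplied by Proposition \ref{202309150913} applied to the rigid $(\mod G)$-stable module $N$; the projective complement $Q$ may likewise be taken $(\mod G)$-stable, being determined by the support of $N$. Then
\[
\Hom_{\Lambda}(R(N),\tau_{\Lambda}R(N))\cong\Hom_{\Lambda}(R(N),R(\tau_{A}N))\cong\Hom_{A}(FR(N),\tau_{A}N)=0,
\]
where the last equality uses $FR(N)\in\add N$ and $\tau_{A}$-rigidity of $N$; the same trick yields $\Hom_{\Lambda}(R(Q),R(N))\cong\Hom_{A}(FR(Q),N)=0$.

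The main obstacle in both parts is the numerical maximality $|F(M)^{\basic}|+|F(P)^{\basic}|=|A^{\basic}|$ (resp.\ $|R(N)^{\basic}|+|R(Q)^{\basic}|=|\Lambda^{\basic}|$), the condition that upgrades a $\tau$-rigid pair to a support $\tau$-tilting one. I expect this to be handled by decomposing $M\oplus P$ into $G$-orbits of indecomposable summands: for a $G$-orbit $\{X_{1},\dots,X_{s}\}$, Lemma \ref{202309151023}(1-a) gives $F(X_{i})\cong F(X_{1})$, and the number of indecomposable $A$-summands of $F(X_{1})$ is governed by the structure of its endomorphism algebra, accessible via Clifford-type arguments involving the stabilizer of $X_{1}$. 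Matching these counts against the analogous orbit decomposition of $\Lambda^{\basic}$, which corresponds to the decomposition of $A$ as a projective $A$-module described in Example \ref{202309271819}, should yield the required equality; the symmetric count for (2) uses $R(A)\cong\Lambda^{\oplus|G|}$ (Lemma \ref{202309151023}(2-d)) together with $(\mod G)$-stability.
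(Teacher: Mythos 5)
Your treatment of the rigidity conditions is correct and is essentially the route the paper takes: the paper's first proof of Theorem \ref{202309151712} consists of observing that, once the commutativity $\tau F\cong F\tau$ (Proposition \ref{202309150753}) and $\tau R\cong R\tau$ for modules with $N\in\add FR(N)$ (Proposition \ref{2023091512311}) are in hand, the arguments of Zhang--Huang and Koshio--Kozakai go through verbatim; your adjunction computations for $\Hom_{A}(F(M),\tau_{A}F(M))$, $\Hom_{A}(F(P),F(M))$, and their analogues for $R$ are exactly those arguments, and your observations that $\tau_{\Lambda}M$ inherits $G$-stability and that $Q$ inherits \textup{(}$\mod G$\textup{)}-stability are both correct.

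The genuine gap is the maximality condition, which you correctly identify as ``the main obstacle'' but then only sketch. The orbit-by-orbit Clifford count you propose is not carried out, and as stated it would not go through in the paper's generality: for an indecomposable summand $X_{1}$ of $M\oplus P$, the number of indecomposable summands of $F(X_{1})$ is governed by $\End_{A}(F(X_{1}))/\rad\cong$ a twisted group algebra of $\stab(X_{1})$ over the division ring $\End_{\Lambda}(X_{1})/\rad$, and since $\kk$ is not assumed algebraically closed and $\chara\kk$ may divide $|G|$, there is no reason these local multiplicities match those arising from the orbit decomposition of $\Lambda^{\basic}$ --- the orbits of $\ind(M\oplus P)$ and of $\ind(\Lambda)$ are different sets of modules with different stabilizers and cocycles, so ``matching counts'' amounts to a global identity essentially equivalent to the theorem itself. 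The standard way around this (used in the cited proofs, and made transparent in the paper's second proof via Theorem \ref{202403121806} and Proposition \ref{202403121843}) is to replace the numerical condition by a generation condition that transports trivially under the exact functors: for the $2$-term silting complex $\tuK(M)$ one only needs $\thick F(\tuK(M))=\Kbproj{A}$, which is immediate from $F(\Lambda)=A$, and the presilting condition, which is your adjunction computation; equivalently, at the module level one uses the left-$\add M$-approximation exact sequence characterization of support $\tau$-tilting pairs from \cite{AIR} rather than the count $|M|+|P|=|\Lambda|$. You should either adopt that characterization or supply the missing counting argument; as written, the proof is incomplete at precisely the step that distinguishes support $\tau$-tilting from $\tau$-rigid.
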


\begin{proof}[A sketch of the first proof of Theorem \ref{202309151712}.]
(1) is a generalization of  \cite[Theorem 3.2]{Koshio}  in our setting 
as well as a generalization of \cite[Theorem 4.2]{ZH}. 
With the preparations given before mainly the commutativity $\tau F(M) \cong F(\tau M)$ (Proposition \ref{202309150753}), 
the same proof  with \cite{ZH} works. 

(2) is a generalization of 
 \cite[Theorem 3.4]{KK} in our setting.  
With the preparations given before, 
mainly the commutativity $\tau R(N) \cong R(\tau N)$ for \textup{(}$\mod G$\textup{)}-stable $N$ (Proposition \ref{2023091512311}), 
the same proof  with \cite{KK} works.
\end{proof}

\begin{remark}
In Section  \ref{202403121920} we give an alternative proof of Theorem \ref{202309151712} by using $2$-term silting complexes. 
In the proof we do not use the commutativity $\tau F(M)\cong F(\tau M), \ \tau R(N)\cong R(\tau N)$ that played crucial roles in the first proof. 
\end{remark}

%
%
%
%
%
%
%
%
%
%
%

Let $\sttilt \Lambda, \ \sttilt A$ be the posets of (isomorphism classes of)  basic support $\tau$-tilting modules over $\Lambda$ and $A$ respectively. 
We denote by $(\sttilt \Lambda)^{G}$ the subposet of  $G$-stable basic support $\tau$-tilting $\Lambda$-modules, 
and by $(\sttilt A)^{\mod G}$ the poset of \textup{(}$\mod G$\textup{)}-stable basic support $\tau$-tilting $A$-modules.

We remark that since  we are not assuming that our algebras $\Lambda, A$ are  basic,  strictly speaking 
 the $\tau$-tilting module $\Lambda$ (resp. $A$) does not necessarily belong to $\sttilt \Lambda$ (resp. $\sttilt A$). 
 However, to simplify the notation, we often denote $\Lambda$ and $A$ to denote the basic $\tau$-tilting modules $\Lambda^{\basic}$ 
 and $A^{\basic}$. 

\begin{theorem}[{cf. \cite[Corollary 3.9]{KK}}]\label{202309301601}
The adjoint pair $(F=\Ind_{\Lambda}^{A},R=\Res_{\Lambda}^{A})$ induces an isomorphism of posets 
\[
F= \Ind_{\Lambda}^{A}: (\sttilt \Lambda)^{G} \longleftrightarrow (\sttilt A)^{\mod G} :\Res_{\Lambda}^{A}=R.
\]
\end{theorem}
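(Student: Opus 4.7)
The plan is to verify three things in sequence: that $F$ and $R$ give well-defined maps between the two posets; that these maps are mutually inverse on basic representatives; and that both are order-preserving. The heavy technical preparation has already been completed in the preceding subsections, so the argument will reduce to a formal assembly of those results.

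For well-definedness, I will combine Theorem~\ref{202309151712} (which says $F$ and $R$ preserve support $\tau$-tilting-ness) with Corollary~\ref{202309151718} (which says $F(M)$ is always $(\mod G)$-stable and $R(N)$ is always $G$-stable). Passing to basic representatives, this yields maps
\[
F \colon (\sttilt \Lambda)^{G} \longrightarrow (\sttilt A)^{\mod G}, \qquad R \colon (\sttilt A)^{\mod G} \longrightarrow (\sttilt \Lambda)^{G}.
\]
For mutual inversion, Corollary~\ref{202310021718} gives $\add RF(M) = \add M$ for $G$-stable $M$, hence $RF(M)^{\basic} \cong M$. In the other direction, since any support $\tau$-tilting module is rigid, Proposition~\ref{202309150913} applies to a $(\mod G)$-stable support $\tau$-tilting module $N$ and yields $\add FR(N) = \add N$, hence $FR(N)^{\basic} \cong N$.

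For order-preservation, I will use the standard characterization that $M \leq M'$ in $\sttilt \Lambda$ is equivalent to $\Fac M \subseteq \Fac M'$, i.e.\ to the existence of a surjection $M'^{\oplus k} \twoheadrightarrow M$ for some $k$. Since $F$ is exact, it sends such a surjection to a surjection $F(M')^{\oplus k} \twoheadrightarrow F(M)$, so that $\Fac F(M) \subseteq \Fac F(M')$ and hence $F(M)^{\basic} \leq F(M')^{\basic}$; the argument for $R$ is identical, using that $R$ is also exact. Combined with mutual inversion, this yields the desired poset isomorphism.

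The genuine content of the theorem lives in the preparatory results of Section~\ref{202404102224}, in particular the $\tau$-compatibility statement $\tau F \cong F\tau$ of Proposition~\ref{202309150753} and the conditional $\tau R(N) \cong R\tau(N)$ of Proposition~\ref{2023091512311}; once those are in hand, the poset statement itself is essentially formal. The only point that needs watching is that the absorption criterion $\add FR(N) = \add N$ for $(\mod G)$-stability in Proposition~\ref{202309150913} requires $N$ to be rigid, but this is automatic from the support $\tau$-tilting hypothesis.
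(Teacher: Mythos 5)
Your proposal is correct and follows essentially the same route as the paper: well-definedness from Theorem~\ref{202309151712} (together with the stability of $F(M)$ and $R(N)$ from Corollary~\ref{202309151718}), mutual inversion from Corollary~\ref{202310021718} and Proposition~\ref{202309150913} applied to the (automatically rigid) module $N$, and order-preservation from exactness of $F$ and $R$ together with the surjection characterization of the order on $\sttilt$. No gaps.
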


\begin{proof}
Thanks to Theorem \ref{202309151712}, the maps 
$F: (\sttilt \Lambda)^{G} \to (\sttilt A)^{\mod G}, \ $  $ M \mapsto F(M)^{\basic}$ and 
$R: (\sttilt A)^{\mod G} \to (\sttilt \Lambda)^{G}, \ N \mapsto R(N)^{\basic}$ are well-defined. 
Moreover, it follows from Corollary  \ref{202310021718} and Proposition \ref{202309150913} 
 that these maps are the  inverse maps to each other. 

Recall that for  two support $\tau$-tilting modules $M_{1}, M_{2}$, the inequality 
$M_{1} \geq M_{2}$ is defined to be existence of a surjection $M'_{1} \twoheadrightarrow M_{2}$ from some $M'_{1} \in \add M_{1}$. 
Since the functors $F, R$ preserve surjections, it is straightforward to check that the above maps are poset morphisms.  
\end{proof}

Recall that a finite abelian group $G$ is said to split over $\kk$ if all its simple modules are $1$-dimensional. 
It follows that  the case that a finite group $G$ is abelian and splits over $\kk$,  
\textup{(}$\mod G$\textup{)}-stability of $A$-modules coincides with $\XX$-stability of $A$-modules. 
Thus we obtain the following corollary in which we denote the poset of $\XX$-stable support $\tau$-tilting $A$-modules by $(\sttilt A)^{\XX}$. 

\begin{corollary}\label{202403290935}
Assume that $G$ is a finite abelian group which splits over $\kk$. Then the adjoint pair $(\Ind_{\Lambda}^{A},\Res_{\Lambda}^{A})$ induces an isomorphism of posets 
\[
\Ind_{\Lambda}^{A}: (\sttilt \Lambda)^{G} \longleftrightarrow (\sttilt A)^{\XX} :\Res_{\Lambda}^{A}.
\]
\end{corollary}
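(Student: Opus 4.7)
The plan is to reduce the statement to Theorem \ref{202309301601} by showing that, under the hypothesis that $G$ is a finite abelian group splitting over $\kk$, the posets $(\sttilt A)^{\mod G}$ and $(\sttilt A)^{\XX}$ coincide as subposets of $\sttilt A$. Once this identification is verified, Theorem \ref{202309301601} gives the desired isomorphism directly.

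First, I would observe that $(\mod G)$-stability trivially implies $\XX$-stability from the definitions, since $1$-dimensional simple $G$-modules form a subclass of all finite dimensional $G$-modules. For the converse direction, the splitting hypothesis guarantees that every simple $G$-module is one-dimensional, so $\XX$-stability of an $A$-module $N$ is exactly condition (2) of Proposition \ref{202309150913}: namely $S \otimes_{\kk}^{G} N \in \add N$ for every simple $G$-module $S$. To invoke Proposition \ref{202309150913} and deduce $(\mod G)$-stability from this, I need to know that $N$ is rigid in the sense that $\Ext_{A}^{1}(N,N) = 0$.

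The key technical step is therefore to verify that every support $\tau$-tilting $A$-module $N$ is rigid. This follows from the defining $\tau$-rigidity condition $\Hom_{A}(N, \tau N) = 0$ via Auslander--Reiten duality, which exhibits $\tuD\Ext_{A}^{1}(N, N)$ as a quotient of $\Hom_{A}(N, \tau N)$ and hence forces $\Ext_{A}^{1}(N, N) = 0$. With rigidity in hand, Proposition \ref{202309150913} applies and shows that every $\XX$-stable support $\tau$-tilting $A$-module is $(\mod G)$-stable, so $(\sttilt A)^{\XX} = (\sttilt A)^{\mod G}$ as subposets of $\sttilt A$. Substituting this identification into Theorem \ref{202309301601} completes the proof. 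I expect no real obstacle beyond the AR-duality step used to promote $\tau$-rigidity to rigidity, which is classical; everything else is a direct application of previously established results and unwinding of the definitions.
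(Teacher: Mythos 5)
Your proposal is correct and follows essentially the same route as the paper: the corollary is deduced from Theorem \ref{202309301601} by identifying $(\sttilt A)^{\XX}$ with $(\sttilt A)^{\mod G}$ via Proposition \ref{202309150913}, using that the splitting hypothesis makes condition (2) of that proposition coincide with $\XX$-stability. You are in fact slightly more careful than the paper's one-line justification, since you explicitly supply the rigidity of support $\tau$-tilting modules (via AR duality from $\tau$-rigidity) needed to invoke Proposition \ref{202309150913} — a hypothesis that genuinely cannot be dropped, as the paper's own non-rigid counterexample with $G=C_2$ in characteristic $2$ shows.
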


\begin{remark}\label{202403290936}
We note that if $\kk$ is algebraically closed, then every finite abelian group $G$ splits over $\kk$. 
In the case that $\kk$ is algebraically closed, 
the statement of Corollary \ref{202403290935}  for an abelian group $G$ such that $\chara \kk \nmid |G|$ 
was claimed in \cite[Corollary 4.7]{ZH}, which was obtained as a consequence of 
\cite[Theorem 4.6]{ZH} that claimed the same statement holds true even in the case that $G$ is solvable
such that $\chara \kk \nmid |G|$.
However, as is shown in Example \ref{202309271835}, there exists an example of an algebra $\Lambda$ with an action $G\curvearrowright \Lambda$ 
of a solvable group $G$ such that $\chara \kk \nmid |G|$ 
 that satisfies  $(\sttilt A)^{\mod G} \subsetneq (\sttilt A)^{\XX}$ where we set $A:= \Lambda *G$. 
 Consequently, the map $\Ind_{\Lambda}^{A}: (\sttilt \Lambda)^{G} \to (\sttilt A)^{\XX}$ is not a bijection.  
\end{remark}

\subsubsection{An example}

\begin{example}\label{202309271835}

Assume that $\kk$ is algebraically closed. 

Let $G=\frkS_{3}$ be the symmetric group of degree $3$ and $\Lambda := \kk[x]/(x^{2})$. 
We equip an action of $G$ on $\Lambda$ to be $\sigma\cdot x:= \sgn(\sigma) x$ for $\sigma \in S_{3}$. 
We set $A:= \Lambda * G$. 

Since the algebra $\Lambda$ is local, we have $\sttilt \Lambda = \{ 0 < \Lambda\}$. 
It is clear that both of $\Lambda, 0$ are $G$-stable. Hence we have $(\sttilt \Lambda)^{G} =\{0 < \Lambda\}$. 
It follows from Theorem \ref{202309301601}  that $(\sttilt A)^{\mod G} = \{ 0 < A\}$. 

From now, we directly compute the Hasse quiver of $\sttilt A$, 
determine the actions $S\otimes_{\kk}^{G}-$ of simple $G$-module on $\sttilt A$ and finally check that $(\sttilt A)^{\mod G}=\{ 0< A\}$ actually holds. 
We see that the $\tau$-tilting theory of $A$ is heavily depends on the characteristic of the base fields.

\noindent 
(I) The case $\chara \kk \neq 2,3$.

To describe the skew group algebra $A$, we set  $Q$ to  be the quiver below and  $A'_{12}:= \kk Q/(x'x'', x''x')$.
\[
\begin{xymatrix}{
1 \ar@<2pt>[r]^{x'} & 2\ar@<2pt>[l]^{x''}
}\end{xymatrix}
\]
Then using the general method given in  \cite[Section 3]{BSW} (or direct computation), 
we can check that $A$ is isomorphic to $A'_{12} \times M_{2}(\Lambda)$ and hence Morita equivalent to $A':=A'_{12} \times \Lambda$. 
We set  $e_{3}:=(0,1_{\Lambda}) \in A'$. 

The Hasse diagram of the poset $\sttilt A=\sttilt A'$ is given below: 
\[\begin{xymatrix}@R=2pt@C=2pt{
& Q_{1}\oplus Q_{2} \oplus Q_{3} \ar@{-}[ddl] \ar@{-}[ddr] \ar@{-}[drrr] &&&&\\
&&&& Q_{1} \oplus Q_{2} \ar@{-}[ddl] \ar@{-}[ddr] & \\
 S_{2} \oplus Q_{2} \oplus Q_{3} \ar@{-}[dd] \ar@{-}[drrr] && Q_{1} \oplus S_{1} \oplus Q_{3}  \ar@{-}[dd] \ar@{-}[drrr]&&& \\
 &&& S_{2} \oplus Q_{2}\ar@{-}[dd] && Q_{1} \oplus S_{1}\ar@{-}[dd] \\
 S_{2}  \oplus Q_{3} \ar@{-}[drrr] && S_{1} \oplus Q_{3} \ar@{-}[drrr]&&& \\
 &&& S_{2}  &&   S_{1} \\
 &Q_{3} \ar@{-}[uul] \ar@{-}[uur]  \ar@{-}[drrr]&&&& \\
 &&&& 0  \ar@{-}[uul] \ar@{-}[uur] 
}\end{xymatrix}
\]
where $Q_{1}, Q_{2}, Q_{3}$ denote the indecomposable projective $A'$-modules corresponding to the vertices  $1,2,3$ 
and $S_{1}, S_{2}$ denote the simple $A'$-modules corresponding to the vertices  $1,2$. 

We study \textup{(}$\mod G$\textup{)}-stability and $\XX$-stability. 
Let $\kk_{G}$ be the trivial representation, $\kk_{\sgn}$ the sign representation and $V$ the unique irreducible $2$-dimensional representation of $G$. 
Using Example \ref{202309271819} we can check that 
\begin{equation}\label{202309291219}
\begin{split}
\kk_{\sgn}\otimes_{\kk}^{G} Q_{1} \cong Q_{2}, \ \kk_{\sgn} \otimes_{\kk}^{G} Q_{2} \cong Q_{1}, \ \kk_{\sgn}\otimes_{\kk}^{G} Q_{3} \cong Q_{3}, \\
\kk_{\sgn}\otimes_{\kk}^{G} S_{1} \cong S_{2}, \ \kk_{\sgn} \otimes_{\kk}^{G} S_{2} \cong S_{1}, \ \kk_{\sgn}\otimes_{\kk}^{G} S_{3} \cong S_{3}, \\
V\otimes_{\kk}^{G} Q_{1} \cong Q_{3}, \ V \otimes_{\kk}^{G} Q_{2} \cong Q_{3}, \ V\otimes_{\kk}^{G} Q_{3} \cong Q_{1} \oplus Q_{2} \oplus Q_{3}. 
\end{split}
\end{equation}

Thus, \textup{(}$\mod G$\textup{)}-stable support $\tau$-tilting modules are precisely $Q_{1} \oplus Q_{2} \oplus Q_{3}$  and $0$. 
These are $\XX$-stable. But there are two more $\XX$-stable support $\tau$-tilting modules $Q_{3}$ and $Q_{1} \oplus Q_{2}$. 

\begin{remark}
The above isomorphisms \eqref{202309291219} are that in the category $A'\mod$. 
Thus, in particular, we regard $\kk_{\sgn} \otimes_{\kk}^{G}-, \ V\otimes_{\kk}^{G}-$ are endofunctors of $A'\mod$ 
via the equivalence $(A'\times \Lambda)\mod \cong \mod A$. 
The same remark is applied to the isomorphisms \eqref{2023092912192}.
\end{remark}

\noindent 
(II) The case $\chara \kk = 2$. 
In this case, the action $G \curvearrowright \Lambda$ is trivial. 
It follows that  the skew group algebra $A$ is the ordinary group algebra $A= \Lambda \otimes \kk G$.
It is well-known that $\kk G \cong \kk[y]/(y^{2}) \times M_{2}(\kk)$. 
Hence we see that $A \cong \Lambda[y]/(y^{2}) \times M_{2}(\Lambda)$ and that 
$A$ is Morita equivalent to $A'':=\Lambda[y]/(y^{2}) \times \Lambda= \kk[x,y]/(x^{2},y^{2}) \times \kk[x]/(x^{2})$.

The Hasse diagram of the poset $\sttilt A=\sttilt A''$ is given below: 
\[\begin{xymatrix}@R=2pt@C=2pt{
& Q_{1} \oplus Q_{2} \ar@{-}[dl] \ar@{-}[dr] & \\
 Q_{2}\ar@{-}[dr] && Q_{1} \ar@{-}[dl] \\
 & 0  
}\end{xymatrix}
\]
where $Q_{1}$ and  $Q_{2}$ denote the indecomposable projective $A''$-modules corresponding to the primitive idempotent elements
 $e_{1} := (1,0), e_{2}:= (0,1)$ of $A''$. 

Since we are assuming $\chara \kk =2$, the symmetric group $G=\frkS_{3}$ has two simple modules, the trivial module $\kk_{G}$ and 
the unique  $2$-dimensional irreducible representation $V$.  
It follows that every $A$-module is $\XX$-stable. 
We can check that 
\begin{equation}\label{2023092912192}
\begin{split}
V \otimes_{\kk}^{G} Q_{1} \cong Q_{2}^{\oplus 2}, \ V \otimes_{\kk}^{G} Q_{2} \cong Q_{1}\oplus Q_{2}. 
\end{split}
\end{equation}
Thus, \textup{(}$\mod G$\textup{)}-stable support $\tau$-tilting modules are $Q_{1} \oplus Q_{2}$  and $0$. 

\noindent 
(III) The case $\chara \kk = 3$. 

Let $\eta, \zeta \in G$ be elements of order $3$ and $2$ respectively. 
Then we have $G \cong \agl{\eta} \rtimes \agl{\zeta}$. 
Since $\eta$ trivially acts on $\Lambda$, we have  
$A = \Lambda * G \cong (\Lambda * \agl{\eta}) * \agl{\zeta}\cong (\Lambda\otimes\kk[y]/(y^{3})) *\agl{\zeta}\cong (\kk[x,y]* \agl{\zeta} )/(x^{2}, y^{3})$ where in the right most term, we take the  action $\agl{\zeta} \curvearrowright \kk[x,y]$ given by $\zeta\cdot x: =-x, \ \zeta\cdot y:= -y$. 
It is well-known that the algebra $\kk[x,y]* \agl{\zeta}$ is isomorphic to the preprojective algebra $\Pi(\tilde{\AA}_{1})$ of the extended Dynkin quiver $\tilde{\AA}_{1}$. 
It  follows that the skew group algebra $A$ is isomorphic to the following path algebra $\kk Q''/I$ with the relations 
\[
Q'': 
\begin{xymatrix}@R=40pt{
1 \ar@/^10pt/@<10pt>[rr]^{x}\ar@<5pt>@/^8pt/[rr]_{y} & &
2 \ar@/^10pt/@<10pt>[ll]^{x'}\ar@<5pt>@/^8pt/[ll]_{y'}, 
}\end{xymatrix}
\] 
and  the relations are $xx'=0, x'x= 0, yy'y=0, y'yy'=0, x'y=y'x, xy'=yx'$.

The Hasse quiver of the poset $\sttilt A$ has two connected components both of which are infinite: 
\begin{equation}\label{202403291436}
\begin{xymatrix}@R=2pt@C=2pt{
& Q_{1} \oplus Q_{2} \ar@{-}[ddl] \ar@{-}[ddr] & \\
& \ar@{--}[ddddddd]&& \\
N_{(2)} \oplus Q_{2}\ar@{-}[dd] && Q_{1} \oplus N'_{(2)}\ar@{-}[dd] \\
&& \\
N_{(2)} \oplus N_{(3)} \ar@{-}[dd] &&  N'_{(3)}\oplus N'_{(2)} \ar@{-}[dd]  \\
&& \\
N_{(4)} \oplus N_{(3)} \ar@{-}[dd] &&  N'_{(3)}\oplus N'_{(4)} \ar@{-}[dd]  \\
&& \\
\vdots && \vdots \\
}\end{xymatrix} 
\begin{xymatrix}@R=2pt@C=2pt{
\vdots \ar@{-}[dd] &\ar@{--}[ddddddddd]& \vdots \ar@{-}[dd]\\
&&&&\\ 
L_{(3)} \oplus L_{(4)}   \ar@{-}[dd] && L'_{(3)} \oplus L'_{(4)} \ar@{-}[dd] \\
&& \\
L_{(3)} \oplus L_{(2)}   \ar@{-}[dd] && L'_{(3)} \oplus L'_{(2)} \ar@{-}[dd] \\
&& \\
S_{1} \oplus L_{(2)}  \ar@{-}[dd] && S_{2} \oplus L'_{(2)} \ar@{-}[dd] \\
&& \\  
S_{1} \ar@{-}[ddr] && S_{2} \ar@{-}[ddl]\\
&& \\
&0&
}\end{xymatrix}
\end{equation}
where 
\[
\begin{split}&
N_{(2)}:=\begin{smallmatrix}
&2 & & 2& &\\
1& &1 &&1&\\
&&&2&&2
\end{smallmatrix}, 
N_{(3)}:=\begin{smallmatrix}
&2&&2 & & 2& &\\
1&&1& &1 &&1&\\
&&&&&2&&2
\end{smallmatrix}, 
N_{(4)}:=\begin{smallmatrix}
&2 &&2&&2 & & 2& &\\
1 &&1&&1& &1 &&1&\\
&&&&&&&2&&2
\end{smallmatrix}, \dots \\
&
L_{(2)} := 
\begin{smallmatrix} 
2 & & 2 \\ 
& 1 
\end{smallmatrix}, 
L_{(3)} := 
\begin{smallmatrix} 
2 & & 2 &&2 \\ 
& 1  && 1
\end{smallmatrix}, 
L_{(4)} := 
\begin{smallmatrix} 
2&&2 & & 2 &&2 \\ 
& 1&& 1  && 1
\end{smallmatrix}, \dots 
\end{split}
\]
and the composition series of $N'_{(i)}, L'_{(i)}$ for $i=2,3,\dots$  are obtained from that of $N_{(i)}, L_{(i)}$ by replacing the composition factors $1,2$ 
with $2,1$. 

We can show by the same method of the proof of \cite[Theorem 6.17]{DIJ}  
that the Hasse quiver of $\sttilt A$ has precisely two components given in \eqref{202403291436}.  
(We have the following alternative proof.  The element $\eta:= yy' +y'y$ of $A$ is central and belongs to the Jacobson radical.
It follows from \cite[Theorem 4.1]{EJR} that the canonical map $\sttilt A \to \sttilt A/(\eta)$ is bijective.  
On the other hand, the algebra $A/(\eta)$ is precisely the algebra dealt in \cite[Theorem 6.17]{DIJ}.  
Hence  we can obtain the Hasse quiver of  the poset $\sttilt A/(\eta) \cong \2silt A/(\eta)$.) 

Since we are assuming $\chara \kk =3$, the symmetric group $G=\frkS_{3}$ has two simple modules, the trivial module $\kk_{G}$ and the sign representation $\kk_{\sgn}$. It follows in particular that \textup{(}$\mod G$\textup{)}-stability coincides with $\XX$-stability. 
We can check that 
\[
\begin{split}
\kk_{\sgn}\otimes_{\kk}^{G} Q_{1} \cong Q_{2}, \ \kk_{\sgn} \otimes_{\kk}^{G} Q_{2} \cong Q_{1},
\kk_{\sgn}\otimes_{\kk}^{G} S_{1} \cong S_{2}, \ \kk_{\sgn} \otimes_{\kk}^{G} S_{2} \cong S_{1}.
\end{split}
\]
It follows that the functor $\kk_{\sgn}\otimes^{G}_{\kk}-$ induces the reflection with respect to the vertical dotted lines of the Hasse quiver of the poset $\sttilt A$.

Thus, we check that \textup{(}$\mod G$\textup{)}-stable support $\tau$-tilting modules are $Q_{1} \oplus Q_{2}$  and $0$. 
\end{example}

\subsection{$\tau$-tilting finiteness}\label{202404091141}

Recall that an algebra $\Lambda$ is said to be \emph{$\tau$-tilting finite} 
if $|\sttilt \Lambda| < \infty$. 
From Example \ref{202309271835}(III) we observe that $\tau$-tilting finiteness is not necessarily preserved by a skew group algebra extension. 
The aim of this section is to prove Theorem \ref{202309301357} that tells that finiteness of support $\tau$-tilting modules of $\Lambda$ is inhered to $A = \Lambda * G$ under certain condition that can be easily checked in many situations.

\subsubsection{Preparations}

We need preparations.
In many situation of applications, an algebra $\Lambda=\kk Q/(r_{1}, \dots, r_{n})$ is often given by a quiver $Q$ with the relations $r_{1}, \dots, r_{n}$ 
and an action $G\curvearrowright \Lambda$  is often  induced from an action $G\curvearrowright Q$ on the quiver $Q$ 
that preserves the ideal $(r_{1}, \dots, r_{n})$. 
The following definition is an abstraction of this situation.

\begin{definition}\label{202309301259}
Let $\Lambda$ be an algebra with a $G$-action. 

We say that the action $G \curvearrowright \Lambda$ \emph{preserves idempotent elements} 
if the following conditions (1)(2) hold. 
We say that  the action $G \curvearrowright \Lambda$ \emph{preserves idempotent elements and has local stabilizers}  
if the following conditions (1)(2)(3)  hold. 
\begin{enumerate}[(1)]

\item 
A finite dimensional  algebra $\Lambda$ is  basic and elementary 
with a  complete set $\sfE \subset \Lambda$ of primitive orthogonal idempotent elements. 
Namely, for $e, e' \in \sfE, \ e\neq e'$, we have $e(\Lambda/J_{\Lambda} )e' = 0$ and $e (\Lambda/J_{\Lambda}) e = \kk$. 

\item 
The action of $G$ preserves $\sfE$. 
Namely,  
for any  $e \in \sfE$ and $g \in G$, we have $g(e) \in \sfE$

\item For any $e \in \sfE$, the group algebra $\kk \stab(e)$ of  the stabilizer group $\stab(e)$ is local.  
\end{enumerate}

\end{definition}

As is mentioned above, 
the following is an elementary and basic example of 
an action $G \curvearrowright \Lambda$ that preserves idempotent elements. 

\begin{example}
Let $Q$ be a finite quiver and $I$ a two-sided admissible ideal of $\kk Q$. 
If a finite group $G$ acts on $Q$ in such a way that the extended action on $\kk Q$ preserves $I$. 
Then the induced action of $G$ on $\Lambda := \kk Q/I$ preserves the canonical  idempotent elements 
$\sfE:= \{ e_{i} \mid i \in Q_{0}\}$ that correspond to the vertices  of $Q$. 
\end{example}

The following lemma provides basic properties of 
indecomposable projective modules over $A =\Lambda*G$ 
in the case that  an action $G \curvearrowright \Lambda$  preserves idempotent elements.

\begin{lemma}\label{202309301224}
Let $\Lambda$ be an algebra with a $G$-action that preserves idempotent elements and $\sfE$ the fixed set in  Definition \ref{202309301259}.
Then the following assertions hold. 

\begin{enumerate}[(1)]
\item 
Let $e, e' \in \sfE$. 
Then the idempotent elements $e, e'$ are equivalent as idempotent elements of $A$ 
if and only if these elements are in the same $G$-orbit.

\item 
 An element $e \in \sfE$ (regarded as an element of $A$ via the canonical inclusion $\Lambda \subset A$) is primitive in $A$
  if and only if  the group algebra $\kk \stab(e)$ is local where  $\stab(e)$ is the stabilizer subgroup of $e$.  
 
 \item The projective module $A e$ is \textup{(}$\mod G$\textup{)}-stable for each $e\in \sfE$. 
 
 \end{enumerate}
\end{lemma}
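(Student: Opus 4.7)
The plan is to address the three assertions in turn, using as main tools the restriction computation $R(Ae) = RF(\Lambda e) \cong \bigoplus_{g \in G} \Lambda g(e)$ from Lemma~\ref{202309151607}(2), the $G$-invariance $g(J_{\Lambda}) = J_{\Lambda}$ from Lemma~\ref{202309151536}(1), and the $(\mod G)$-stability of induced modules from Corollary~\ref{202309151718}(1).

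For (1), the ``if'' direction is constructive: suppose $e' = g(e)$ for some $g \in G$. Then in $A$ the elements $a := eg^{-1}$ and $b := ge$ lie in $eAe'$ and $e'Ae$ respectively, and a direct computation using $g e g^{-1} = g(e)$ in $A$ gives
\[
ab = e g^{-1} \cdot ge = e, \qquad ba = g e \cdot e g^{-1} = g e g^{-1} = g(e) = e',
\]
exhibiting the equivalence. For ``only if'', equivalent idempotents produce an isomorphism $Ae \cong Ae'$ of $A$-modules, hence $R(Ae) \cong R(Ae')$ as $\Lambda$-modules. By Lemma~\ref{202309151607}(2) this reads $\bigoplus_{g \in G} \Lambda g(e) \cong \bigoplus_{g \in G} \Lambda g(e')$, and since $\Lambda$ is basic and elementary, the indecomposable projectives $\{\Lambda f\}_{f \in \sfE}$ are pairwise non-isomorphic; Krull--Schmidt then forces the $G$-orbits of $e$ and $e'$ to coincide.

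For (2), I will compute $\End_A(Ae) = eAe$ explicitly and identify its semisimple quotient with $\kk\stab(e)$. Using $Ae = \bigoplus_{g \in G} \Lambda g(e) * g$ one obtains
\[
eAe = \bigoplus_{g \in G} e\Lambda g(e) * g.
\]
Set $\mathfrak{n} := eJ_{\Lambda}Ae = \bigoplus_{g \in G} eJ_{\Lambda}g(e) * g$. The ideal $J_{\Lambda}A \subset A$ is two-sided because $AJ_{\Lambda} \subseteq J_{\Lambda}A$ follows from $g(J_{\Lambda}) = J_{\Lambda}$ (Lemma~\ref{202309151536}(1)); since $J_{\Lambda}$ is nilpotent, so is $J_{\Lambda}A$, and consequently $\mathfrak{n}$ is a nilpotent two-sided ideal of $eAe$, necessarily contained in $J_{eAe}$. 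For $g \notin \stab(e)$ the basic-elementary hypothesis gives $e\Lambda g(e) = eJ_{\Lambda}g(e)$, while for $g \in \stab(e)$ we have $e\Lambda e = \kk e \oplus eJ_{\Lambda}e$. Hence
\[
eAe/\mathfrak{n} \;=\; \bigoplus_{g \in \stab(e)} \kk (e * g),
\]
and the computation $(e*g)(e*h) = e\cdot g(e) * gh = e*gh$ for $g,h \in \stab(e)$ shows that this quotient is isomorphic as a $\kk$-algebra to $\kk\stab(e)$. Since a finite-dimensional algebra is local iff its quotient by a nilpotent ideal is, $eAe$ is local iff $\kk\stab(e)$ is local. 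As $e$ is primitive in $A$ iff $\End_A(Ae) = eAe$ is local, this yields (2).

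Assertion (3) is then immediate: Corollary~\ref{202309151718}(1) states that $F(M)$ is $(\mod G)$-stable for every $\Lambda$-module $M$, applied to $M = \Lambda e$. The main obstacle I anticipate lies in (2), specifically in promoting $\mathfrak{n}$ from a bare subspace to an honest nilpotent two-sided ideal of $eAe$: this step uses the $G$-invariance of $J_{\Lambda}$ in an essential way, and the subsequent identification of $eAe/\mathfrak{n}$ with $\kk\stab(e)$ requires careful tracking of how the twisted skew-group multiplication collapses to the ordinary group multiplication precisely on the stabilizer.
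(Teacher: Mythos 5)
Your proof is correct, and parts of it take a genuinely different (and in places cleaner) route than the paper's. For the ``only if'' direction of (1), the paper argues by contradiction: if $e,e'$ lie in different orbits, then any $a\in eAe'$ and $b\in e'Ae$ have all their $\Lambda$-components in $eJ_{\Lambda}g(e')$ resp.\ $e'J_{\Lambda}g(e)$ by the elementary hypothesis, so $ab=e$ would lie in the nilpotent ideal $J_{\Lambda}*G$ --- impossible for a nonzero idempotent. Your Krull--Schmidt argument on $R(Ae)\cong\bigoplus_{g\in G}\Lambda g(e)$ reaches the same conclusion and has the advantage of reusing Lemma \ref{202309151607}(2) rather than redoing a computation; both are sound. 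For (2) the two arguments are essentially the same reduction modulo the ideal generated by $J_{\Lambda}$: the paper works globally with $A/\langle J_{\Lambda}\rangle\cong(\Lambda/J_{\Lambda})*G$ and then cuts down by $e$, whereas you cut down by $e$ first and exhibit $\mathfrak{n}=eJ_{\Lambda}Ae$ as a nilpotent ideal of $eAe$ with quotient $\kk\stab(e)$; the bookkeeping you flag as the ``main obstacle'' is handled correctly. For (3) your argument is shorter than the paper's: since $Ae=A\otimes_{\Lambda}\Lambda e=F(\Lambda e)$, Corollary \ref{202309151718}(1) applies immediately, whereas the paper passes through the orbit sum $\bigoplus_{g\in R}\Lambda g(e)$ viewed as an $A$-submodule of $\Lambda$ and then invokes part (1) to get $\add Ae=\add\bigl(\bigoplus_{g\in R}Ag(e)\bigr)$. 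Your shortcut is valid and arguably preferable.
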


\begin{proof}
(1)
``If'' part. 
We assume that there exists $g \in G$ such that $e' = g(e)$. 
Then the elements $a := e*g^{-1}, \ b:= g(e) * g$ satisfy the equations 
\begin{equation}\label{202309301535}
ea= ae' =a, \ e'b=be =b, \ ab= e, \ ba = e'.
\end{equation}

``Only if'' part. 
We prove only if part by contradiction. 
Assume that $e$ and $e'$ belong to different $G$-orbit. 
Let $a, b\in A$ that satisfy the equations \eqref{202309301535}. 
We write $a = \sum_{g\in G} a_{g} * g, \ b= \sum_{g\in G} b_{g} *g$ with $a_{g}, b_{g} \in \Lambda$. 
It follows from the first and second equations of \eqref{202309301535} that 
$a_{g} \in e \Lambda g(e')= eJ_{\Lambda} g(e'), \ b_{g} \in e'\Lambda g(e) =e'J_{\Lambda} g(e)$. 
Therefore, $e= ab, e' =ba $ belong to $J_{\Lambda} * G$. In particular, we come to the contradiction that  these elements are nilpotent.

(2) 
The two-sided ideal $\agl{J_{\Lambda}} :=AJ_{\Lambda} A $ of $A$ generated by $J_{\Lambda} \subset \Lambda \subset A$ 
is the linear span of the  elements of the forms $j * g \ (   j \in J_{\Lambda}, g \in G)$. 
It follows that $A/\agl{J_{\Lambda}} \cong \Lambda/J_{\Lambda} * G$. 
Observe that for $r\in \Lambda, g\in G$, we have $e(\overline{r} * g)e = (e\overline{r} g(e)) *g$ where $\overline{r} := r + J_{\Lambda}$. 
It follows from the  condition (1) of Definition \ref{202309301259} that $e(A/\agl{J_{\Lambda}}) e \cong \kk\stab(e)$. 

Since $\agl{J_{\Lambda}} \subset J_{A}$ by Lemma \ref{202309151536},
the algebra $e(A/J_{A}) e$ is the quotient algebra of $e(A/\agl{J_{\Lambda}})e$ by its Jacobson radical $e (J_{A}/\agl{J_{\Lambda}})e$.  
It follows that  
$\kk \stab(e) \cong e(A/\agl{J_{\Lambda}})e$ is local if and only if $e( A/J_{A}) e$ is a field 
if and only if $e$ is primitive as an element of $A$. 

(3) We fix a complete set $R$ of the representatives of $G/\stab(e)$ that contains the unit $e_{G}$ of $G$. 
Then $\bigoplus_{g \in R} \Lambda g(e)$ is a direct summand of the $A$-module $\Lambda$ 
and $\bigoplus_{g\in R}A g(e) =(\bigoplus_{g \in R} \Lambda g(e)) *G$.  
Hence $\bigoplus_{g \in R} Ag(e)$ is \textup{(}$\mod G$\textup{)}-stable. 
By (1), we have $\add A e= \add(\bigoplus_{g \in R} Ag(e))$. 
Therefore, we conclude that $Ae$ is \textup{(}$\mod G$\textup{)}-stable. 
\end{proof}

\subsubsection{Finiteness theorem}

\begin{theorem}\label{202309301357}
Let $G$ be a finite  group and  $\Lambda$  an algebra with $G$-action that preserves idempotent elements and has  local stabilizers. 
Moreover, we assume  
$|(\sttilt \Lambda)^{G}|< \infty$.  
 Then the following assertions hold.
 
 \begin{enumerate}[(1)]
 
\item  $A = \Lambda * G$ is  $\tau$-tilting finite.

\item All supprot $\tau$-tilting $A$-modules are \textup{(}$\mod G$\textup{)}-stable.

\item The adjoint pair $(\Ind_{\Lambda}^{A}, \Res_{\Lambda}^{A})$ induces a bijection 
\[
\Ind_{\Lambda}^{A}: ( \sttilt \Lambda)^{G} \longleftrightarrow \sttilt A: \Res_{\Lambda}^{A}.
\]
\end{enumerate}
\end{theorem}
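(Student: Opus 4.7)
The plan is to deduce all three assertions from a single claim: that every basic support $\tau$-tilting $A$-module is \textup{(}$\mod G$\textup{)}-stable. Indeed, Theorem \ref{202309301601} already supplies mutually inverse poset isomorphisms $\Ind_{\Lambda}^{A}:(\sttilt \Lambda)^{G}\leftrightarrow (\sttilt A)^{\mod G}:\Res_{\Lambda}^{A}$, so the hypothesis $|(\sttilt \Lambda)^{G}|<\infty$ forces $(\sttilt A)^{\mod G}$ to be finite; once one knows $\sttilt A = (\sttilt A)^{\mod G}$, assertions (1) and (3) follow at once. As a first reduction, Lemma \ref{202309301224} applied under the local stabilizer hypothesis shows that each $e\in \sfE$ is primitive in $A$, that the indecomposable projective $A$-modules are precisely $\{Ae\}_{e\in \sfE/G}$, and that each such $Ae$ is already \textup{(}$\mod G$\textup{)}-stable. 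Hence the projective summands of any $N\in\sttilt A$ contribute no trouble, and the task is to control the non-projective ones.

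Let $N$ be a basic support $\tau$-tilting $A$-module and set $M := \Res_{\Lambda}^{A}(N)^{\basic}$; by Corollary \ref{202309151718}(2), $M$ is $G$-stable. I would then establish, in order:
(a) $N\in \add \Ind_{\Lambda}^{A}\Res_{\Lambda}^{A}(N)$;
(b) $M\in (\sttilt \Lambda)^{G}$; and
(c) $N\cong \Ind_{\Lambda}^{A}(M)^{\basic}$.
Given (a), Proposition \ref{2023091512311} supplies the compatibility $\tau \Res_{\Lambda}^{A}(N)\cong \Res_{\Lambda}^{A}\tau N$, which allows the defining properties of a support $\tau$-tilting module to be transferred from $N$ to $\Res_{\Lambda}^{A}(N)$, yielding (b). By Theorem \ref{202309151712}(1), $\Ind_{\Lambda}^{A}(M)$ is then a support $\tau$-tilting $A$-module, and it is \textup{(}$\mod G$\textup{)}-stable by Corollary \ref{202309151718}(1). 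Now (a) gives $N\in\add \Ind_{\Lambda}^{A}\Res_{\Lambda}^{A}(N)\subseteq \add \Ind_{\Lambda}^{A}(M)$, and comparing via the identity $\add \Res_{\Lambda}^{A}\Ind_{\Lambda}^{A}(M) = \add M = \add \Res_{\Lambda}^{A}(N)^{\basic}$ forces the basic indecomposable count of $N$ to match that of $\Ind_{\Lambda}^{A}(M)^{\basic}$, giving (c). In particular $N$ is \textup{(}$\mod G$\textup{)}-stable.

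The main obstacle is step (a): proving $N\in \add \Ind_{\Lambda}^{A}\Res_{\Lambda}^{A}(N)$ from only the local stabilizer hypothesis, which is strictly weaker than the $\kk G$-local hypothesis of Lemma \ref{202404031821}(ii). My plan for this is an orbit-wise localization: decompose $N=\bigoplus_{i}N_{i}$ into indecomposables and observe that the top of each $N_{i}$ is concentrated at a single $G$-orbit $[e]\in \sfE/G$, so that the behavior of $eN_{i}$ is governed by the local algebra $\kk\stab(e)$. Combining this locality with the rigidity of $N$, in the spirit of the proof of Lemma \ref{202404031821}(ii) but applied block-by-block over orbits of $\sfE$ rather than to $\kk G$ as a whole, should yield the desired splitting of the counit $\Ind_{\Lambda}^{A}\Res_{\Lambda}^{A}(N_{i})\to N_{i}$. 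Once (a) is in hand the remaining steps run as described above.
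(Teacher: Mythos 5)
Your reduction of (1) and (3) to the single claim ``every basic support $\tau$-tilting $A$-module is \textup{(}$\mod G$\textup{)}-stable'' is fine, but your route to that claim has two genuine gaps, and it bypasses the one idea that actually makes the theorem work. First, step (b) does not follow from step (a): to transfer $\tau$-rigidity from $N$ to $\Res_{\Lambda}^{A}(N)$ you must compute
$\Hom_{\Lambda}(R(N),\tau R(N))\cong\Hom_{\Lambda}(R(N),R\tau(N))\cong\Hom_{A}(FR(N),\tau N)$,
and the vanishing of the right-hand side requires the \emph{reverse} containment $FR(N)\in\add N$ — which, for rigid $N$, is precisely the \textup{(}$\mod G$\textup{)}-stability you are trying to prove (Proposition \ref{202309150913}). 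Knowing only $N\in\add FR(N)$ is not enough: in Example \ref{202309271835}(I) one has $N\in\add FR(N)$ for every $N$ (since $\chara\kk\nmid|G|$), yet $N=S_{2}$ is support $\tau$-tilting while $R(S_{2})$ is the non-$\tau$-rigid simple over $\kk[x]/(x^{2})$. Second, your plan for step (a) rests on the assertion that the top of each indecomposable summand $N_{i}$ is concentrated at a single $G$-orbit of $\sfE$; this is false in general (indecomposable modules routinely have tops supported at several vertices), and even for the part of $N_{i}$ over one orbit, splitting the counit $FR(N_{i})\to N_{i}$ amounts to showing $S\otimes_{\kk}^{G}N_{i}\in\add N_{i}$ for the composition factors $S$ of $\kk G$ — again the statement to be proved. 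Note also that your argument for assertion (2) never uses the hypothesis $|(\sttilt\Lambda)^{G}|<\infty$, which is a warning sign: that hypothesis is essential to the paper's proof of (2), not merely to deducing (1) and (3).

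The missing idea is a global mutation argument rather than a local analysis of a single $N$. The paper first shows (Lemma \ref{202309301224}) that $A^{\basic}\cong\bigoplus_{e\in\sfF}Ae$ is \emph{ind}-\textup{(}$\mod G$\textup{)}-stable, and then proves (Lemma \ref{202309301439}, using the adjunction $(X\otimes_{\kk}^{G}-,\tuD(X)\otimes_{\kk}^{G}-)$ of Proposition \ref{2023010031837} to show that $X\otimes_{\kk}^{G}f$ remains a minimal left approximation) that irreducible left mutation preserves ind-\textup{(}$\mod G$\textup{)}-stability. Hence the set $\mu^{-\NN}_{\bullet}(A)$ of iterated left mutations of $A^{\basic}$ is contained in $(\sttilt A)^{\mod G}$, which is finite by Theorem \ref{202309301601} and the hypothesis on $(\sttilt\Lambda)^{G}$; by \cite[Corollary 2.38]{AIR} a finite such mutation class must exhaust $\sttilt A$, giving $(\sttilt A)^{\mod G}=\sttilt A$. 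If you want to salvage your outline, you should replace steps (a)–(c) by this induction along the Hasse quiver; the orbit-wise splitting argument cannot be made to work as stated.
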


To prove this theorem, we introduce the notion of ind-\textup{(}$\mod G$\textup{)}-stability.

\begin{definition}\label{202403311650}
A   support $\tau$-tilting $A$-module $N$ is said to be  
\emph{ind-\textup{(}$\mod G$\textup{)}-stable} if  each indecomposable direct summand is \textup{(}$\mod G$\textup{)}-stable. 
Namely, 
if  $N=\bigoplus_{i=1}^{n} N_{i}$ is  a decomposition into indecomposable modules $N_{i}$, 
then each $N_{i}$ is \textup{(}$\mod G$\textup{)}-stable. 

We note the  condition is equivalent to  $X \otimes_{\kk}^{G} N_{i} \cong N_{i}^{\oplus \dim X}$ for 
any $X \in \mod G$. 
\end{definition}

  A key step is the following lemma that states that ind-\textup{(}$\mod G$\textup{)}-stability of support $\tau$-tilting modules 
  is preserved by irreducible left mutations.

\begin{lemma}\label{202309301439}
If $N=\bigoplus_{i=1}^{n} N_{i}$ is a basic  ind-\textup{(}$\mod G$\textup{)}-stable support $\tau$-tilting $A$-module with decompostion into  indecmoposable direct summand  $N_{i}$. 
Then  for any $i=1,\dots, n$, the irreducible left mutation $\mu_{i}^{-}(N)$  is also ind-\textup{(}$\mod G$\textup{)}-stable. 
\end{lemma}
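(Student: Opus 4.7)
The plan is to apply the functor $S \otimes_{\kk}^{G} -$ for a simple $G$-module $S$ to the exchange sequence of the irreducible left mutation, and to control the resulting cokernel by means of a dimension count.

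\medskip
First, since the summands of $N/N_i$ are already $(\mod G)$-stable by hypothesis, it suffices to prove that the new indecomposable summand $N_i^{*}$ of $\mu_i^-(N) = (N/N_i) \oplus N_i^{*}$ is $(\mod G)$-stable (the support-reducing case $N_i^{*}=0$ is vacuous). Because $N_i^{*}$ is rigid as a summand of a support $\tau$-tilting module, Proposition \ref{202309150913} reduces the problem to showing $S \otimes_{\kk}^{G} N_i^{*} \in \add N_i^{*}$ for every simple $G$-module $S$; since $N_i^{*}$ is indecomposable, this amounts to $S \otimes_{\kk}^{G} N_i^{*} \cong (N_i^{*})^{\oplus \dim S}$.

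\medskip
Next, I take the standard exchange sequence
\[
N_i \xrightarrow{f} N_i' \longrightarrow N_i^{*} \longrightarrow 0
\]
with $f$ a minimal left $\add(N/N_i)$-approximation, and apply the exact functor $S \otimes_{\kk}^{G} -$. By ind-$(\mod G)$-stability, $S \otimes_{\kk}^{G} N_j \cong N_j^{\oplus \dim S}$ for every indecomposable summand $N_j$ of $N$, so $S \otimes_{\kk}^{G} N_i \cong N_i^{\oplus \dim S}$ and $S \otimes_{\kk}^{G} N_i' \in \add(N/N_i)$. Using the adjunction $(S \otimes_{\kk}^{G} -,\ \tuD(S) \otimes_{\kk}^{G} -)$ of Proposition \ref{2023010031837} combined with ind-stability (which ensures $\tuD(S) \otimes_{\kk}^{G} C \in \add(N/N_i)$ for every $C \in \add(N/N_i)$), one verifies that $S \otimes f$ is itself a left $\add(N/N_i)$-approximation of $N_i^{\oplus \dim S}$.

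\medskip
The crucial cokernel comparison goes as follows. The direct-sum map $f^{\oplus \dim S}$ is a (possibly non-minimal) left $\add(N/N_i)$-approximation of $N_i^{\oplus \dim S}$ with cokernel $(N_i^{*})^{\oplus \dim S}$, because cokernels commute with finite direct sums. Since $N_i^{*}$ is indecomposable and not a summand of $N/N_i$, the module $(N_i^{*})^{\oplus \dim S}$ has no direct summand in $\add(N/N_i)$; by the standard decomposition of a non-minimal left approximation as the minimal one plus a split complement inside $\add(N/N_i)$, this forces the minimal-approximation cokernel of $N_i^{\oplus \dim S}$ to equal $(N_i^{*})^{\oplus \dim S}$ exactly. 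Applying the same principle to the left approximation $S \otimes f$ yields
\[
S \otimes_{\kk}^{G} N_i^{*} \;\cong\; (N_i^{*})^{\oplus \dim S} \oplus K \qquad \text{with } K \in \add(N/N_i).
\]
A dimension count $\dim_{\kk}(S \otimes_{\kk}^{G} N_i^{*}) = \dim S \cdot \dim N_i^{*}$ then collapses $K$ to $0$, and Proposition \ref{202309150913} converts this into $(\mod G)$-stability of $N_i^{*}$, completing the proof.

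\medskip
The main obstacle is the cokernel comparison in the third paragraph: one must cleanly separate the intrinsic cokernel $(N_i^{*})^{\oplus \dim S}$ from the parasitic summand $K \in \add(N/N_i)$, which relies precisely on the indecomposability of $N_i^{*}$ and the fact that $N_i^{*}\notin \add(N/N_i)$. Everything else is a routine application of exactness, adjunction, and dimension counting.
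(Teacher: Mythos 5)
Your proof is correct and follows essentially the same route as the paper's: apply the tensor functor to the exchange sequence, use the adjunction of Proposition \ref{2023010031837} together with ind-\textup{(}$\mod G$\textup{)}-stability of $N/N_{i}$ to see that the tensored map is again a left $\add(N/N_{i})$-approximation of $N_{i}^{\oplus \dim S}$, and conclude by a dimension count. The only immaterial differences are in the endgame --- you first reduce to simple $G$-modules via Proposition \ref{202309150913} and compare \emph{cokernels} through the ``minimal approximation plus split complement in $\add(N/N_{i})$'' decomposition plus Krull--Schmidt, whereas the paper keeps an arbitrary $X\in\mod G$ and compares the approximation \emph{targets} directly using left minimality of $f^{\oplus \dim X}$ --- and the cokernel of the exchange sequence is in general a direct sum of copies of $N_{i}^{*}$ rather than $N_{i}^{*}$ itself, but your argument goes through verbatim with that replacement.
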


\begin{proof}
The case $n=1$ is clear. Thus we may assume that $n\geq 2$. 
We may also assume that $i=1$. We set $N':= \bigoplus_{i=2}^{n} N_{i}$. 
Let $f: N_{1} \to L$ be a left minimal $\add N'$-approximation. 
We only have to prove that the cokernel $C:= \Coker f$ is \textup{(}$\mod G$\textup{)}-stable. 

For this purpose, we take a finite dimensional  $G$-module $X$ and show that $X\otimes_{\kk}^{G} C \cong C^{\oplus \dim X}$. 
First we claim that $X\otimes_{\kk}^{G} f$ is a left $\add N'$-approximation. 
Indeed, let $g :X \otimes_{\kk}^{G} N_{1} \to K$ be a homomorphism of $A$-modules with $K \in \add N'$. 
Recall that  the pair $(X \otimes_{\kk}^{G}-, \tuD(X) \otimes_{\kk}^{G}- )$  is an adjoint pair  by Proposition \ref{2023010031837}. 
Let $g^{\circ}: N_{1} \to \tuD(X) \otimes_{\kk}^{G} K$ be the homomorphism corresponding to $g$ via the adjoint isomorphism. 
Since $N'$ is \textup{(}$\mod G$\textup{)}-stable, we have $\tuD(X) \otimes_{\kk}^{G} K \in \add N'$. 
It follows that there exists a homomorphism $h: L \to \tuD(X) \otimes_{\kk}^{G} K$ such  that $h\circ f = g^{\circ}$. 
Let ${}^{\circ}h: X \otimes_{\kk}^{G} L \to K$ be the homomorphism corresponding to $h$ via the adjoint isomorphism. 
Then by the naturality of adjoint isomorphisms, we have ${}^{\circ}h \circ (X \otimes^{G}_{\kk} f) = g$. 
This shows that $X \otimes^{G}_{\kk} f$ is a left $\add N'$-approximation as desired. 
\[
\begin{xymatrix}@C=50pt{
X \otimes_{\kk}^{G} N_{1} \ar[r]^{X\otimes^{G}_{\kk} f} \ar[dr]_{g} & X \otimes_{\kk}^{G} L, \ar@{-->}[d]^{{}^{\circ} h } \\
 & K
 }\end{xymatrix} \ \ 
 \begin{xymatrix}@C=50pt{
N_{1} \ar[r]^{ f} \ar[dr]_{g^{\circ}} &  L \ar@{-->}[d]^{h } \\
 & \tuD(X) \otimes_{\kk}^{G} K 
 }\end{xymatrix}
\]

For simplicity, we set $d= \dim X$. We fix an isomorphism $X \otimes_{\kk}^{G} N_{1} \cong N_{1}^{\oplus d}$. 
By the above claim and the assumption that $f$ is a minimal left $\add N'$-approximation, 
we have homomorphism $s :L^{\oplus d} \to X \otimes_{\kk}^{G} L, \ t: X \otimes_{\kk}^{G} L \to L^{\oplus d}$
that compatible with $f^{\oplus d}$ and $X\otimes_{\kk}^{G} f$. 
\[
\begin{xymatrix}@C=60pt{
N_{1}^{\oplus d} \ar[d]^{\cong} \ar[r]^{f^{\oplus d}} & L^{\oplus d} \ar@<-2pt>[d]_{s} \\
X\otimes_{\kk}^{G} N_{1} \ar[r]_{X \otimes_{\kk}^{G} f} & X \otimes_{\kk}^{G} L \ar@<-2pt>[u]_{t}
}\end{xymatrix}
\]
Since $f^{\oplus d}$ is left minimal, the composition $t\circ s$ is an isomorphism. 
Moreover, as $X \otimes_{\kk}^{G} L$ and $L^{\oplus d}$ are  (non-canonically) isomorphic to each other and in particular 
have the same dimension, it follows that the morphisms $s$ and $t$ are isomorphisms. 
Now we conclude that $X \otimes_{\kk}^{G} C = \Coker(X \otimes_{\kk}^{G} f) \cong \Coker(f^{\oplus d}) \cong C^{\oplus d}$ 
as desired. 
\end{proof}

\begin{proof}[Proof of Theorem \ref{202309301357}]
Let $\sfF \subset \sfE$ be a complete set of representatives of $\sfE/G$.  
It follows from  Lemma \ref{202309301224}
that  $A^{\basic} \cong \bigoplus_{e \in \sfF} Ae$ and that it is  ind-\textup{(}$\mod G$\textup{)}-stable. 

Let $\mu^{-\NN}_{\bullet}(A)$ be the set of support $\tau$-tilting modules that are obtained from $A^{\basic}$ by  iterated left mutations. 
Applying Lemma \ref{202309301439} repeatedly, we see that 
$\mu^{-\NN}_{\bullet}(A) \subset (\sttilt A)^{\mod G}$. 
Since we are assuming 
$|(\sttilt \Lambda)^{G}|< \infty$, 
the set $(\sttilt A)^{\mod G} $ is a finite set 
by Theorem \ref{202309301601}.
It follows that the set $\mu^{-\NN}_{\bullet}(A)$ is also finite. 
Thus, by \cite[Corollary 2.38]{AIR}, we have $\mu^{-\NN}_{\bullet}(A) = \sttilt A$. 
Consequently we have $(\sttilt A)^{\mod G} =\sttilt A$. 
\end{proof}

\subsection{Complements}

\subsubsection{}\label{202407071658}

The aim of Section \ref{202407071658} is to show that if $(M,P)$ is a support $\tau$-tilting pair over $\Lambda$ such that $M$ is $G$-stable, 
then the pair $(F(M), F(P))$ is a support $\tau$-tilting pair over $A$ such that $F(M)$ is \textup{(}$\mod G$\textup{)}-stable.
Moreover, we study a converse problem. 
As is shown in  Example \ref{202310111303} below, 
even if a support $\tau$-tilting $\Lambda$-module $M$ is not $G$-stable, 
the $A$-module $F(M)$ possibly becomes a support $\tau$-tilting $A$-module.
However, the next proposition shows that a support $\tau$-tilting $\Lambda$-modules with the complement $P$ 
is $G$-stable if and only if the pair $(F(M), F(P))$ is a support $\tau$-tilting pair over $A$.

\begin{proposition}\label{202310061821}

Let $(M, P)$ be a support $\tau$-tilting pair for $\Lambda$. 
Then the following conditions are equivalent. 

\begin{enumerate}[(a)]

\item $M$ is $G$-stable. 

\item The pair $(F(M), F(P))$ is a support $\tau$-tilting pair for $A$.
\end{enumerate}

\end{proposition}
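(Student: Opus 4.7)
The plan is to prove the two implications separately; in both directions the count of indecomposable summands will be controlled via the poset isomorphism of Theorem~\ref{202309301601}. Throughout I will use that $F$ and $R$ preserve projectives, the biadjunction \eqref{202309151217}, Lemma~\ref{202309151023}, and that for any module $X$ the count $|X|$ of non-isomorphic indecomposable summands coincides with $|X^{\basic}|$.

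For (a)$\Rightarrow$(b), I would first invoke Theorem~\ref{202309151712}(1) to obtain that $F(M)$ is support $\tau$-tilting. Then $F(P)$ is projective, and the orthogonality
\[
\Hom_{A}(F(P),F(M))\cong \Hom_{\Lambda}\!\Bigl(P,\bigoplus_{g\in G}{}_{g}M\Bigr)\cong \Hom_{\Lambda}(P,M)^{\oplus|G|}=0
\]
follows by adjunction, Lemma~\ref{202309151023}(1b), and the $G$-stability $\,{}_{g}M\cong M$. To establish the count $|F(M)|+|F(P)|=|A^{\basic}|$, I will apply Theorem~\ref{202309301601} to obtain the basic complement $P'$ of $F(M)^{\basic}$, and then show $F(P)^{\basic}=P'$. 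The inclusion $\add F(P)\subseteq\add P'$ follows from maximality of $P'$; for the reverse, adjunction gives $R(P')$ projective with $\Hom_{\Lambda}(R(P'),M)=0$, so $R(P')\in\add P$ and hence $FR(P')\in\add F(P)$. A short calculation via Proposition~\ref{2023010031837} shows that $P'$ is itself $(\mod G)$-stable (the twist $X\otimes_{\kk}^{G} P'$ remains orthogonal to $F(M)$ by adjunction and the $(\mod G)$-stability of $F(M)$), and then Proposition~\ref{202309150913} applied to the projective (hence rigid) module $P'$ gives $P'\in\add FR(P')\subseteq\add F(P)$, completing the equality $F(P)^{\basic}=P'$.

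For the converse (b)$\Rightarrow$(a), the hypothesis implies that $F(M)$ is a support $\tau$-tilting $A$-module, which by Corollary~\ref{202309151718}(1) is automatically $(\mod G)$-stable. Setting $\tilde{M}:=R(F(M))^{\basic}$ with basic complement $\tilde{P}$, Theorem~\ref{202309301601} supplies a $G$-stable support $\tau$-tilting pair $(\tilde{M},\tilde{P})$ satisfying $F(\tilde{M})^{\basic}=F(M)^{\basic}$. Since $M$ is a direct summand of $RF(M)=\bigoplus_{g}{}_{g}M$ and hence of $\tilde{M}$, one has $\add\tilde{P}\subseteq\add P$; write $P=\tilde{P}\oplus Q'$ in basic form. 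The two pair identities combined with $|F(M)|=|F(M)^{\basic}|=|F(\tilde{M})^{\basic}|=|F(\tilde{M})|$ force $|F(P)|=|F(\tilde{P})|$, and the problem reduces to showing $Q'=0$.

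The main obstacle is this final step. The key observation will be that the $G$-stability of $\tilde{P}$ makes $\add\tilde{P}$ closed under the twist functors ${}_{g}(-)$, so any indecomposable summand $Q_{0}$ of $Q'$ has its entire $G$-orbit disjoint from $\add\tilde{P}$. For any indecomposable $A$-summand $N$ of $F(Q_{0})$, Lemma~\ref{202309151023}(1b) places $R(N)$ in $\add\bigoplus_{g}{}_{g}Q_{0}$; were $N\in\add F(\tilde{P})$, then $R(N)\in\add RF(\tilde{P})=\add\tilde{P}^{\oplus|G|}=\add\tilde{P}$ would put an indecomposable summand of $R(N)$ simultaneously in the $G$-orbit of $Q_{0}$ and in $\add\tilde{P}$, a contradiction. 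Hence $F(Q_{0})$ contributes indecomposable summands to $F(P)$ that are absent from $F(\tilde{P})$, forcing $|F(P)|>|F(\tilde{P})|$ whenever $Q'\neq 0$. This contradicts $|F(P)|=|F(\tilde{P})|$, so $Q'=0$, giving $P=\tilde{P}$ and in turn $M=\tilde{M}$, which is $G$-stable.
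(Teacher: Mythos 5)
Your proof is correct, and while your (a)$\Rightarrow$(b) is essentially the paper's argument, your (b)$\Rightarrow$(a) takes a genuinely different route. For (a)$\Rightarrow$(b) the paper, like you, gets $\Hom_{A}(F(P),F(M))=0$ from adjunction plus $G$-stability and then identifies $\add F(P)$ with the complement of $F(M)$; the only real difference is your final step $P'\in\add FR(P')$, which you derive from the $(\mod G)$-stability of $P'$ via Proposition \ref{202309150913}, whereas the paper simply observes that the counit $FR(P')\to P'$ is a split surjection because $P'$ is projective --- a one-line shortcut you could adopt. For (b)$\Rightarrow$(a) the paper argues module-theoretically: by its Lemma \ref{202310061811} the module $\bigoplus_{g}{}_{g}M\cong RF(M)$ is support $\tau$-tilting, hence $\tau$-rigid, and combining $\Hom_{\Lambda}(M,\tau({}_{g}M))=0$, $\Hom_{\Lambda}({}_{g}M,\tau M)=0$ and $\Hom_{\Lambda}(P,{}_{g}M)=0$ with the maximality criterion \cite[Corollary 2.13]{AIR} gives ${}_{g}M\in\add M$ directly. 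You instead run a counting argument: you pass to the $G$-stable pair $(\tilde M,\tilde P)$ produced by Theorem \ref{202309301601}, compare the numbers of indecomposable summands of the two complements, and rule out an extra summand $Q'$ by a $G$-orbit analysis of projectives. Both rest on the same underlying machinery (Theorem \ref{202309151712} and hence the $\tau$-compatibility results), but the paper's version is shorter and isolates the key module-theoretic input, while yours avoids any explicit appeal to the Auslander--Reiten translate or to the AIR maximality criterion, trading it for the already-established poset bijection plus the implication (a)$\Rightarrow$(b) and an elementary orbit/counting argument on the projective complements.
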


Via the reinterpretation given in Appendix \ref{202404011608}, 
this proposition and the proof of  Proposition \ref{202403121843} gives a generalization of \cite[Theorem 4.3]{BMM}.

We need the following lemma. 

\begin{lemma}\label{202310061811}
For a $\Lambda$-module $M$, the following conditions are equivalent. 

\begin{enumerate}[(1)]

\item $F(M)$ is a support $\tau$-tilting $A$-module.

\item $RF(M) = \bigoplus_{g \in G} {}_{g}M$ is a support $\tau$-tilting $\Lambda$-module. 

\end{enumerate}
\end{lemma}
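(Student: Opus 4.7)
The strategy is to deduce both implications directly from Theorem \ref{202309151712} by exploiting the fact that the induction--restriction pair satisfies the required stability hypotheses automatically: by Corollary \ref{202309151718}(1) the $A$-module $F(M)$ is always \textup{(}$\mod G$\textup{)}-stable, and by Corollary \ref{202309151718}(2) the $\Lambda$-module $R(N)$ is always $G$-stable, regardless of any tilting assumption. Thus whenever we wish to apply Theorem \ref{202309151712} to a module of the form $F(M)$ or $RF(M)$, the stability hypothesis is free, and only the support $\tau$-tilting part of the hypothesis must be checked.

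For $(1) \Rightarrow (2)$, I would take $N := F(M)$ in Theorem \ref{202309151712}(2): the module $F(M)$ is unconditionally \textup{(}$\mod G$\textup{)}-stable, and by hypothesis it is support $\tau$-tilting, so the theorem yields that $R(F(M)) = RF(M)$ is a support $\tau$-tilting $\Lambda$-module. The identification $RF(M) \cong \bigoplus_{g\in G} {}_gM$ is Lemma \ref{202309151023}(1)(b).

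For $(2) \Rightarrow (1)$, I would apply Theorem \ref{202309151712}(1) to the $\Lambda$-module $RF(M) = \bigoplus_{g\in G} {}_gM$; this module is $G$-stable since twisting by $h \in G$ merely re-indexes the summands (equivalently by Corollary \ref{202309151718}(2)), and it is support $\tau$-tilting by hypothesis. The conclusion is that $F(RF(M)) = FRF(M)$ is support $\tau$-tilting over $A$. To pass from $FRF(M)$ back to $F(M)$, I would invoke Lemma \ref{202309151023}(1)(c), which gives $FRF(M) \cong F(M)^{\oplus |G|}$; hence $\add FRF(M) = \add F(M)$, so the two modules share the same basic part, and $F(M)$ is itself support $\tau$-tilting.

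No serious obstacle remains: the lemma is essentially a formal corollary of Theorem \ref{202309151712}, the two unconditional stability statements in Corollary \ref{202309151718}, and the isomorphism $FRF(M) \cong F(M)^{\oplus|G|}$ from Lemma \ref{202309151023}(1)(c). The only point to keep in mind is that being support $\tau$-tilting depends only on the additive closure of a module, so the multiplicity appearing in $FRF(M) \cong F(M)^{\oplus|G|}$ is harmless.
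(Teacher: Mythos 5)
Your proposal is correct and follows essentially the same route as the paper: both directions are deduced from Theorem \ref{202309151712} using the unconditional stability statements of Corollary \ref{202309151718}, with the passage from $FRF(M)$ back to $F(M)$ handled via $\add FRF(M)=\add F(M)$ exactly as in the paper's proof.
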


\begin{proof}
(1)$\Rightarrow$(2). 
By Corollary \ref{202309151718}, $F(M)$ is \textup{(}$\mod G$\textup{)}-stable. 
The implication follows from  Theorem \ref{202309151712}. 

(2)$\Rightarrow$(1). 
By Lemma \ref{202309151023},  $RF(M)$ is $G$-stable. 
It follows from  Theorem \ref{202309151712} that  $FRF(M)$ is support $\tau$-tilting $A$-module. 
Since $\add FRF(M) = \add F(M)$ by Lemma \ref{202309151023}, we conclude that $F(M)$ is support $\tau$-tilting $A$-module. 
\end{proof}

\begin{proof}[Proof of Proposition \ref{202310061821}]
(a)$\Rightarrow$(b) 
By Theorem \ref{202309151712}, $F(M)$ is support $\tau$-tilting $A$-module. 
Let $Q$ be a projective $A$-modules such that $(F(M), Q)$ is a support $\tau$-tilting pair.

It follows from the adjoint isomorphism  $\Hom_{\Lambda}(R(Q), M) \cong \Hom_{A}(Q,F(M)) = 0$ 
that  $R(Q) \in \add P$ and hence $FR(Q) \in \add F(P)$. 
Since $Q$ is projective, the canonical surjection $FR(Q) = A \otimes_{\Lambda} Q \to Q$ splits. 
Thus we have $Q \in \add F(P)$. 

We recall the adjoint isomorphism 
\begin{equation}\label{202310061840}
\Hom_{A}(F(P), F(M) ) \cong \Hom_{ \Lambda}(P, RF(M)) \cong \Hom_{\Lambda}(P, \bigoplus_{g\in G}{}_{g}M).
\end{equation}
Now by the assumption that $M$ is $G$-stable, we have $\add (\bigoplus_{g\in G}{}_{g}M) = \add M$. 
It follows from \eqref{202310061840} that $\Hom_{A}(F(P), F(M)) = 0$ and  $F(P) \in \add Q$. 
Thus we conclude that $\add Q = \add F(P)$.

(b)$\Rightarrow$(a). 
The module $\bigoplus_{g \in G} {}_{g}M$ is support $\tau$-tilting $\Lambda$-module 
by Lemma \ref{202310061811}.  
In particular it is $\tau$-rigid. 
It follows that for any $g\in G$, we have 
\[
\Hom_{\Lambda}(M, \tau ({}_{g}M) ) = 0, \ \Hom_{\Lambda}({}_{g}M, \tau M) = 0.
\]
Moreover, since $\Hom_{A}(F(P), F(M) ) = 0$, it follows from \eqref{202310061840} 
that $\Hom_{\Lambda}(P, {}_{g}M) = 0$ for any $g \in G$. 
Thanks to \cite[Corollary 2.13]{AIR} we see that ${}_{g} M \in \add M$. 
Now it is easy to deduce that ${}_{g} M \cong M$. 
\end{proof}

We provide an example of $\Lambda$-module $M$ which is not a support $\tau$-tilting module, but $F(M)$ is 
a support $\tau$-tilting $A$-module.

\begin{example}\label{202310101649}
Let $\Lambda:= \kk Q/(ab, ba)$ where $Q$ is the following quiver  
\[
Q : \ 
\begin{xymatrix}{
1 \ar@<2pt>[r]^{a} & 2 \ar@<2pt>[l]^{b}. 
}\end{xymatrix}
\]
Let $G = \agl{g}$ be a cyclic group of order $2$ with a generator $g$. 
We give an action $G \curvearrowright \Lambda$ by 
$
g: \  1 \leftrightarrow 2, \ \  a \leftrightarrow b. 
$
Then there is an isomorphism $A := \Lambda * G \xrightarrow{\cong} M_{2}(\kk[x]/(x^{2}))$  
of algebras under which the element $e_{1}  \in A$ corresponds to the matrix unit $\begin{pmatrix} 1 & 0 \\ 0& 0 \end{pmatrix}$.

The projective $\Lambda$-module $P_{1} = \Lambda e_{1}$ is not a support $\tau$-tilting $\Lambda$-module, 
but $F(P_{1}) = A e_{1}$ is a support $\tau$-tilting $A$-module. 
\end{example}

We provide an example of a support $\tau$-tilting $\Lambda$-module $M$ which is not $G$-stable, but $F(M)$ is a support $\tau$-tilting $A$-module. 

\begin{example}\label{202310111303}
Let $\Lambda := \kk \times \kk$. We set $e_{1} :=(1_{\kk}, 0), \ e_{2} :=(0, 1_{\kk})$. 
Let $G:= \agl{g }$ be a cyclic group of order $2$ with a generator $g$. 
We give an action $G \curvearrowright \Lambda$ by setting $g(e_{1}) := e_{2}, \ g(e_{2}) := e_{1}$.
The skew group algebra $A:= \Lambda *G$ is isomorphic to the matrix algebra $M_{2}(\kk)$ of order $2$. 

%
%
%
%

It is easy to check that the simple $\Lambda$-module $S_{1}=\Lambda e_{1}$ corresponding to the vertex $1$ is a support $\tau$-tilting $\Lambda$ module  which is not $G$-stable, 
but  $F(S_{1})=A (e_{1} * e_{G})$ is a support $\tau$-tilting $A$-module.  
\end{example}

\subsubsection{}

In this section, we show that if $(N,Q)$ is a support $\tau$-tilting pair over $A$ such that $N$ is \textup{(}$\mod G$\textup{)}-stable, 
then the pair $(R(N), R(Q))$ is a support $\tau$-tilting pair over $\Lambda$ such that $R(N)$ is $G$-stable.
We  also study related problems as in the previous section. 

First we establish the following lemma which is an $R=\Res_{\Lambda}^{A}$-version of Lemma \ref{202310061811}.

\begin{lemma}\label{202310081547}
Let $N$ be  an $A$-module. 
The following conditions are equivalent
\begin{enumerate}[(1)]

\item $R(N)$ is a support $\tau$-tilting $\Lambda$-module.

\item $FR(N)  = \kk G \otimes_{\kk}^{G} N$ is a support $\tau$-tilting $A$-module. 

\end{enumerate}

\end{lemma}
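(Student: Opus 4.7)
The plan is to proceed by the same strategy used for Lemma \ref{202310061811}, which is the analogous statement with the roles of $F$ and $R$ interchanged. Everything we need is already assembled in Section \ref{202403291603}: the identifications $\add F RF(M) = \add F(M)$ and $\add RFR(N) = \add R(N)$ from Corollary \ref{202309151718}, together with the preservation statements in Theorem \ref{202309151712}, which send $G$-stable support $\tau$-tilting $\Lambda$-modules to support $\tau$-tilting $A$-modules via $F$ and \textup{(}$\mod G$\textup{)}-stable support $\tau$-tilting $A$-modules to support $\tau$-tilting $\Lambda$-modules via $R$.

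For the implication $(1)\Rightarrow(2)$, I would start from the assumption that $R(N)$ is a support $\tau$-tilting $\Lambda$-module. By Corollary \ref{202309151718}(2), $R(N)$ is automatically $G$-stable. Applying Theorem \ref{202309151712}(1) to $R(N)$ then yields that $FR(N)$ is a support $\tau$-tilting $A$-module, which is exactly the required conclusion and identifies with $\kk G \otimes_{\kk}^{G} N$ via Lemma \ref{202309151023}(2-b).

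For the converse $(2)\Rightarrow(1)$, I would start from the assumption that $FR(N)$ is a support $\tau$-tilting $A$-module. Since $FR(N)$ is of the form $F(-)$ applied to the $\Lambda$-module $R(N)$, Corollary \ref{202309151718}(1) tells us it is \textup{(}$\mod G$\textup{)}-stable. Then Theorem \ref{202309151712}(2) produces that $RFR(N)$ is a support $\tau$-tilting $\Lambda$-module. Finally, invoking the additive-closure identity $\add RFR(N) = \add R(N)$ from Corollary \ref{202309151718}(2) transfers the support $\tau$-tilting property from $RFR(N)$ back to $R(N)$, completing the equivalence.

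I do not expect any genuine obstacle here. The work has already been done by assembling the basic isomorphisms and $G$- and $\mbox{\textup{(}}\mod G\mbox{\textup{)}}$-stability criteria earlier; the present lemma is simply the formal dual of Lemma \ref{202310061811} under the swap $F\leftrightarrow R$, so the verification is essentially mechanical.
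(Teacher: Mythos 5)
Your proposal is correct and follows exactly the same route as the paper's own proof: both implications use Corollary \ref{202309151718} to establish the relevant stability, Theorem \ref{202309151712} to transfer the support $\tau$-tilting property across $F$ or $R$, and the identity $\add RFR(N)=\add R(N)$ to descend from $RFR(N)$ to $R(N)$. No gaps.
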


\begin{proof}
(1) $\Rightarrow$ (2). 
By Corollary \ref{202309151718}, $R(N)$ is $G$-stable. 
Thanks to Theorem \ref{202309151712}, $FR(N)  = \kk G \otimes_{\kk}^{G} N$ is a support $\tau$-tilting $A$-module.

(2) $\Rightarrow$ (1).  
By Corollary \ref{202309151718},  $FR(N)$ is \textup{(}$\mod G$\textup{)}-stable. 
It follows from  Theorem \ref{202309151712} that  $RFR(N)$ is a support $\tau$-tilting $\Lambda$-module. 
By Corollary \ref{202309151718}, we have $\add RFR(N) = \add R(N)$. 
Thus, we conclude that $R(N)$ is a support $\tau$-tilting $A$-module. 
\end{proof}

The following is an $R=\Res^{A}_{\Lambda}$-version of Proposition \ref{202310061821}. 
However, the two conditions (a) and (b) are not equivalent in this situation.

\begin{proposition}\label{202310081630}

Let $(N,Q)$ be a support $\tau$-tilting pair over $A$. 
 We consider the following conditions.

\begin{enumerate}[(a)]

\item $N$ is \textup{(}$\mod G$\textup{)}-stable.

\item $(R(N), R(Q))$ is a support $\tau$-tilting pair over $\Lambda$.

\end{enumerate}
Then the following assertions hold.

\begin{enumerate}[(1)]

\item The condition (a) implies (b). 

\item If one of the following conditions hold, then the condition (b) implies (a).

\begin{enumerate}[(i)]

\item $\chara \kk$ does not divide $|G|$. 

\item The group algebra $\kk G$ is local. 

\end{enumerate}
\end{enumerate}

\end{proposition}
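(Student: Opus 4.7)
The plan is to adapt the proof of Proposition \ref{202310061821}, swapping the roles of $F := \Ind_{\Lambda}^{A}$ and $R := \Res_{\Lambda}^{A}$ throughout. The Frobenius property of the extension $\Lambda \subset A$ makes $F$ both a left and a right adjoint of $R$, and I will exploit adjunction in both directions to produce the necessary Hom-vanishing and to match projective complements.

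For (1), assume (a). I first invoke Theorem \ref{202309151712}(2) to conclude that $R(N)$ is a support $\tau$-tilting $\Lambda$-module. It then remains to identify the projective complement of $R(N)^{\basic}$ as $R(Q)^{\basic}$. The inclusion $R(Q) \in \add P'$ (where $P'$ denotes the projective complement) will come from the adjunction
\[
\Hom_{\Lambda}(R(Q), R(N)) \cong \Hom_{A}(Q, FR(N))
\]
together with the observation that \textup{(}$\mod G$\textup{)}-stability gives $FR(N) = \kk G \otimes_{\kk}^{G} N \in \add N$ (Proposition \ref{202309150913}), so that the right-hand side is zero by the support $\tau$-tilting pair property of $(N,Q)$. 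For the reverse inclusion, I take an indecomposable summand $P_{0}$ of $P'$, use the other adjunction $\Hom_{A}(F(P_{0}), N) \cong \Hom_{\Lambda}(P_{0}, R(N)) = 0$ to conclude $F(P_{0}) \in \add Q$, and apply Lemma \ref{202309151023} to deduce $P_{0} \in \add RF(P_{0}) \subseteq \add R(Q)$.

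For (2), assume (b). By Lemma \ref{202310081547} the $A$-module $FR(N)$ is support $\tau$-tilting, and by Corollary \ref{202309151718}(1) it is \textup{(}$\mod G$\textup{)}-stable. Under condition (i) or (ii), Lemma \ref{202404031821} yields $N \in \add FR(N)$, so $N^{\basic}$ is a direct summand of $FR(N)^{\basic}$. Let $P''$ denote the projective complement of $FR(N)^{\basic}$. The adjunction combined with hypothesis (b) gives $\Hom_{A}(Q, FR(N)) \cong \Hom_{\Lambda}(R(Q), R(N)) = 0$, whence $Q \in \add P''$; conversely, for any indecomposable summand $P_{0}$ of $P''$, applying $\Hom_{A}(P_{0}, -)$ to the canonical surjection $FR(N) \twoheadrightarrow N$ and using $\Hom_{A}(P_{0}, FR(N)) = 0$ together with projectivity of $P_{0}$, I obtain $\Hom_{A}(P_{0}, N) = 0$, so $P_{0} \in \add Q$. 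Hence $\add Q = \add P''$, and the size identity $|N^{\basic}| + |Q^{\basic}| = |A| = |FR(N)^{\basic}| + |P''^{\basic}|$ combined with $N^{\basic} \subseteq FR(N)^{\basic}$ forces $N = FR(N)^{\basic}$, which is \textup{(}$\mod G$\textup{)}-stable.

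The main obstacle lies in invoking Lemma \ref{202404031821}(ii) under our condition (ii): that lemma literally requires $N$ to be rigid in the $\Ext^{1}$-sense, whereas a support $\tau$-tilting module is only $\tau$-rigid in general. I expect to resolve this either by a direct structural argument using that $FR(N)$ is an iterated extension of $N$ along the radical filtration of the local algebra $\kk G$ (so that $\tau$-rigidity of the support $\tau$-tilting module $FR(N)$, combined with maximality of support $\tau$-tilting pairs among $\tau$-rigid pairs, already forces $N$ to occur as a direct summand of $FR(N)^{\basic}$), or by passing through the 2-term silting complex corresponding to $(N,Q)$, which is genuinely rigid in the derived sense and thereby sidesteps the $\Ext^{1}$-rigidity hypothesis on $N$ itself.
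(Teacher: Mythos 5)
Your argument follows essentially the same route as the paper's proof. Part (1) is identical. In part (2) you reach the same counting identity $|N|+|Q|=|A|=|FR(N)|+|P''|$; the only cosmetic difference is that the paper identifies the projective complement of $FR(N)$ as $FR(Q)$ (by applying Proposition \ref{202310061821} to the $G$-stable pair $(R(N),R(Q))$) and proves $\add FR(Q)=\add Q$ directly, whereas you work with an abstract complement $P''$ and show $\add P''=\add Q$; both work. The one point you leave open --- invoking Lemma \ref{202404031821}(ii) for $N$ --- is not actually an obstacle: every $\tau$-rigid module is rigid in the $\Ext^{1}$-sense, since by the Auslander--Reiten formula $\Ext^{1}_{A}(N,N)$ is a subquotient of $\tuD\Hom_{A}(N,\tau N)$, which vanishes for a support $\tau$-tilting module $N$. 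Hence Lemma \ref{202404031821} applies verbatim (this is exactly what the paper's proof does, silently), and neither of your proposed workarounds is needed.
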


Via the reinterpretation given in Appendix \ref{202404011608}, 
this proposition and the proof of  Proposition \ref{202403121843} gives a generalization of \cite[Theorem 4.5]{BMM}.

\begin{proof}
(1) 
Assume that $N$ is \textup{(}$\mod G$\textup{)}-stable. By Theorem \ref{202309151712}, $R(N)$ is a support $\tau$-tilting $\Lambda$-module. 
Let $P$ be a projective $\Lambda$-module such that the pair $(R(N), P)$ is a support $\tau$-tilting pair for $\Lambda$. 
Then we have $\Hom_{A}(F(P), N ) \cong \Hom_{\Lambda}(P, R(N) ) =0$. It follows that $F(P) \in \add Q$ 
and hence $RF(P) \in \add R(Q)$. 
By Lemma \ref{202309151023}, $P$ is a direct summand of $RF(P)$, we have $P \in \add R(Q)$.

Since $N$ is \textup{(}$\mod G$\textup{)}-stable, we have $FR(N) \in \add N$. 
By the adjoint isomorphism, we have $\Hom_{\Lambda}(R(Q), R(N) ) \cong \Hom_{A}(Q, FR(N)) = 0$. 
It follows   that $R(Q) \in \add P$. Thus we conclude that $\add P = \add R(Q)$.

(2) 
Before assuming one of (i) or (ii) holds, we only assume that the condition (b) holds. 
Namely, 
we assume that $(R(N), R(Q))$ is a support $\tau$-tilting pair for $\Lambda$.  
Since $R(N)$ is $G$-stable, the pair $(FR(N), FR(Q))$ is a support $\tau$-tilting pair for $A$ by Proposition \ref{202310061821}. 
We claim that $\add FR(Q) = \add Q$ i.e., $Q$ is \textup{(}$\mod G$\textup{)}-stable. 
Indeed, since $Q$ is projective, the canonical surjection $FR(Q) =\kk G\otimes_{\kk}^{G} Q \to Q$ splits. 
Hence we have $Q \in \add FR(Q)$. 
On the other hand, we have $\Hom_{A}(FR(Q), N) \cong \Hom_{\Lambda}(R(Q), R(N) ) = 0$. 
It follows that $FR(Q) \in \add Q$. Thus we conclude that $\add FR(Q) = \add Q$. 

By the claim we have $|Q| = |FR(Q)|$. 
On the other hand, by a basic property of support $\tau$-tilting modules, we have $|A| = |N| +|Q| = |FR(N)| +|FR(Q)|$ . 
 It follows that  $|N| = |FR(N)|$. 

If we additionally assume  that one of (i) or (ii) holds, then we have $\add N \subset \add FR(N)$ by Lemma \ref{202404031821}. 
Thus, we conclude  $\add N = \add FR(N)$ as desired.
\end{proof}

We provide an example that shows that  the condition  (b) does not always imply the condition (a).

\begin{example}\label{202310081706}
Assume that $\chara \kk =3$. Let $G= \frkS_{3}$ be the symmetric group of degree $3$. 
We set  $\Lambda : = \kk $ with the trivial $G$ action. Then 
$A=\Lambda * G$ is the group algebra $\kk G$ and isomorphic to $\kk Q/(aba, bab)$ 
where $Q$ is a quiver given below 
\[
Q : \ 
\begin{xymatrix}{
1 \ar@<2pt>[r]^{a} & 2 \ar@<2pt>[l]^{b}. 
}\end{xymatrix}
\]
Note that we may identify  the simple modules $S_{1}, S_{2}$ corresponding to the vertices  $1,2$ with the 
trivial representation $\kk_{G}$ and the sign representation $\kk_{\sgn}$. 

Since $\kk G \otimes_{\kk}^{G} S_{2} \cong A$ as $A$-modules, 
the module   $N := S_{2} \oplus P_{2}$ is not \textup{(}$\mod G$\textup{)}-stable. 
But the pair $(N,0)$ is a supprot $\tau$-tilting pair for $A$ 
such that the pair $(R(N), R(0))$ is a support $\tau$-tilting pair for $\kk$. 
\end{example}

\section{Silting theory of skew group algebra extensions}\label{202404102225}

In this section we discuss silting theory of a skew group algebra extension $\Lambda \subset \Lambda *G$. 
For the definition of silting complexes and other basic notions and results, 
we refer \cite{AI}.

\subsection{Preliminaries}

Recall that  the induction functor $F=\Ind_{\Lambda}^{A}: \mod \Lambda \to \mod A$ and the restriction functor $R=\Res_{\Lambda}^{A}: \mod A \to \mod \Lambda$ are exact and preserve projective modules. 
Therefore, these functors $F,R$ induce
exact functors $F =\Ind_{\Lambda}^{A}: \Dbmod{\Lambda} \to \Dbmod{A}$ 
 and $R=\Res_{\Lambda}^{A} : \Dbmod{A} \to \Dbmod{\Lambda}$ of derived categories, 
 which are denoted by the same symbols, 
 and 
the functors $F$ and $R$ on the derived categories  restrict 
to exact functors  $F =\Ind_{\Lambda}^{A}: \Kbproj{\Lambda} \to \Kbproj{A}$ 
 and $R=\Res_{\Lambda}^{A} : \Kbproj{A} \to \Kbproj{\Lambda}$ of homotopy categories, 
 which are also denoted by the same symbols.

 Let $g \in G$. 
 Then the endofunctor ${}_{g}(-): \mod \Lambda \to \mod \Lambda, \ M \mapsto {}_{g}M$ is exact and preserves finitely generated projective modules. 
 Thus it gives rise to an exact endofunctor ${}_{g}(-)$ of the derived category $\Dbmod{\Lambda}$ and the homotopy category 
 $\Kbproj{\Lambda}$. 
 We define the $G$-stability of an object $M \in \Kbproj{\Lambda}$ 
 in a similar way of Definition \ref{202309151111}. 
 
 Let $X$ be a finite dimensional $G$-module. 
 Then the endofunctor $X \otimes_{\kk}^{G}-: \mod A \to \mod A$ is exact and preserves finitely generated projective modules by Corollary \ref{202310021728}. 
 Thus it gives rise to an exact endofunctor $X \otimes_{\kk}^{G} -$ of the derived category $\Dbmod{A}$ and the homotopy category 
 $\Kbproj{A}$. 
 We define the \textup{(}$\mod G$\textup{)}-stability of an object $N \in \Kbproj{A}$ 
 in a similar way of Definition \ref{202309171251}.

Moreover, all isomorphisms established in Section \ref{202403291602} and Section \ref{202403291603} 
are natural in relevant modules over $\Lambda$ or $A$, 
it follows that all the results given in these sections have their derived category version. 
Among other things we point out the following derived category version of Proposition \ref{202309150913} 
for later quotations. 
To state this, we recall that an object $N \in \Dbmod{A}$ is called \emph{rigid} if $\Hom_{\Dbmod{A}}(N,N[1]) = 0$.

\begin{proposition}\label{2023091509131}
For a rigid object $N \in\Dbmod{A}$, the following conditions are equivalent. 

\begin{enumerate}[(1)]

\item $N$ is \textup{(}$\mod G$\textup{)}-stable.

\item $S \otimes_{\kk}^{G} N \in \add N$ for any simple $G$-modules $S$.

\item $\add FR(N) = \add N$.

Moreover, if we further assume  one of the following conditions: (i) $\chara \kk \nmid |G|$, (ii) $\kk G$ is local, 
then the above conditions are equivalent to the following condition (4). 

\hspace{-38pt} (4) $FR(N) \in \add N$. 
\end{enumerate}

\end{proposition}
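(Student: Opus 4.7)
The plan is to import the proof of Proposition \ref{202309150913} verbatim into the derived setting, using the remark made just before the statement: every isomorphism in Sections \ref{202403291602} and \ref{202403291603} is natural, so the key identifications (most crucially $\kk G \otimes_{\kk}^{G} N \cong FR(N)$ from Lemma \ref{202309151023}(2-b), and $X \otimes_{\kk}^{G} FM \cong (FM)^{\oplus \dim X}$ from Lemma \ref{202309151023}(1-d)) transfer to $\Dbmod{A}$ without change. I will first upgrade Lemma \ref{202309150909} and Lemma \ref{202404031821} to the derived setting, and then the implications in the proposition follow by the exact same bookkeeping.

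For the derived analog of Lemma \ref{202309150909}, I would fix a rigid $N\in\Dbmod{A}$ and a $G$-module $X$ such that $S\otimes_{\kk}^{G}N \in \add N$ for every composition factor $S$ of $X$, and argue by induction on the length of $X$. A short exact sequence $0\to X'\to X\to S\to 0$ in $\mod G$ becomes a distinguished triangle
\[
X'\otimes_{\kk}^{G} N \longrightarrow X\otimes_{\kk}^{G} N \longrightarrow S\otimes_{\kk}^{G} N \longrightarrow (X'\otimes_{\kk}^{G} N)[1]
\]
in $\Dbmod{A}$, because $X\otimes_{\kk}^{G}-$ is exact on $\mod A$. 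By induction both outer terms lie in $\add N$, so the connecting morphism sits inside $\Hom_{\Dbmod{A}}(S\otimes_{\kk}^{G}N,(X'\otimes_{\kk}^{G}N)[1])$, which vanishes by the rigidity assumption $\Hom_{\Dbmod{A}}(N,N[1])=0$. Hence the triangle splits and $X\otimes_{\kk}^{G} N \cong (X'\otimes_{\kk}^{G}N)\oplus (S\otimes_{\kk}^{G}N) \in \add N$.

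With this lemma available, the circle of implications goes exactly as in the proof of Proposition \ref{202309150913}: (3)$\Rightarrow$(1) is Lemma \ref{202309151023}(1-d) applied inside $\Kbproj{A}$; (1)$\Rightarrow$(2) is trivial; for (2)$\Rightarrow$(3) apply the derived Lemma \ref{202309150909} to the regular $G$-module $\kk G$ to split $FR(N)\cong\kk G\otimes_{\kk}^{G}N$ as $\bigoplus S\otimes_{\kk}^{G}N$ with $S$ running over composition factors of $\kk G$, then observe that the trivial module $\kk_{G}$ is among them so $N\in\add FR(N)$, and conversely every $S\otimes_{\kk}^{G}N$ is in $\add N$ by hypothesis. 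Finally (3)$\Rightarrow$(4) is tautological, and (4)$\Rightarrow$(3) under either (i) or (ii) is the derived analog of Lemma \ref{202404031821}: in case (i) the augmentation $\kk G\twoheadrightarrow \kk_{G}$ splits as $G$-modules, hence the canonical map $FR(N)\to N$ splits in $\Dbmod{A}$; in case (ii) any $G$-module $X$ surjects onto $\kk_{G}$, and induction on length together with the triangle-splitting argument above (again using $\Hom_{\Dbmod{A}}(N,N[1])=0$) shows $N\in\add (X\otimes_{\kk}^{G}N)$, applied to $X=\kk G$.

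The main subtlety, and essentially the only nontrivial point, is the passage from short exact sequences of $G$-modules to split distinguished triangles in the derived category; this is where rigidity of $N$ is used in an essential way, and it is exactly the mechanism that replaces the $\Ext^{1}$-vanishing argument from the module-level proof of \cite[Lemma 3.6]{KK}. Once this is in hand, no new ideas beyond those of Proposition \ref{202309150913} are required.
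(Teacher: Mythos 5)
Your proposal is correct and follows exactly the route the paper intends: the paper states this proposition without a separate proof, merely asserting that the module-level argument of Proposition \ref{202309150913} (via Lemmas \ref{202309150909} and \ref{202404031821}) carries over to $\Dbmod{A}$, and your write-up supplies precisely those details, with the rigidity condition $\Hom_{\Dbmod{A}}(N,N[1])=0$ splitting the triangles obtained from short exact sequences of $G$-modules in place of the $\Ext^{1}$-vanishing used at the module level.
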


\subsection{The bijection}
The aim of this section is to 
prove that  the induction-restriction adjoint pair $(F,R)$ induces a bijection between $G$-stable silting objects and \textup{(}$\mod G$\textup{)}-stable silting complexes, 
whose restriction maps  give bijections of the sets of $2$-term silting complexes having appropriate stabilities, and the set of tilting objects having appropriate stabilities.   
For this purpose, first we prove the following  lemma.

\begin{lemma}\label{202310012156}
\begin{enumerate}[(1)]

\item 

If $S\in \sfK^{\mrb}(\Lambda\proj)$ is a $G$-stable tilting (resp. silting, $2$-term silting) object of $\sfK^{\mrb}(\Lambda\proj)$, 
then 
$F(S) \in \sfK^{\mrb}(A\proj)$ is a tilting (resp. silting, $2$-term silting) object of $\sfK^{\mrb}(A\proj)$.

\item 
If $T\in \sfK^{\mrb}(A\proj)$ is a \textup{(}$\mod G$\textup{)}-stable tilting (resp. silting, $2$-term silting) object of $\sfK^{\mrb}(A\proj)$, 
then 
$R(T) \in \sfK^{\mrb}(\Lambda\proj)$ is a tilting (resp. silting, $2$-term silting) object of $\sfK^{\mrb}(\Lambda\proj)$. 

\end{enumerate}
\end{lemma}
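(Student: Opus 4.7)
The plan is to verify each of the three defining conditions (rigidity / silting, generation of the thick closure, and concentration in two consecutive degrees) for $F(S)$ in part (1) and for $R(T)$ in part (2), using the adjunction together with the isomorphisms $RF(M) \cong \bigoplus_{g\in G}{}_{g}M$ and $FR(N) \cong \kk G \otimes_{\kk}^{G} N$ from Lemma \ref{202309151023}, as well as the Frobenius extension property $R(A) \cong \Lambda^{\oplus |G|}$ as a left $\Lambda$-module. The proofs for silting and tilting differ only in whether one controls $\Hom(-,-[i])$ for $i>0$ or for all $i\neq 0$, and the $2$-term case is immediate because both $F$ and $R$ are exact and send projectives to projectives.

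For part (1), assume $S$ is $G$-stable silting. To show rigidity of $F(S)$, I would use the adjunction
\[
\Hom_{\Kbproj{A}}(F(S),F(S)[i]) \;\cong\; \Hom_{\Kbproj{\Lambda}}(S,RF(S)[i])
\]
and combine it with $RF(S) \cong \bigoplus_{g\in G}{}_{g}S \cong S^{\oplus|G|}$, which holds because $S$ is $G$-stable; this reduces the vanishing for $i>0$ (resp.\ for all $i\neq 0$ in the tilting case) to the corresponding vanishing for $S$. For generation, $F$ being triangulated sends $\thick(S) = \Kbproj{\Lambda}$ into $\thick(F(S))$, and in particular $A = F(\Lambda)$ lies in $\thick(F(S))$, which forces $\thick(F(S)) = \Kbproj{A}$. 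The $2$-term condition is preserved by $F$ since $F$ is exact and sends projective $\Lambda$-modules to projective $A$-modules.

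For part (2), assume $T$ is \textup{(}$\mod G$\textup{)}-stable silting. The rigidity of $R(T)$ follows from
\[
\Hom_{\Kbproj{\Lambda}}(R(T),R(T)[i]) \;\cong\; \Hom_{\Kbproj{A}}(FR(T),T[i]),
\]
together with the fact that $FR(T) \cong \kk G \otimes_{\kk}^{G} T$ belongs to $\add T$ by \textup{(}$\mod G$\textup{)}-stability, so the right-hand side is a direct summand of $\Hom(T^{n},T[i])$ for some $n$ and vanishes for $i>0$ (resp.\ for all $i\neq 0$). For generation, I would use that $R$ is triangulated, so $\thick(R(T))$ contains $R(A) \cong \Lambda^{\oplus |G|}$ (from the freeness of $A$ over $\Lambda$); hence $\Lambda \in \thick(R(T))$ and $\thick(R(T)) = \Kbproj{\Lambda}$. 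Again, exactness of $R$ together with the fact that it sends projective $A$-modules to projective $\Lambda$-modules handles the $2$-term case.

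There is no serious obstacle here: the preparatory material in Section \ref{202403291603} has done essentially all the work, reducing the lemma to a bookkeeping exercise combining adjunction with the two key isomorphisms $RF \cong \bigoplus_{g}{}_{g}(-)$ and $FR \cong \kk G \otimes_{\kk}^{G} (-)$. The only mildly subtle point is making sure the tilting (negative $\Ext$) vanishings transfer symmetrically, but since the identifications $RF(S) \cong S^{\oplus|G|}$ and $FR(T) \in \add T$ are isomorphisms (resp.\ inclusions of additive closures) that do not depend on the sign of $i$, the same argument works uniformly for silting, tilting and $2$-term silting.
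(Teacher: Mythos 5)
Your proposal is correct and follows essentially the same route as the paper: generation via $A=F(\Lambda)\in\thick F(S)$ (resp.\ $\Lambda$ being a direct summand of $R(A)$), rigidity via the adjunction combined with $RF(S)\in\add S$ from $G$-stability (resp.\ $FR(T)\in\add T$ from \textup{(}$\mod G$\textup{)}-stability), and the $2$-term case from exactness and preservation of projectives. The paper's own write-up is just terser, leaving part (2) as ``a similar argument.''
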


\begin{proof}
(1) 
Since $\Lambda \in \sfK^{\mrb}(\Lambda\proj) = \thick S$, we have $A=F(\Lambda) \in \thick F(S)$. 
It follows that  $\thick F(S)=\sfK^{\mrb}(A\proj)$.  

For $n \in \ZZ$, we have
\begin{equation}\label{202403301545}
\Hom_{\sfK^{\mrb}(A\proj)}(F(S),F(S)[n]) \cong 
\Hom_{\sfK^{\mrb}(\Lambda \proj)}(S,RF(S)[n]).  
\end{equation}
Since $S$ is $G$-stable, we have $RF(S) \in \add S$ by (a derived category version of) Corollary \ref{202310021718}. 
It follows from the assumption that  $S$ is tilting (resp. silting) that 
 $\Hom_{\sfK^{\mrb}(A\proj)}(F(S),F(S)[n]) = 0$ for $n\neq 0$ (resp. $n >0$).
Now we have shown that if $S$ is a tilting (resp. silting) object, then so is $F(S)$.

Finally, since the functor $F$ preserves projective modules, it follows that if $S$ is a $2$-term silting complex, then so is $F(S)$.

(2) can be  proved in a similar argument by using  Proposition \ref{2023091509131} 
and the fact that $R(A)$ contains $\Lambda$ as a direct summand. 
%
\end{proof}

Let $\silt \Lambda$ be the poset (of isomorphism classes) of basic silting complexes over $\Lambda$.
We denote by $(\silt \Lambda)^{G}$ the subposet of $G$-stable basic silting complexes. 
We also introduce  
 $\2silt \Lambda$ and $(\2silt \Lambda)^{G}$  (resp. $\tilt \Lambda$ and $(\tilt \Lambda)^{G}$) the poset of $2$-term silting complexes 
 (resp. tilting complexes) and the subposet of $G$-stable ones.

 In a similar way, we define $(\silt A)^{\mod G}, (\2silt A)^{\mod G}, (\tilt A)^{\mod G}$ 
 the poset of  \textup{(}$\mod G$\textup{)}-stable silting complexes, $2$-term silting complexes and tilting complexes.

 We remark that since we are not assuming that our algebras $\Lambda, A$ are  basic,  strictly speaking 
 the silting complex $\Lambda$ (resp. $A$) does not necessarily belong to $\silt \Lambda$ (resp. $\silt A$). 
 However, to simplify the notation, we often denote $\Lambda$ and $A$ to denote the basic silting complexes $\Lambda^{\basic}$ 
 and $A^{\basic}$.

\begin{theorem}\label{202403121806}
The adjoint pair $(F=\Ind_{\Lambda}^{A},R=\Res_{\Lambda}^{A})$ induces  isomorphisms of posets
\[
F=\Ind_{\Lambda}^{A}: (\silt \Lambda)^{G} \longleftrightarrow (\silt A)^{\mod G}: \Res_{\Lambda}^{A} =R. 
\]
Moreover, the restrictions of these maps give isomorphisms  
$F: (\2silt \Lambda)^{G} \leftrightarrow (\2silt A)^{\mod G}:R$ and $ F: (\tilt \Lambda)^{G} \leftrightarrow (\tilt A)^{\mod G}:R$. 
\end{theorem}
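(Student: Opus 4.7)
The plan is to show that $F = \Ind^{A}_{\Lambda}$ and $R = \Res_{\Lambda}^{A}$ restrict to mutually inverse poset morphisms between the indicated posets of silting objects, and then to derive the analogous statements for $2$-term silting and tilting by restriction.

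First I would establish well-definedness and bijectivity at the level of sets. Lemma \ref{202310012156} already guarantees that $F$ (resp. $R$) sends $G$-stable (resp. \textup{(}$\mod G$\textup{)}-stable) silting, $2$-term silting, and tilting complexes to the expected classes; combined with the derived version of Corollary \ref{202309151718}, which states that $F(S)$ is always \textup{(}$\mod G$\textup{)}-stable and $R(T)$ is always $G$-stable, this produces maps between the stability-decorated posets, sending each complex to its basic part. That these maps are mutually inverse on basic parts follows from the two stability criteria: for $G$-stable $S$, the derived version of Corollary \ref{202310021718} gives $\add RF(S) = \add S$, hence $(RF(S))^{\basic} \cong S^{\basic}$; and for a \textup{(}$\mod G$\textup{)}-stable silting $T$ (which is rigid), Proposition \ref{2023091509131} gives $\add FR(T) = \add T$, hence $(FR(T))^{\basic} \cong T^{\basic}$.

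The central computation will be to verify that the bijection preserves the silting order $S_1 \geq S_2 \Leftrightarrow \Hom(S_1, S_2[i]) = 0$ for all $i > 0$, in both directions. On the $\Lambda$-side, using the $G$-stability of $S_2$ to rewrite $RF(S_2) \cong \bigoplus_{g \in G} {}_g S_2 \cong S_2^{\oplus |G|}$ (by the derived form of Lemma \ref{202309151023}(1-b)), the $(F,R)$-adjunction yields
\[ \Hom_{\Kbproj{A}}(F(S_1), F(S_2)[i]) \cong \Hom_{\Kbproj{\Lambda}}(S_1, S_2[i])^{\oplus |G|}, \]
so the relevant vanishings transfer in both directions. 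On the $A$-side, using that $F$ is also right adjoint to $R$ (see (\ref{202309151217})) and the identity $\add FR(T_2) = \add T_2$ for a \textup{(}$\mod G$\textup{)}-stable silting $T_2$,
\[ \Hom_{\Kbproj{\Lambda}}(R(T_1), R(T_2)[i]) \cong \Hom_{\Kbproj{A}}(T_1, FR(T_2)[i]) \]
vanishes iff $\Hom(T_1, T_2[i])$ vanishes. Combining the two gives that $F$ and $R$ are mutually inverse poset isomorphisms.

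The restrictions to the $2$-term silting and tilting posets are essentially automatic: both $F$ and $R$ are exact and preserve finitely generated projectives, so they transfer the condition of being concentrated in two consecutive cohomological degrees, and the additional tilting condition $\Hom(-,-[i]) = 0$ for $i < 0$ is detected and preserved by exactly the same adjunction argument used above. I expect the main bookkeeping obstacle to be keeping straight the interplay between $G$-stability (controlled by $RF(-) \cong \bigoplus_g {}_g(-)$) and \textup{(}$\mod G$\textup{)}-stability (controlled by $FR(-) \cong \kk G \otimes_{\kk}^{G} -$); once Lemma \ref{202309151023} and Proposition \ref{2023091509131} are properly invoked, the rest is formal.
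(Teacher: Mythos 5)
Your proposal is correct and follows essentially the same route as the paper: well-definedness via Lemma \ref{202310012156} (together with the derived form of Corollary \ref{202309151718} for the stability of the images), mutual inverseness via the derived form of Corollary \ref{202310021718} and Proposition \ref{2023091509131}, and order-preservation via the adjunction isomorphisms as in \eqref{202403301545}. Your write-up merely spells out the two adjunction computations that the paper leaves implicit, so no further comment is needed.
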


\begin{proof}
Thanks to Lemma \ref{202310012156} the map 
$F:  (\silt \Lambda)^{G} \to (\silt A)^{\mod G}, S \mapsto F(S)^{\basic}$ and 
$R: (\silt A)^{G} \to (\silt \Lambda)^{G}, T \mapsto R(T)^{\basic}$ are well defined. 
By (a derived category version of) Corollary \ref{202310021718} and Proposition \ref{2023091509131}, 
we see that these maps are inverse to each other. 
It  also follows from Lemma \ref{202310012156} that restrictions of these maps 
give bijections $F: (\2silt \Lambda)^{G} \leftrightarrow (\2silt A)^{\mod G}:R$ and $ F: (\tilt \Lambda)^{G} \leftrightarrow (\tilt A)^{\mod G}:R$.

Recall that for  two silting complexes $S_{1}, S_{2}$, the inequality $S_{1} \geq S_{2}$ is defined by the 
equality $\Hom(S_{1}, S_{2}[n]) = 0  \ ( \forall n > 0)$. 
Using adjoint isomorphisms as in \eqref{202403301545} and (a derived category version of) Corollary \ref{202310021718} and Proposition \ref{2023091509131}, 
we can check that the above maps are poset morphisms. 
\end{proof}

\subsection{Correspondence between $2$-term silting complexes and support $\tau$-tilting modules}\label{202403121920}

Recall that taking the $0$-th chomology group $\tuH^{0}(S)$ of a $2$-term silting complex $S$ gives a bijection 
$\tuH^{0}: \2silt \Lambda \to \sttilt \Lambda$.

\begin{proposition}\label{202403121843}

\begin{enumerate}[(1)]

\item 
The  bijection 
$\tuH^{0}: \2silt \Lambda \to \sttilt \Lambda$ restricts  to  a bijection 
$\tuH^{0}: (\2silt \Lambda)^{G} \to (\sttilt \Lambda)^{G}$.

\item 
The  bijection 
$\tuH^{0}: \2silt A \to \sttilt A$  restricts to a bijection 
$\tuH^{0}: (\2silt A)^{\mod G} \to (\sttilt A)^{\mod G}$.

\end{enumerate}

\end{proposition}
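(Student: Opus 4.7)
The plan is to establish each assertion by showing that the general AIR bijection $\tuH^{0}$ restricts correctly, exploiting that the relevant stability-inducing endofunctors---namely ${}_{g}(-)$ on $\mod \Lambda$ for part (1), and $X \otimes_{\kk}^{G} -$ on $\mod A$ for part (2)---are $\kk$-linear exact functors that preserve projectives, hence lift to endofunctors of $\Kbproj{\Lambda}$ and $\Kbproj{A}$ commuting with $\tuH^{0}$. In both cases the forward implications are then immediate: if ${}_{g}S \cong S$ (resp.\ $X \otimes_{\kk}^{G} S \in \add S$) for every $g \in G$ (resp.\ every $X \in \mod G$), then applying $\tuH^{0}$ and using its additivity yields ${}_{g}\tuH^{0}(S) \cong \tuH^{0}(S)$ (resp.\ $X \otimes_{\kk}^{G} \tuH^{0}(S) \in \add \tuH^{0}(S)$), which are exactly the required stability conditions on $\tuH^{0}(S)$.

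For the reverse implication in (1), I plan to upgrade the observation: the twist functor ${}_{g}(-)$ is in fact an exact \emph{autoequivalence} of $\mod \Lambda$ with inverse ${}_{g^{-1}}(-)$, and it preserves projectives because ${}_{g}\Lambda \cong \Lambda$ as $\Lambda$-modules (cf.\ Section~\ref{202403291718}). Hence it induces an autoequivalence of $\Kbproj{\Lambda}$ that sends basic $2$-term silting complexes to basic $2$-term silting complexes. Thus, if $M := \tuH^{0}(S)$ is $G$-stable, then ${}_{g}S$ is a basic $2$-term silting complex whose $\tuH^{0}$-image equals ${}_{g}M \cong M = \tuH^{0}(S)$, and the injectivity of the AIR bijection $\tuH^{0}: \2silt \Lambda \to \sttilt \Lambda$ forces ${}_{g}S \cong S$, proving $G$-stability of $S$.

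For the reverse implication in (2), the preceding strategy fails because $X \otimes_{\kk}^{G} -$ is \emph{not} an autoequivalence of $\Kbproj{A}$ in general; this is the main obstacle. My plan is to sidestep it by a detour through the $\Lambda$-side, using Theorems~\ref{202309301601} and~\ref{202403121806}, both of which have already been proved. Concretely, given $M \in (\sttilt A)^{\mod G}$, let $M' := R(M)^{\basic}$; by Theorem~\ref{202309301601} one has $M' \in (\sttilt \Lambda)^{G}$ and $F(M')^{\basic} = M$. Applying part (1) to the $2$-term silting complex $S' \in \2silt \Lambda$ with $\tuH^{0}(S') = M'$, one obtains $S' \in (\2silt \Lambda)^{G}$, and then Theorem~\ref{202403121806} places $F(S')^{\basic}$ in $(\2silt A)^{\mod G}$. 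Since $F$ is exact, it commutes with $\tuH^{0}$ and with the formation of basic parts, so one computes $\tuH^{0}(F(S')^{\basic}) = F(M')^{\basic} = M = \tuH^{0}(S)$; the AIR bijection on $A$ then forces $S \cong F(S')^{\basic}$, and the $(\mod G)$-stability of the latter yields $(\mod G)$-stability of $S$.
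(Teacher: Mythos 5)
Your proposal is correct, but both reverse implications are argued quite differently from the paper. For (1), you exploit that ${}_{g}(-)$ is an exact autoequivalence of $\mod\Lambda$ preserving projectives, hence induces an autoequivalence of $\Kbproj{\Lambda}$ commuting with $\tuH^{0}$, and you conclude ${}_{g}S\cong S$ from the injectivity of the AIR bijection; the paper instead works with the explicit inverse $\tuK(M)=P_{M}\oplus P[1]$ and verifies $\add RF(\tuK(M))=\add\tuK(M)$ by hand, using that $RF$ preserves minimal projective presentations (Corollary \ref{202309180059}) and an adjunction argument for the $P[1]$ summand, then invokes the criterion of Corollary \ref{202310021718}. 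Your argument for (1) is arguably cleaner. For (2), you correctly identify that the autoequivalence trick fails and detour through Theorems \ref{202309301601} and \ref{202403121806}; the paper instead repeats its explicit construction on the $A$-side, using Proposition \ref{2023091512311}(2) (preservation of minimal projective presentations by $FR$ when $N\in\add FR(N)$) and the split surjection $FR(Q)\to Q$. The trade-off is significant in context: your route for (2) depends on Theorem \ref{202309301601}, which rests on Theorem \ref{202309151712}, whereas the paper deliberately keeps Proposition \ref{202403121843} independent of Theorem \ref{202309151712} precisely so that it can be fed into the ``second proof'' of that theorem in Section \ref{202403121920} (see the Remark following the paper's proof, which flags exactly this danger). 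So your proof of (2) is valid as a standalone argument granting the first proof of Theorem \ref{202309151712}, but it could not replace the paper's proof without collapsing the second proof into a circle. A minor point: $F$ does not literally commute with taking basic parts (e.g.\ $F(\Lambda)=A$ need not be basic); what you need, and what does hold, is $\add F(S'^{\basic})=\add F(S')$ together with the fact that $\tuH^{0}$ of a basic $2$-term silting complex is basic, so the computation $\tuH^{0}(F(S')^{\basic})=F(M')^{\basic}$ goes through.
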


\begin{proof}
(1) 
We denote the inverse map of $\tuH^{0}:\2silt \Lambda \to \sttilt \Lambda$ by $\tuK: \sttilt \Lambda \to \2silt\Lambda$. 
It is clear that if $S$ is a $G$-stable $2$-term silting complex, then $\tuH^{0}(S)$ is $G$-stable. 
Thus, we only have to show that if $M$ is a $G$-stable support $\tau$-tilting module, then the $2$-term silting complex $\tuK(M)$ is $G$-stable. 

Recall $\tuK(M)$ is constructed in the following way. 
Let $P$ be a complement projective module of $M$ and $P^{-1} \to P^{0} \to M \to 0$ a minimal projective presentation.   
For simplicity we set $P_{M}:=  [P^{-1} \to P^{0}]$. 
Then  $\tuK(M) := P_{M}  \oplus P[1]$.

By Corollary \ref{202309180059}, the complex $RF(P_{M})$ gives  a minimal projective  presentation of $RF(M)$. 
Since $\add RF(M) = \add M$, we have $\add RF(P_{M})=\add P_{M}$ in $\sfK^{\mrb}(\Lambda\proj)$. 
Next note that $RF(P)$ is projective $\Lambda$-module. 
Since the pairs $(R,F)$ and $(F,R)$ are adjoint pairs, we have $\Hom_{\Lambda}(RF(P), M) \cong \Hom_{\Lambda}(P,RF(M))=0$. 
It follows $RF(P) \in \add P$. 
On the other hand, we have $P \in \add RF(P)$ by Lemma \ref{202309151023}. 
It follows that $\add RF(P) = \add P$. 
Combining above two observations we conclude that $\add RF \tuK(M) = \add \tuK(M)$.

(2) is proved in a similar way by using Proposition \ref{2023091512311}(2) and the fact that the canonical morphism $RF(Q) \to Q$ is surjective.
\end{proof}

\begin{remark}
The equality $\add RF(P) = \add P$ in the above proof immediately follows from 
 Propositions \ref{202310061821} and \ref{202310081630}. 
 However, these two propositions  rely on Theorem \ref{202309151712}. 
Therefore, if we use these two propositions in the proof, we can not use Proposition \ref{202403121843} to the second proof of the theorem given below. 
\end{remark}

\begin{proof}[The second proof of Theorem \ref{202309151712}]
(1) Let $M$ be a $G$-stable support $\tau$-tilting $\Lambda$-module. 
Then $\tuH^{0}F\tuK(M)$ is a \textup{(}$\mod G$\textup{)}-stable support $\tau$-tilting $A$-module 
by Theorem \ref{202403121806} and Proposition \ref{202403121843}. 
\[
(\sttilt \Lambda)^{G} \xrightarrow{ \ \tuK \ } (\2silt \Lambda)^{G} \xrightarrow{ \ F \ } (\2silt A)^{\mod G} \xrightarrow{ \ \tuH^{0} \ } (\sttilt A)^{\mod G}.
\]
On the other hand, it is straightforward to check that $\tuH^{0}F\tuK(M) = F(M)$. 
This finishes the proof of (1). 
The second statement is proved in a similar way.
\end{proof}

\subsection{Silting discreteness}

We  introduce the notion of $G$-stable silting discreteness. 

\begin{definition}\label{202403311837}
An algebra $\Lambda$ with a $G$-action  is called 
\emph{$G$-stable silting discrete} 
if for any $n \geq 0$, there are only a finite number of $G$-stable silting complexes $S$ over $\Lambda$ 
such that $\Lambda[n]\leq S \leq \Lambda$. 
In other words, we have $| (\silt \Lambda)^{G} \cap [\Lambda[n],\Lambda]|< \infty$ 
where 
\[
[\Lambda[n], \Lambda]:= \{ S\in \silt \Lambda\mid \Lambda[n] \leq S \leq \Lambda\}.
\]
\end{definition}

We provide a silting version of Theorem \ref{202309301357}. 

\begin{theorem}\label{202403311841}
Let $G$ be a finite  group and  $\Lambda$  an algebra with $G$-action that preserves idempotent elements and has  local stabilizers. 
Assume that $\Lambda$  is $G$-stable silting discrete. 
 Then the following assertions hold.
 
 \begin{enumerate}[(1)]
 
\item  $A = \Lambda * G$ is silting discrete.

\item All silting complexes over  $A$ are \textup{(}$\mod G$\textup{)}-stable.

\item The adjoint pair $(\Ind_{\Lambda}^{A}, \Res_{\Lambda}^{A})$ induces a bijection 
\[
\Ind_{\Lambda}^{A}: ( \silt \Lambda)^{G} \longleftrightarrow \silt A: \Res_{\Lambda}^{A}.
\]
\end{enumerate}

\end{theorem}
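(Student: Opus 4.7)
The plan is to mirror the proof of Theorem \ref{202309301357} in the silting setting. Call a basic silting complex $T = \bigoplus_{i=1}^n T_i \in \silt A$ \emph{ind-\textup{(}$\mod G$\textup{)}-stable} if each indecomposable summand $T_i$ is $(\mod G)$-stable in $\Kbproj{A}$ (compare Definition \ref{202403311650}). The starting observation is that by Lemma \ref{202309301224}(3), the basic silting complex $A^{\basic} = \bigoplus_{e \in \sfF} Ae$ is ind-\textup{(}$\mod G$\textup{)}-stable, since every $Ae$ is.

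The main technical step is a silting analog of Lemma \ref{202309301439}: if $T = \bigoplus_{i=1}^n T_i$ is basic and ind-\textup{(}$\mod G$\textup{)}-stable, then every irreducible left mutation $\mu_i^-(T)$ is again ind-\textup{(}$\mod G$\textup{)}-stable. I would transcribe the argument of Lemma \ref{202309301439} into $\Kbproj{A}$: setting $T' := \bigoplus_{j \neq i} T_j$, pick a minimal left $\add T'$-approximation $f : T_i \to L$ in $\Kbproj{A}$, and let $C$ be the cone, so that the mutated indecomposable summand is $C$. The key point is that the adjoint pair $(X \otimes_{\kk}^G -,\, \tuD(X) \otimes_{\kk}^G -)$ of Proposition \ref{2023010031837} extends to $\Kbproj{A}$ because both functors are exact and preserve projectives (Corollary \ref{202310021728}); this adjunction shows that $X \otimes_{\kk}^G f$ is again a left $\add T'$-approximation. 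Combined with right minimality of $f^{\oplus \dim X}$, a dimension-count identical to the one in Lemma \ref{202309301439} forces $X \otimes_{\kk}^G L \cong L^{\oplus \dim X}$, and hence $X \otimes_{\kk}^G C \cong C^{\oplus \dim X}$, as required.

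With this lemma in hand, fix $n \geq 1$ and let $\mu_{\bullet}^{-\NN}(A) \cap [A[n], A]$ denote the set of basic silting complexes reachable from $A^{\basic}$ by iterated irreducible left mutation inside the interval $[A[n], A]$. Iterating the key lemma, every such complex is ind-\textup{(}$\mod G$\textup{)}-stable, in particular \textup{(}$\mod G$\textup{)}-stable. Under the poset isomorphism of Theorem \ref{202403121806}, $\Res_{\Lambda}^A$ sends this set injectively into $(\silt \Lambda)^G \cap [\Lambda[n], \Lambda]$, which is finite by the assumption of $G$-stable silting discreteness. Therefore $\mu_{\bullet}^{-\NN}(A) \cap [A[n], A]$ is finite. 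Invoking the silting analog of \cite[Corollary 2.38]{AIR} (Aihara-type connectedness of the silting mutation quiver on a bounded interval once it is finite), this set coincides with $\silt A \cap [A[n], A]$. Hence $|\silt A \cap [A[n], A]| < \infty$ for every $n$, proving (1). Since every silting complex of $A$ lies in some such interval, (2) follows, and then (3) is immediate from Theorem \ref{202403121806}.

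The main obstacle I anticipate is step two: checking rigorously that the dimension-count proof of Lemma \ref{202309301439} goes through in $\Kbproj{A}$, where the approximation triangle replaces the short exact sequence, and that right minimality of the approximation continues to force $X \otimes_{\kk}^G L \cong L^{\oplus \dim X}$. A secondary point of care is the precise citation for the silting-theoretic Aihara connectedness that makes iterated mutation sweep out the entire interval once it is finite; this is the piece that upgrades finiteness of $\mu_{\bullet}^{-\NN}(A) \cap [A[n], A]$ to finiteness of $\silt A \cap [A[n], A]$.
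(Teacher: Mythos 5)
Your proposal is correct and follows essentially the same route as the paper: the paper likewise introduces ind-\textup{(}$\mod G$\textup{)}-stability for objects of the homotopy category, proves that irreducible silting mutations preserve it by the method of Lemma \ref{202309301439}, starts from the ind-stable $A^{\basic}$, and then uses Theorem \ref{202403121806} together with $G$-stable silting discreteness to rule out unreachable silting complexes (phrased there as a contradiction via an infinite descending chain of left mutations bounded by $A[n]$, which is the same connectedness fact you invoke directly).
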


\begin{proof}
We can introduce the notion of ind-\textup{(}$\mod G$\textup{)}-stability of objects of $\Dbmod{\Lambda}$
as in the same way of Definition \ref{202403311650}. 
Then we can prove by the same method of Lemma \ref{202309301439}
 that 
every irreducible  left or  irreducible right silting   mutation $\mu^{\silt, \pm}_{i}(S)$ of  ind-\textup{(}$\mod G$\textup{)}-stable silting complexes over $A$ 
is again ind-\textup{(}$\mod G$\textup{)}-stable.  
On the other hand, as is mentioned in the proof of Theorem \ref{202309301357}, the $A$-module $A^{\basic}$ is ind-\textup{(}$\mod G$\textup{)}-stable. 
Therefore, if we denote by $\mu^{\silt, \ZZ}_{\bullet}(A)$ the set of all silting complexes over $A$ obtained by silting mutations from $A^{\basic}$, 
then $\mu^{\silt,\ZZ}_{\bullet}(A) \subset (\sttilt A)^{\mod G}$. 
%

Thus we only have to show that every silting complex $T$ over $A$ belongs to $\mu_{\bullet}^{\silt, \ZZ}(A)$. 
Shifting the cohomological grading, we may assume that $T \in [A[n],A]$ for some $n \geq 0$. 
Assume   that $T \notin \mu_{\bullet}^{\silt, \ZZ}(A)$. 
Then 
 there exists an infinite  sequence of irreducible left silting mutations  starting from $A$ that is bounded by $T$ and hence by $A[n]$.
\[
A^{\basic} \xrightarrow{ \ \mu_{i_{1}}^{\silt, -} \  }  T_{1}\xrightarrow{ \ \mu_{i_{2}}^{\silt, -} \  }  T_{2} \xrightarrow{ \ \mu_{i_{3}}^{\silt, -} \  } \cdots  > T > A^{\basic}[n]. 
\]
Thus  the set
$(\silt A)^{\mod G} \cap [A[n], A]$ contains an infinite set $\{A, T_{1}, T_{2}, \dots \}$. 
It follows from Theorem \ref{202403121806} that  $ | (\silt \Lambda)^{G} \cap [\Lambda[n],\Lambda]| =\infty$, a contradiction. 
\end{proof}

\subsection{The case that $\Lambda$ is Frobenius}\label{202404041518}

In this section we assume that $\Lambda$ is (finite dimensional) Frobenius.

\subsubsection{Preliminary}

We always equip a Frobenius algebra $\Lambda$ with a Nakayama automorphism $\nu_{\Lambda}$ 
and a Nakayama form $\agl{-,+}_{\Lambda}: \Lambda \times \Lambda \to \kk$, which now we recall.  
A Nakayama automorphism $\nu_{\Lambda}:\Lambda \to \Lambda$ is an algebra automorphism 
and a Nakayama form $\agl{-,+}_{\Lambda}: \Lambda \times\Lambda \to \kk$ is a non-degenerate bilinear form
 that satisfies the equations 
$\agl{rs, t}_{\Lambda}= \agl{r,st}_{\Lambda} =\agl{s, t\nu_{\Lambda}(r)}$ 
for any $r,s,t\in \Lambda$. 
We note that a Nakayama automorphism is unique up to inner automorphisms. 
We also note that the map $\phi: \Lambda \to \tuD(\Lambda), \ \phi_{r}(s) := \agl{r,s}_{\Lambda}$ 
gives an isomorphism $\phi: \Lambda \to {}_{\nu}\tuD(\Lambda)$ of $\Lambda$-$\Lambda$-bimodules. 

Following \cite[Definition 11]{AJS}, 
we say that a group action $G \curvearrowright \Lambda$ on a Frobenius algebra $\Lambda$ \emph{preserves the Nakayama form} 
if we have $\agl{g(r), g(s)}_{\Lambda} =\agl{r,s}_{\Lambda}$ for all $r,s \in \Lambda$ and $g \in G$.

\begin{proposition}[{cf. \cite[Proposition 4.4, Corollary 4.5]{AJS}}]\label{202404041529}
Let $\Lambda$ be a Frobenius algebra with an action of $G$ that preserves the Nakayama form. 
Then the skew group algebra $A=\Lambda * G$ is a Frobenius algebra 
with a Nakayama automorphism $\nu_{A}$ given by 
\[
\nu_{A}(r* g) := \nu_{\Lambda}(r) *g \ \ ( r\in \Lambda, \ g \in G).
\]
\end{proposition}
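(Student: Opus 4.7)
The plan is to construct a Frobenius linear form $\lambda_A: A \to \kk$ on $A$ by setting $\lambda_A(r*g) := \lambda_\Lambda(r)\,\delta_{g,e_G}$ where $\lambda_\Lambda(r) := \langle 1, r\rangle_\Lambda$, and then show that the associated bilinear pairing $\langle \alpha, \beta\rangle_A := \lambda_A(\alpha\beta)$ is a Nakayama form on $A$ whose Nakayama automorphism is the $\nu_A$ of the statement. A short computation shows that this pairing is given explicitly by
\[
\langle r*g,\, s*h\rangle_A \;=\; \langle r,\, g(s)\rangle_\Lambda\,\delta_{gh,e_G}
\qquad (r,s\in \Lambda,\ g,h\in G).
\]

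The first step is to verify non-degeneracy. Under the decomposition $A = \bigoplus_{g\in G}\Lambda * g$, the form vanishes on pairs $(\Lambda * g, \Lambda * h)$ with $gh \neq e_G$ and restricts to the pairing $(r,s)\mapsto \langle r, g(s)\rangle_\Lambda$ between $\Lambda * g$ and $\Lambda * g^{-1}$; this pairing is non-degenerate because $\langle -, + \rangle_\Lambda$ is and $g$ acts as a linear automorphism of $\Lambda$. The second step is the auxiliary commutation $g\circ \nu_\Lambda = \nu_\Lambda \circ g$ for every $g\in G$: the hypothesis that $G$ preserves the Nakayama form translates into $\lambda_\Lambda\circ g = \lambda_\Lambda$, and combining this with the defining identity $\lambda_\Lambda(ru) = \lambda_\Lambda(u\nu_\Lambda(r))$ together with non-degeneracy of $\lambda_\Lambda$ yields the desired equivariance. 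Granted this commutation, the formula $\nu_A(r*g) := \nu_\Lambda(r) * g$ is immediately seen to be an algebra automorphism of $A$, since $\nu_A((r*g)(s*h)) = \nu_\Lambda(r)\nu_\Lambda(g(s)) * gh = \nu_\Lambda(r)\,g(\nu_\Lambda(s)) * gh = \nu_A(r*g)\,\nu_A(s*h)$.

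The final step is to verify the Nakayama identities $\langle \alpha\beta, \gamma\rangle_A = \langle \alpha, \beta\gamma\rangle_A = \langle \beta, \gamma\nu_A(\alpha)\rangle_A$ for $\alpha = r*g$, $\beta = s*h$, $\gamma = t*k$. The first equality is free from the description via $\lambda_A$. For the second, both sides vanish unless $ghk = e_G$, and the claim reduces to
\[
\langle r\cdot g(s),\, gh(t)\rangle_\Lambda \;=\; \langle s,\, h(t)\cdot hk(\nu_\Lambda(r))\rangle_\Lambda,
\]
which I would prove by first applying the Nakayama property of $\langle-,+\rangle_\Lambda$ on the left to rewrite it as $\langle g(s),\, gh(t)\,\nu_\Lambda(r)\rangle_\Lambda$, then using $G$-invariance with $g^{-1}$ together with the constraint $hk = g^{-1}$ (so that $g^{-1}(\nu_\Lambda(r)) = hk(\nu_\Lambda(r))$). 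The main technical obstacle will be the bookkeeping of $G$-twists in this final identity, in particular coordinating the Nakayama associativity of $\langle -, +\rangle_\Lambda$, the $G$-invariance of the form, and the commutation $\nu_\Lambda\circ g = g\circ \nu_\Lambda$ in the correct order; once they are arranged, the verification collapses to a direct computation.
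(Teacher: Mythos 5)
Your proposal is correct and follows essentially the same route as the paper: you establish the commutation $\nu_\Lambda\circ g=g\circ\nu_\Lambda$ from invariance of the form plus non-degeneracy, and your pairing $\langle r*g,s*h\rangle_A=\langle r,g(s)\rangle_\Lambda\,\delta_{gh,e_G}$ is exactly the Nakayama form the paper writes down. The only difference is presentational — you derive the form from the trace functional $\lambda_A$ and carry out the verification of non-degeneracy and the Nakayama identities that the paper leaves as ``straightforward to check.''
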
 

\begin{proof}
For simplicity we write $\nu =\nu_{\Lambda}, \ \agl{-,+} = \agl{-,+}_{\Lambda}$. 
First we claim that $\nu g(r) = g \nu (r)$ for all $r\in \Lambda$. 
Indeed we can verify, as below, that the equation $\agl{ \nu^{-1}g^{-1}\nu g (r), s} = \agl{r,s}$ holds for any $s\in \Lambda$:
\[
\begin{split}
\agl{\nu^{-1}g^{-1} \nu g(r), s} =\agl{s, g^{-1}\nu g(r)} = \agl{g(s), \nu g(r)} = \agl{g(r), g(s)} = \agl{r,s}
\end{split}
\]

Now it is straightforward to check that the bilinear form 
$\agl{-,+}_{A}: A \times A \to \kk$ 
given by 
\[
\agl{r*g, s*h}_{A} := \delta_{g,h^{-1}}\agl{r,g(s)} \ \ (r,s \in \Lambda,  \ g,h \in G)
\]
is a Nakayama form whose Nakayama automorphism is $\nu_{A}$. 
%
%
\end{proof} 

%
%
%
%

\subsubsection{Twist by Nakayama automorphism of finite order}

Let $\Lambda$ be a Frobenius algebra.  
Now we  assume that $\Lambda$ has a Nakayama automorphism $\nu$ of order $N< \infty$ in the group   $\Aut \Lambda$ of algebra automorphisms of $\Lambda$. 
Let $\agl{\nu}\cong \ZZ/N\ZZ$ be the subgroup of $\Aut \Lambda$ generated by $\nu$. 
We consider the canonical  action $\agl{\nu} \curvearrowright \Lambda$ to be $\nu^{n} \cdot r := \nu^{n}(r)$ for $r\in \Lambda$ and $n \in \ZZ/N\ZZ$.

We denote  the posets of  $\agl{\nu}$-stable objects 
$(\sttilt \Lambda)^{\nu} := (\sttilt \Lambda)^{\agl{\nu}},$ \ $(\silt \Lambda)^{\nu}:= (\silt \Lambda)^{\agl{\nu}}$ etc. 

We point out the  lemma below that follows from \cite[Theorem A.4.]{Aihara}, \cite[Propositions 2.5, 2.6]{Adachi-Kase}. 

\begin{lemma}\label{202405081445}
We have the following equalities  and an isomorphism of the posets
\[
\twotilt \Lambda = (\2silt \Lambda)^{\nu}\cong (\sttilt \Lambda)^{\nu}, \ \tilt \Lambda = (\silt \Lambda)^{\nu}.\]
\end{lemma}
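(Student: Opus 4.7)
The plan is to reduce both equalities to the well-known characterization, due to Aihara \cite[Theorem A.4]{Aihara}, of tilting objects among silting objects over a self-injective algebra: a silting complex $T \in \Kbproj{\Lambda}$ is tilting if and only if its image under the Nakayama functor $\cN = \tuD(\Lambda) \otimes_{\Lambda}^{\LL} -$ satisfies $\add \cN(T) = \add T$.

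First I would translate Nakayama-stability into the $\agl{\nu}$-stability used throughout this paper. The bimodule isomorphism $\Lambda \cong {}_{\nu}\tuD(\Lambda)$ recalled just before Proposition \ref{202404041529} yields $\tuD(\Lambda) \cong {}_{\nu^{-1}}\Lambda$ as $\Lambda$-bimodules, and hence $\cN(P) \cong {}_{\nu^{-1}} P$ for every $P \in \Kbproj{\Lambda}$. By (the derived version of) Lemma \ref{202309151607}(1), this endofunctor permutes the indecomposable projectives via $\Lambda e \mapsto \Lambda \nu(e)$, so $\add \cN(T) = \add T$ is equivalent to $\add {}_{\nu^{-1}} T = \add T$, that is, to $\agl{\nu}$-stability of $T$ (using that $\agl{\nu} = \agl{\nu^{-1}}$).

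With that identification in hand, Aihara's theorem immediately gives $\tilt \Lambda = (\silt \Lambda)^{\nu}$, and restricting to two-term complexes yields $\twotilt \Lambda = (\2silt \Lambda)^{\nu}$; the latter is essentially the content of \cite[Propositions 2.5, 2.6]{Adachi-Kase}. Finally, the standard poset isomorphism $\tuH^{0}: \2silt \Lambda \xrightarrow{\cong} \sttilt \Lambda$ intertwines the $\agl{\nu}$-actions because the twist ${}_{\nu}(-)$ on $\mod \Lambda$ is exact and so commutes with $\tuH^{0}$; it therefore restricts to the desired isomorphism $(\2silt \Lambda)^{\nu} \cong (\sttilt \Lambda)^{\nu}$, exactly as in the proof of Proposition \ref{202403121843}.

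The main technical point will be verifying cleanly that the derived endofunctor ${}_{\nu^{-1}}(-)$ of $\Kbproj{\Lambda}$ coincides with the Nakayama functor up to natural isomorphism; once this identification of the two actions on $\Kbproj{\Lambda}$ is in place, the lemma follows formally from the cited results and from Proposition \ref{202403121843}. I do not foresee any further obstacle.
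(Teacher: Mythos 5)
Your proposal is correct and follows essentially the same route as the paper, which states this lemma without proof as a direct consequence of \cite[Theorem A.4]{Aihara} and \cite[Propositions 2.5, 2.6]{Adachi-Kase}. The details you supply — identifying the Nakayama functor with the twist ${}_{\nu^{-1}}(-)$ via $\Lambda\cong{}_{\nu}\tuD(\Lambda)$, noting $\agl{\nu}=\agl{\nu^{-1}}$, and transporting stability through $\tuH^{0}$ as in Proposition \ref{202403121843} — are exactly the intended justification.
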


The following corollary plays a key role in applications given in the next section.

\begin{corollary}\label{202403311407}
In the above setup, the skew group algebra $A:= \Lambda * \agl{\nu}$ is symmetric 
and we have the following equalities and an isomorphism of posets
\[
\twotilt A=  \2silt A \cong \sttilt A , \  \tilt A=\silt A.
\]
\end{corollary}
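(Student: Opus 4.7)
The plan is to reduce the corollary to the single claim that $A=\Lambda*\agl{\nu}$ is symmetric. Once this is established, Lemma \ref{202405081445} applied to $A$ yields both $\twotilt A=\2silt A$ and $\tilt A=\silt A$, because for a symmetric algebra the Nakayama automorphism may be taken to be the identity so that every silting (resp.\ $2$-term silting) complex is automatically $\nu_A$-stable. The remaining isomorphism $\2silt A\cong\sttilt A$ is the general bijection $\tuH^0$ already invoked in Proposition \ref{202403121843}.

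First I would apply Proposition \ref{202404041529} to obtain that $A$ is Frobenius with Nakayama automorphism $\nu_A(r*g):=\nu(r)*g$. Next I would verify that $\nu_A$ is inner by computing the conjugation by the unit $u:=1*\nu\in A^{\times}$:
\[
u(r*g)u^{-1}=(1*\nu)(r*g)(1*\nu^{-1})=(\nu(r)*\nu g)(1*\nu^{-1})=\nu(r)*\nu g\nu^{-1}=\nu(r)*g,
\]
where the last equality uses that $\agl{\nu}$ is abelian, so $\nu g\nu^{-1}=g$ for every $g\in\agl{\nu}$. Hence $\nu_A$ is inner and so $A$ is symmetric.

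Since $\nu_A$ is inner, it acts trivially on isomorphism classes of objects in $\sfK^{\mrb}(A\proj)$, so $(\silt A)^{\nu_A}=\silt A$ and $(\2silt A)^{\nu_A}=\2silt A$. Combining these identities with Lemma \ref{202405081445} applied to $A$ (whose Nakayama automorphism $\nu_A$ has order dividing $N$) gives the asserted equalities $\twotilt A=\2silt A$ and $\tilt A=\silt A$, and the $\tuH^0$ bijection recalled in Section \ref{202403121920} supplies the isomorphism $\2silt A\cong\sttilt A$.

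The main obstacle is an implicit technical verification at the invocation of Proposition \ref{202404041529}: one needs a Nakayama form on $\Lambda$ that is preserved by the $\agl{\nu}$-action. When $\chara\kk\nmid N$ this is straightforward by averaging, whereas in the modular case one must produce such a form by a more careful direct construction (exploiting that two Nakayama forms differ by a central unit while both share the same Nakayama automorphism $\nu$). Once this point is dispensed with, the remainder of the argument is the one-line inner-automorphism computation above, using nothing beyond the commutativity of $\agl{\nu}$.
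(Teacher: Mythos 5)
Your proof is correct and follows essentially the same route as the paper: invoke Proposition \ref{202404041529} to get the Nakayama automorphism $\nu_{A}(r*g)=\nu(r)*g$, check that $\nu_{A}$ is the inner automorphism given by conjugation with $1_{\Lambda}*\nu$ (the computation you write out is exactly the step the paper dismisses as ``straightforward''), and then apply Lemma \ref{202405081445} to $A$. The only remark worth adding is that the ``main obstacle'' you flag is not actually an obstacle: any Nakayama form whose associated Nakayama automorphism is $\nu$ is automatically preserved by the action of $\agl{\nu}$, since the defining identities give $\agl{r,t}=\agl{t,\nu(r)}=\agl{\nu(r),\nu(t)}$, so neither averaging nor a special modular-case construction is needed.
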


\begin{proof}
By Proposition \ref{202404041529},  
the automorphism $\nu_{A}$ of $A$ given by $\nu_{A}(r*\nu^{m}) := \nu_{\Lambda}(r) * \nu^{m}$ is a Nakayama automorphism of $A$. 
It is straightforward to check that $\nu_{A}$ is the inner automorphism $\ad_{\epsilon}=\epsilon \cdot - \cdot \epsilon^{-1}$ associated to the element $\epsilon := 1_{\Lambda} *\nu$. 
It follows that $A$ is symmetric. 
Applying  Lemma \ref{202405081445} to the algebra $A$, we obtain the desired equalities of posets. 
\end{proof}

%

Applying  Theorem \ref{202309301601}, Theorem \ref{202403121806} and Proposition \ref{202403121843}, 
we obtain the following result. 

\begin{theorem}\label{202405081452}
The adjoint pair $(\Ind_{\Lambda}^{A},\Res^{A}_{\Lambda})$ induces the following isomorphisms of posets 
\[
\begin{split}
&\twotilt \Lambda \cong (\twotilt A)^{\agl{\nu}\mod} \cong (\sttilt A)^{\agl{\nu}\mod}, \ \ \ \tilt \Lambda \cong (\silt A)^{\agl{\nu}\mod}. 
\end{split}
\]
\end{theorem}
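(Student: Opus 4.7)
The plan is to synthesize four previously established results: Lemma \ref{202405081445}, Theorem \ref{202403121806}, Proposition \ref{202403121843}, and Corollary \ref{202403311407}. Together they express $\twotilt \Lambda$ and $\tilt \Lambda$ as certain $\agl{\nu}$-stable subposets on the $\Lambda$-side, transport these across the skew group extension $\Lambda \subset A$ via $(\Ind_{\Lambda}^{A},\Res_{\Lambda}^{A})$, and then identify the targets with the corresponding posets on the $A$-side using that $A$ is symmetric. No new analysis beyond careful bookkeeping is required.

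First I would prove $\twotilt \Lambda \cong (\twotilt A)^{\agl{\nu}\mod}$. By Lemma \ref{202405081445} we have $\twotilt \Lambda=(\2silt \Lambda)^{\nu}$. Applying the middle row of Theorem \ref{202403121806} to the cyclic group $G=\agl{\nu}$ yields a poset isomorphism $(\2silt \Lambda)^{\nu}\cong (\2silt A)^{\agl{\nu}\mod}$, implemented by the induction-restriction adjoint pair. Corollary \ref{202403311407} then gives the equality of posets $\2silt A=\twotilt A$ (since $A$ is symmetric), so the right-hand side is $(\twotilt A)^{\agl{\nu}\mod}$, completing the first isomorphism. For the second isomorphism $(\twotilt A)^{\agl{\nu}\mod}\cong (\sttilt A)^{\agl{\nu}\mod}$, I invoke Proposition \ref{202403121843}(2), which says that $\tuH^{0}$ restricts to a poset isomorphism $(\2silt A)^{\agl{\nu}\mod}\xrightarrow{\sim}(\sttilt A)^{\agl{\nu}\mod}$; combined with the identification $\twotilt A=\2silt A$ this gives exactly the assertion.

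The tilting statement $\tilt \Lambda\cong (\silt A)^{\agl{\nu}\mod}$ follows from the same template with the bottom row of Theorem \ref{202403121806} replacing the middle row. Namely, Lemma \ref{202405081445} gives $\tilt \Lambda=(\silt \Lambda)^{\nu}$, and Theorem \ref{202403121806} transports this to $(\silt A)^{\agl{\nu}\mod}$; should one prefer the formulation $(\tilt A)^{\agl{\nu}\mod}$, Corollary \ref{202403311407} further identifies the two. No step presents any genuine obstacle, since every ingredient is already available; the only point worth emphasizing is that all the intermediate poset isomorphisms are implemented by the same functorial data $(\Ind_{\Lambda}^{A},\Res_{\Lambda}^{A})$, post-composed with $\tuH^{0}$ in the $2$-term case, which is precisely what the theorem asserts.
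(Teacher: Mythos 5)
Your proposal is correct and follows essentially the same route as the paper, which simply cites Theorem \ref{202309301601}, Theorem \ref{202403121806} and Proposition \ref{202403121843} (with Lemma \ref{202405081445} and Corollary \ref{202403311407} implicitly supplying the identifications $\twotilt \Lambda=(\2silt \Lambda)^{\nu}$, $\tilt \Lambda=(\silt \Lambda)^{\nu}$ and $\twotilt A=\2silt A$, $\tilt A=\silt A$ on the two sides). Your write-up merely makes this bookkeeping explicit; the only cosmetic difference is that you pass through $(\2silt A)^{\agl{\nu}\mod}$ and $\tuH^{0}$ where one could equally apply Theorem \ref{202309301601} directly to get $(\sttilt \Lambda)^{\nu}\cong(\sttilt A)^{\agl{\nu}\mod}$.
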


Finally, 
applying Theorem \ref{202309301357} and Theorem \ref{202403311841}, we obtain the following result. 

\begin{theorem}\label{202405081500}
Assume that the action $\agl{ \nu} \curvearrowright \Lambda$ preserves idempotent elements and has local stabilizers and that 
$\Lambda$ is tilting discrete. 
Then the  adjoint pair $(\Ind_{\Lambda}^{A},\Res^{A}_{\Lambda})$ induces the following isomorphisms of posets 
\[
\begin{split}
&\twotilt \Lambda \cong \twotilt A \cong \sttilt A, \ \ \ \tilt \Lambda \cong \silt A. 
\end{split}
\]
\end{theorem}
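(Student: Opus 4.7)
The plan is to obtain the statement by assembling Corollary~\ref{202403311407}, Lemma~\ref{202405081445}, and the finiteness theorems (Theorems~\ref{202309301357} and~\ref{202403311841}), which together with the hypotheses will force every support $\tau$-tilting module and every silting complex over $A$ to come from a $\agl{\nu}$-stable one over $\Lambda$.

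First I would dispose of the right-hand equalities $\twotilt A \cong \sttilt A$ and $\silt A = \tilt A$: since the hypothesis that the Nakayama automorphism $\nu$ has finite order and the fact that $A = \Lambda * \agl{\nu}$ is symmetric are precisely the content of Corollary~\ref{202403311407}, these are already available. It then remains to produce the two poset isomorphisms $\twotilt \Lambda \cong \sttilt A$ and $\tilt \Lambda \cong \silt A$.

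Next I would verify that the action $\agl{\nu} \curvearrowright \Lambda$ satisfies the two finiteness/discreteness hypotheses needed to invoke Theorems~\ref{202309301357} and~\ref{202403311841} with $G = \agl{\nu}$. By Lemma~\ref{202405081445} we have the identifications $(\sttilt \Lambda)^{\nu} \cong \twotilt \Lambda$ and $(\silt \Lambda)^{\nu} = \tilt \Lambda$, so the required conditions translate into statements about $\twotilt \Lambda$ and $\tilt \Lambda$ alone. Tilting discreteness of $\Lambda$ directly gives $|\tilt \Lambda \cap [\Lambda[n],\Lambda]| < \infty$ for every $n \geq 0$, which is precisely $\agl{\nu}$-stable silting discreteness in the sense of Definition~\ref{202403311837}. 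Moreover, since every 2-term tilting complex lies in $[\Lambda[1],\Lambda]$, we get $|\twotilt \Lambda| \leq |\tilt \Lambda \cap [\Lambda[1],\Lambda]| < \infty$, whence $|(\sttilt \Lambda)^{\nu}| < \infty$. Combined with the hypothesis that the action preserves idempotents and has local stabilizers, the premises of Theorems~\ref{202309301357} and~\ref{202403311841} are all met.

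Finally I would apply Theorem~\ref{202309301357} (resp. Theorem~\ref{202403311841}) to obtain poset isomorphisms $\Ind_{\Lambda}^{A}: (\sttilt \Lambda)^{\nu} \xrightarrow{\sim} \sttilt A$ and $\Ind_{\Lambda}^{A}: (\silt \Lambda)^{\nu} \xrightarrow{\sim} \silt A$, and then chain them with the isomorphisms $\twotilt \Lambda \cong (\sttilt \Lambda)^{\nu}$ and $\tilt \Lambda = (\silt \Lambda)^{\nu}$ of Lemma~\ref{202405081445} to conclude $\twotilt \Lambda \cong \sttilt A$ and $\tilt \Lambda \cong \silt A$, which together with the first step finishes the proof. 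There is no real obstacle here: the work is entirely in lining up the hypotheses, the only minor point requiring care being the translation of tilting discreteness of $\Lambda$ into the $\agl{\nu}$-stable silting discreteness and $\agl{\nu}$-stable $\tau$-tilting finiteness demanded by Theorems~\ref{202403311841} and~\ref{202309301357}, which is handled cleanly by Lemma~\ref{202405081445}.
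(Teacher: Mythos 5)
Your proposal is correct and follows exactly the route the paper intends: the paper gives no written proof beyond ``applying Theorem \ref{202309301357} and Theorem \ref{202403311841}, we obtain the following result,'' and your argument supplies precisely the missing bookkeeping — translating tilting discreteness of $\Lambda$ into $\agl{\nu}$-stable silting discreteness and finiteness of $(\sttilt\Lambda)^{\nu}$ via Lemma \ref{202405081445}, then chaining with Corollary \ref{202403311407}. The one step worth making explicit, which you handle correctly, is that $\twotilt\Lambda \subseteq \tilt\Lambda \cap [\Lambda[1],\Lambda]$, so tilting discreteness already yields the finiteness hypothesis of Theorem \ref{202309301357}.
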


\section{Application to preprojective algebras and folded mesh algebras}\label{202404102228}

In this section, applying results obtained in the previous sections, 
we study silting theory and $\tau$-tilting theory of preprojective algebras and folded mesh algebras. 

\subsection{The preprojective algebra $\Pi(\LL_{n})$ of type $\LL_{n}$}

In this section, 
we  determine  the posets $\sttilt \Pi(\LL_{n}), \ \silt \Pi(\LL_{n})$  in terms of the Weyl group and the braid group. 

\subsubsection{}

Recall that the generalized Dynkin diagrams introduced by Happel-Preiser-Ringel \cite{HPR} 
consist of the usual Dynkin diagrams $\AA_{n}$, $\BB_{n}$, $\CC_{n}$, $\DD_{n}$, $\EE_{6}$, $\EE_{7}$, $\EE_{8}$, $\FF_{4}$, $\GG_{2}$ 
and  one additional case $\LL_{n}$ defined as 
\[\LL_{n} \ \ 
\begin{xymatrix}{
1 \ar@{-}[r] & 2 \ar@{-}[r] 
 & 3 \ar@{-}[r] & \cdots  \ar@{-}[r] 
  & n-1  \ar@{-}[r] 
  & n   \ar@{-}@(rd,ru).
 }\end{xymatrix} 
\]
We recall from \cite[Section 7]{ES}, the definition of the preprojective algebra $\Pi(\LL_{n})$ of type $\LL_{n}$.
The algebra $\Pi(\LL_{n})$ is given by the following quiver 
\[
\begin{xymatrix}{
1 \ar@<2pt>[r]^{\beta_{1}} & 2 \ar@<2pt>[r]^{\beta_{2}} \ar@<1pt>[l]^{\beta_{1}^{*}} 
 & 3 \ar@<2pt>[r]^{\beta_{3}} \ar@<1pt>[l]^{\beta_{2}^{*}} & \cdots  \ar@<2pt>[r]^{\beta_{n-2}} \ar@<1pt>[l]^{\beta_{3}^{*}} 
  & n-1  \ar@<2pt>[r]^{\beta_{n-1}} \ar@<1pt>[l]^{\beta_{n -2}^{*}} 
  & n  \ar@<1pt>[l]^{\beta_{n-1}^{*}}  \ar@(rd,ru)_{\gamma}
 }\end{xymatrix} 
\]
with the relation 
$\gamma^{2} + \sum_{i=1}^{n} (\beta_{i} \beta^{*}_{i} - \beta^{*}_{i-1}\beta_{i -1})$,  
where we set $\beta_{0}=\beta_{0}^*=\beta_{n}=\beta_{n}^*:= 0$. 
Representation theory of $\Pi(\LL_{n})$ and its deformations have been studied by several researchers (see e.g., \cite{BES,HZ}). 
The generalized Dynkin diagrams and their preprojective algebras also appear in the study of the quantum groups and tensor categories related to conformal field theory (see e.g., \cite{EK, KO, MOV}). We note that in this area the diagram $\LL_{n}$ is usually named as $T_{n}$.

The main theorem of this section tells that 
the posets $\sttilt \Pi(\LL_{n}), \ \silt \Pi(\LL_{n})$ are isomorphic to 
the Weyl group $W$ and the braid group $B$ of type $\BB_{n}$ and $\CC_{n}$ (equipped with the right weak order), which now we recall.

Recall that the underlying valued graphs $|\Delta_{\BB_{n}}|, |\Delta_{\CC_{n}}|$ of the Dynkin diagrams $ \Delta_{\BB_{n}}, \Delta_{\CC_{n}}$ of types 
$\BB_{n}, \CC_{n}$ coincide to each other. 
It follows that  the Weyl groups and the braid groups of types $\BB_{n}$ and $\CC_{n}$ coincide to each other. 
For simplicity we set   $|\Delta|:= |\Delta_{\BB_{n}}| = |\Delta_{\CC_{n}}|$.  
$$|\Delta|: 
\xymatrix{
1 \ar@{-}[r]^{4} & 2 \ar@{-}[r]  & \cdots &\ar@{-}[r] \cdots & n -1\ar@{-}[r]  & n. 
}$$

Recall that the \emph{Weyl group} $W:=W_{|\Delta|}$ is defined by the generators 
$s_i$ and relations 
$(s_is_j)^{m(i,j)}=1$, 
where  
\[
m(i,j):=\left\{\begin{array}{ll}
1\ \ \ \ \ &\mbox{if $i=j$,}\\
2\ \ \ \ \ &\mbox{if no edge between $i$ and $j$ in $\Delta$,}\\
3\ \ \ \ \ &\mbox{if there is an edge $i\stackrel{ }{\mbox{---}}j$ in $\Delta$,}\\
4\ \ \ \ &\mbox{if  there is an edge $i\stackrel{4}{\mbox{---}}j$ in $\Delta$.}\\
\end{array}\right.\]
(We refer to \cite{BB,H,KT} for the background of Weyl groups and braid groups.)
We regard $W$ as a poset defined by the right weak order. 

The braid group $B=B_{|\Delta|}$ is defined by generators $a_i$  and relations 
$(a_ia_j)^{m(i,j)}=1$ for $i\neq j$  (i.e. the difference with $W_{\Delta}$ is that we do not require the relations $a_i^2=1$ for any $i$). 
We regard $B$ as a poset defined by right-divisibility order.

Now we can state the main result of this section which gives a classification of (2-term) tilting complexes for preprojective algebra of type $\LL_n$.

\begin{theorem}\label{typeL}
Let $\Pi(\LL_{n})$ be the preprojective algebra of type $\LL_n$. 
\begin{itemize}
\item[(1)] We have the following  anti-isomorphism of posets
$$W\to   \sttilt \Pi(\LL_n).$$
\item[(2)] We have the following  anti-isomorphism of posets
$$B\to  \silt \Pi(\LL_n).$$
\end{itemize}
\end{theorem}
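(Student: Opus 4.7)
The plan is to realize $\Pi(\LL_n)$ as a skew group algebra of the preprojective algebra $\Pi(\AA_{2n-1})$ under a $\ZZ/2\ZZ$-action coming from the unique non-trivial diagram automorphism of $\AA_{2n-1}$, and then to transport the known classifications for $\Pi(\AA_{2n-1})$ across the adjoint pair $(\Ind,\Res)$ using the machinery of Section~\ref{202404102225}, in particular Theorem~\ref{202405081452}.

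In more detail, set $\Lambda:=\Pi(\AA_{2n-1})$. The diagram flip of the linear $\AA_{2n-1}$ quiver extends to an algebra automorphism $\nu$ of $\Lambda$ which is (up to inner automorphisms) a Nakayama automorphism of order two, and the induced $\agl{\nu}$-action preserves the vertex idempotents. The first step is to identify $A:=\Lambda*\agl{\nu}$, up to Morita equivalence, with $\Pi(\LL_n)$: the $n-1$ free $\agl{\nu}$-orbits on vertices yield vertices $1,\ldots,n-1$ with the expected $\beta_i,\beta_i^{*}$ arrows, while the unique fixed vertex produces vertex $n$ carrying the loop $\gamma$, and folding the preprojective mesh relation through the generator of $\agl{\nu}$ gives exactly the relation $\gamma^{2}+\sum(\beta_i\beta_i^{*}-\beta_{i-1}^{*}\beta_{i-1})$. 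With this identification in hand, Corollary~\ref{202403311407} tells us that $\Pi(\LL_n)$ is symmetric, so $\twotilt\Pi(\LL_n)=\sttilt\Pi(\LL_n)$ and $\tilt\Pi(\LL_n)=\silt\Pi(\LL_n)$, and it suffices to classify two-term tilting and tilting complexes over $A$.

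Applying Theorem~\ref{202405081452} yields poset anti-isomorphisms $\twotilt\Lambda\cong(\sttilt A)^{\agl{\nu}\mod}$ and $\tilt\Lambda\cong(\silt A)^{\agl{\nu}\mod}$, and one further shows, using Theorem~\ref{202405081500} in characteristic two (so that the local-stabilizer hypothesis at the unique non-trivially stabilized vertex is satisfied) and a direct inspection of the $(\mod\agl{\nu})$-action in other characteristics, that every support $\tau$-tilting (resp.\ silting) $A$-module is automatically $(\mod\agl{\nu})$-stable. This reduces the theorem to the corresponding classification over $\Lambda=\Pi(\AA_{2n-1})$: by Mizuno's theorem there is a canonical anti-isomorphism $W_{\AA_{2n-1}}\to\sttilt\Lambda$, $w\mapsto\Lambda/I_w$, and an analogous anti-isomorphism $B_{\AA_{2n-1}}\to\silt\Lambda$ for silting complexes. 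Combining with the identity $\twotilt\Lambda=(\sttilt\Lambda)^{\nu}$ from Lemma~\ref{202405081445} and the classical identification of the fixed subgroup of $S_{2n}=W_{\AA_{2n-1}}$ under the diagram involution with the hyperoctahedral group $W_{\BB_n}=W_{\CC_n}$ (and similarly $B_{\AA_{2n-1}}^{\nu}=B_{\BB_n}=B_{\CC_n}$) then produces the desired anti-isomorphisms.

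The main obstacle is verifying the $\nu$-equivariance of Mizuno's anti-isomorphism and its braid-group analogue, namely showing $\nu(I_w)=I_{\nu(w)}$ where $\nu$ acts on $W_{\AA_{2n-1}}$ by conjugation with the Coxeter-diagram involution. This should follow essentially tautologically from the construction of $I_w$ as a product $I_{i_1}\cdots I_{i_\ell}$ of the simple two-sided ideals and the fact that $\nu$ permutes these simple ideals exactly as it permutes the simple reflections; however, the precise bookkeeping, combined with the Morita-equivalence identification in Step~1 carried out uniformly across characteristics, is where the careful verification is required.
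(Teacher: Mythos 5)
Your overall strategy (fold a type $\AA$ preprojective algebra by its Nakayama involution, apply Theorem~\ref{202405081500} and Corollary~\ref{202403311407}, then import Mizuno's classification and identify the $\nu$-fixed subgroup of the symmetric group with the hyperoctahedral group) is exactly the paper's, but your first and load-bearing step is wrong: $\Pi(\LL_n)$ is \emph{not} Morita equivalent to $\Pi(\AA_{2n-1})*\agl{\nu}$; the correct statement (the paper's Lemma~\ref{202405221519}) is that $\Pi(\LL_n)$ is Morita equivalent to $\Pi(\AA_{2n})*\agl{\nu}$, where the diagram involution acts \emph{freely} on the $2n$ vertices. The loop $\gamma$ at vertex $n$ of $\LL_n$ does not come from a fixed vertex: it is $\gamma=\alpha_n(e_{n+1}*\nu)$, built from the arrow $\alpha_n$ joining the two swapped central vertices $n$ and $n+1$ of $\AA_{2n}$, and the mesh relation then gives $\gamma^2=\alpha_n\alpha_n^{*}=\beta^{*}_{n-1}\beta_{n-1}$ as required. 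By contrast, at a $\nu$-fixed vertex $e$ of $\AA_{2n-1}$ the local algebra $e(\Lambda*\agl{\nu})e$ contains $\kk\stab(e)\cong\kk[\ZZ/2\ZZ]$, and the only new loop is (a shift of) the group element $e*\nu$, which satisfies $s^2=-2s$, i.e.\ $s^2=0$ in characteristic $2$ and splits $e$ into two idempotents otherwise. So folding $\AA_{\mathrm{odd}}$ at a fixed vertex produces the generalized preprojective algebra of type $\CC$ (GLS in characteristic $2$, Erdmann--Skowro\'nski's version otherwise) --- this is precisely the content of the paper's Lemma~\ref{202405221753} and the surrounding discussion --- and never an algebra with $\gamma^2=\beta^{*}_{n-1}\beta_{n-1}\neq 0$. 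Your proposed ``direct inspection in other characteristics'' cannot rescue this: the identification fails in every characteristic.

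Once the source algebra is corrected to $\Pi(\AA_{2n})$, the rest of your argument simplifies and matches the paper: the action preserves the vertex idempotents and has \emph{trivial} stabilizers, so the local-stabilizer hypothesis of Theorem~\ref{202405081500} holds with no characteristic assumption and no case split, giving $\twotilt\Pi(\AA_{2n})\cong\sttilt\Pi(\LL_n)$ and $\tilt\Pi(\AA_{2n})\cong\silt\Pi(\LL_n)$. Your final worry about the $\nu$-equivariance of $w\mapsto I_w$ is legitimate but is not re-proved in the paper either: Theorem~\ref{tau-weyl} cites the already-packaged results of Aihara--Mizuno and Mizuno's $\nu$-stable paper, which directly give the anti-isomorphisms from $W_{\BB_n}=W_{\CC_n}$ and $B_{\BB_n}=B_{\CC_n}$ onto $\twotilt\Pi(\AA_{2n})$ and $\tilt\Pi(\AA_{2n})$.
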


\subsubsection{Proof of  Theorem \ref{typeL}}

Recall that the preprojective algebra $\Pi(\AA_{n})$ of type $\AA_{n}$ is given by 
the following quiver 
\[
\begin{xymatrix}{
1 \ar@<2pt>[r]^{\alpha_{1}} & 2 \ar@<2pt>[r]^{\alpha_{2}} \ar@<1pt>[l]^{\alpha_{1}^{*}} 
 & 3 \ar@<2pt>[r]^{\alpha_{3}} \ar@<1pt>[l]^{\alpha_{2}^{*}} & \cdots  \ar@<2pt>[r]^{\alpha_{n-2}} \ar@<1pt>[l]^{\alpha_{3}^{*}} 
  & n-1  \ar@<2pt>[r]^{\alpha_{n-1}} \ar@<1pt>[l]^{\alpha_{n -2}^{*}} 
  & n  \ar@<1pt>[l]^{\alpha_{n-1}^{*}} 
 }\end{xymatrix} 
\]
with the mesh relation $\sum_{i=1}^{n} (\alpha_{i} \alpha^{*}_{i} - \alpha^{*}_{i-1}\alpha_{i-1})$, 
where we set $\alpha_{0}=\alpha_{0}^*=\alpha_{n}=\alpha_{n}^*:= 0$. 
By \cite[Definition 4.6, Theorem 4.8]{BBK}, the algebra automorphism  $\nu$ of $\Pi(\AA_{n})$  given below is a Nakayama automorphism of $\Pi(\AA_{n})$.  
\[
\nu: e_{i} \mapsto e_{n-i+1}, \  \alpha_{i} \mapsto \alpha^{*}_{n-i}, \  \alpha_{i}^{*} \mapsto \alpha_{n-i}.  
\] 
Observe  that  the map $\nu$ has order $2$. Therefore we can take the Nakayama twisted algebra $\Pi(\AA_{n}) * \agl{\nu}$.

To prove Theorem \ref{typeL},  we need the following observation, which seems to be  well-known for experts and proved by a straightforward computation.   
But for the convenience of the readers, we give a sketch of the proof.

\begin{lemma}[{cf. \cite[Section 7]{ES}}]\label{202405221519}
The following assertions hold.
\begin{enumerate}[(1)]
\item 
The skew group algebra $\Pi(\AA_{2n}) *\agl{\nu}$ is Morita equivalent to $\Pi(\LL_{n})$. 

\item 
The algebra $\Pi(\LL_{n})$ is symmetric. 
\end{enumerate}
\end{lemma}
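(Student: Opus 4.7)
The plan is to prove (1) by a direct computation of the basic algebra of $A := \Pi(\AA_{2n}) \ast \langle \nu \rangle$, and then to deduce (2) from (1) via Corollary \ref{202403311407}.

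First I would observe that the $\nu$-action on the vertex set $\{1, \dots, 2n\}$ of $\AA_{2n}$ is the involution $i \leftrightarrow 2n+1-i$; since $2n$ is even this action is free with exactly $n$ orbits, so every stabilizer is trivial and each group algebra $\kk\,\mathrm{stab}(e_i) = \kk$ is local. Applying Lemma \ref{202309301224}(1)(2), the idempotents $e_1, \dots, e_n$ (viewed inside $A$) form a complete set of pairwise non-equivalent primitive idempotents of $A$, so the basic algebra of $A$ is $eAe$ with $e := \sum_{i=1}^n e_i$.

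Next I would compute $e_i A e_j$ for $1 \leq i, j \leq n$ using the skew-group multiplication rule, obtaining
\[
e_i A e_j \;=\; e_i \Lambda e_j \ast 1 \;\oplus\; e_i \Lambda e_{2n+1-j} \ast \nu,
\]
where $\Lambda = \Pi(\AA_{2n})$. I would then identify generators of $eAe$ with the generators of $\Pi(\LL_n)$: for $1 \leq i < n$ the arrows $\alpha_i, \alpha_i^*$ of $\AA_{2n}$ contribute the arrows $\beta_i, \beta_i^*$; and the middle arrow $\alpha_n : n \to n+1$, which is $\nu$-identified with $\alpha_n^* : n+1 \to n$, descends to a loop at vertex $n$ of the basic algebra, to be identified with $\gamma$. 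It remains to check that the mesh relation of $\Pi(\AA_{2n})$ descends under this identification to the defining relation $\gamma^2 + \sum_{i=1}^n(\beta_i \beta_i^* - \beta_{i-1}^* \beta_{i-1})$ of $\Pi(\LL_n)$; this is a direct signed calculation.

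For (2), I would invoke Corollary \ref{202403311407}, which will give that $A$ is symmetric once I verify that $\nu$ preserves a Nakayama form on $\Pi(\AA_{2n})$. Using the standard description of that form via projection onto the top degree of the path-length grading of $\Pi(\AA_{2n})$, and the fact that $\nu$ is a quiver automorphism preserving this grading, $\nu$-invariance of the form is immediate. Since being symmetric is Morita invariant, (1) then yields that $\Pi(\LL_n)$ is symmetric. The main obstacle will be the explicit bookkeeping in the third paragraph: extracting the loop $\gamma$ from the $\nu$-twisted summand of $e_n A e_n$ and matching all signs in the mesh relation correctly.
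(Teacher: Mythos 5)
Your proposal follows the paper's proof essentially verbatim: both pass to the basic algebra $\epsilon A \epsilon$ with $\epsilon = \sum_{i=1}^{n} e_i$ via Lemma \ref{202309301224}, identify the generators $\beta_i, \beta_i^*$ with $\alpha_i, \alpha_i^*$ and extract the loop $\gamma$ from the $\nu$-twisted summand of $e_n A e_n$, check that the mesh relation descends, and deduce (2) from (1) via Corollary \ref{202403311407}. The only step you leave implicit is that producing a surjection $\Pi(\LL_n) \to \epsilon A \epsilon$ is not yet an isomorphism; the paper closes this by comparing dimensions, which your computation of the spaces $e_i A e_j$ would supply.
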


\begin{proof}
(1) 
For simplicity we set $\Lambda := \Pi(\AA_{2n}), G := \agl{\nu}, A := \Lambda *G$.
The subset $\{ e_{1}, \dots, e_{n} \}$ of vertexes is a complete set of the representative of the quotient set $\{ e_{1}, \dots, e_{2n}\}/\agl{\nu}$. 
It follows from Lemma \ref{202309301224}  that if we set $\epsilon := \sum_{i=1}^{n}e_{i}$ then the algebra $A':= \epsilon A \epsilon$ is a basic algebra 
which is Morita equivalent to $A$. Thus we only have to prove  $A' \cong \Pi(\LL_{n})$. 

For $i=1,2,\dots, n-1$ we set $\beta_{i} := \alpha_{i}, \ \beta_{i}^{*} := \alpha^{*}_{i}$ 
and $\gamma := \alpha_{n} (e_{n+1}* \nu)$ where we regard these as elements of $A'$. 
It follows from the mesh relation of $\Lambda =\Pi(\AA_{2n})$ that for $i=1,2, \dots, n-1$ we have 
$\beta_{i}\beta_{i}^{*} -\beta^{*}_{i -1} \beta_{i-1} = 0$.
It also follows from the mesh relation of $\Lambda =\Pi(\AA_{2n})$ that 
 $\gamma^{2} = \alpha_{n}\alpha_{n}^{*} = \alpha^{*}_{n-1} \alpha_{n-1} = \beta^{*}_{n-1} \beta_{n -1}$.  
 Thus we have $\gamma^{2} -\beta^{*}_{n-1} \beta_{n-1}=0$. 

Now we obtain a canonical algebra homomorphism $f: \Pi(\LL_{n}) \to A'$.
It is straightforward to check that elements $\gamma, \beta_{i}, \beta^{*}_{i} \ ( i=1,2, \dots, n-1)$ are generators of the algebra $A'$ 
and hence that the map $f$ is surjective.  
Finally we can  compute the dimensions of both algebras and check that they are of the same dimension. 
Thus we conclude that the map $f$ is an isomorphism.

(2) follows from (1) and Corollary \ref{202403311407}.
\end{proof}

We  recall the following result. 

\begin{theorem}\label{tau-weyl}
\begin{itemize}
\item[(1)]We have the following  anti-isomorphism of posets
$$W \to \twotilt\Pi(\AA_{2n}).$$
\item[(2)]We have the following  anti-isomorphism of posets
$$B \to  \tilt\Pi(\AA_{2n}).$$
\end{itemize}
\end{theorem}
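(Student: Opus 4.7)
The plan is to derive both (1) and (2) from Mizuno's theorem on support $\tau$-tilting modules over Dynkin preprojective algebras, combined with a folding argument relating type $\AA_{2n}$ to type $\BB_{n}=\CC_{n}$.

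First, I would invoke Mizuno's classical result: the map $w\mapsto I_{w}:=I_{i_{1}}I_{i_{2}}\cdots I_{i_{k}}$ (for any reduced expression $w=s_{i_{1}}\cdots s_{i_{k}}$ and $I_{i}:=\Pi(\AA_{2n})(1-e_{i})\Pi(\AA_{2n})$) defines an anti-isomorphism of posets $W(\AA_{2n})\to \sttilt\Pi(\AA_{2n})$. For (2), the analogous classical result (due to Aihara and Kimura--Mizuno for preprojective algebras of Dynkin type) gives an anti-isomorphism of posets $B(\AA_{2n})\to \silt\Pi(\AA_{2n})$, where $B(\AA_{2n})$ acts freely and transitively on $\silt\Pi(\AA_{2n})$ via iterated silting mutations.

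Next, I would exploit the diagram involution. The Nakayama automorphism $\nu$ of $\Pi(\AA_{2n})$ recalled in the text is induced from the quiver involution $\sigma\colon i\mapsto 2n+1-i$; it acts on $\sttilt\Pi(\AA_{2n})$ and $\silt\Pi(\AA_{2n})$ via the twist $M\mapsto{}_{\nu}M$, while $\sigma$ permutes the Coxeter generators of $W(\AA_{2n})$ (respectively the Artin generators of $B(\AA_{2n})$) by $s_{i}\leftrightarrow s_{2n+1-i}$ (respectively $a_{i}\leftrightarrow a_{2n+1-i}$). Both of the bijections above are $\sigma$-/$\nu$-equivariant, since the intertwining ${}_{\nu}I_{i}\cong I_{\sigma(i)}$ is immediate from the definition of $I_{i}$, and functoriality of the construction of $I_{w}$ propagates it to arbitrary $w$; the same compatibility for the braid-group action on silting complexes follows from equivariance of the irreducible silting mutations at the $\sigma$-paired indecomposable summands of $\Pi(\AA_{2n})$.

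Then I would take fixed points on both sides. By Lemma \ref{202405081445}, $(\sttilt\Pi(\AA_{2n}))^{\nu}=\twotilt\Pi(\AA_{2n})$ and $(\silt\Pi(\AA_{2n}))^{\nu}=\tilt\Pi(\AA_{2n})$. On the Coxeter side, standard folding of Coxeter systems identifies $W(\AA_{2n})^{\sigma}$ with $W(\BB_{n})=W$: concretely, $W(\AA_{2n})^{\sigma}$ is the centralizer in $S_{2n+1}$ of the involution $(1,2n+1)(2,2n)\cdots(n,n+2)$, namely the hyperoctahedral group. The analogous fold at the level of Artin--Tits groups gives $B(\AA_{2n})^{\sigma}\cong B(\BB_{n})=B$. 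Combining these identifications with the $\sigma$-/$\nu$-equivariant bijections yields the desired anti-isomorphisms $W\to\twotilt\Pi(\AA_{2n})$ and $B\to\tilt\Pi(\AA_{2n})$. The main technical obstacle is the careful bookkeeping needed to show that the inherited poset structure on $W(\AA_{2n})^{\sigma}$ (respectively $B(\AA_{2n})^{\sigma}$) agrees with the intrinsic right weak order on $W(\BB_{n})$ (respectively divisibility order on $B(\BB_{n})$); this is a classical feature of Dynkin folding, but its compatibility with the anti-isomorphism requires verification at the level of the covering relations given by single silting mutation.
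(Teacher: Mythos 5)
The paper's own ``proof'' is a bare citation: part (1) is the main theorem of Mizuno's $\nu$-stable paper \cite{Mizuno: nu-stable} (via Lemma \ref{202405081445}, $\twotilt\Pi(\AA_{2n})\cong(\sttilt\Pi(\AA_{2n}))^{\nu}$, and the $\nu$-stable support $\tau$-tilting modules are classified there by the folded Weyl group), and part (2) is precisely the main theorem of Aihara--Mizuno \cite{Aihara-Mizuno}, which classifies tilting complexes over $\Pi(\Delta)$ by the braid group of the \emph{folded} diagram. Your reconstruction of part (1) is faithful to how \cite{Mizuno: nu-stable} actually argues: $\nu$-equivariance of $w\mapsto I_{w}$, passage to fixed points, and the identification $W(\AA_{2n})^{\sigma}\cong W(\BB_{n})$ compatibly with the weak order. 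That part is fine (modulo the bookkeeping you yourself flag).

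Part (2), however, has a genuine gap. You take as input an anti-isomorphism of posets $B(\AA_{2n})\to\silt\Pi(\AA_{2n})$, attributed to ``Aihara and Kimura--Mizuno'' as a classical result, and then fold. No such classification of the \emph{full silting} poset of $\Pi(\AA_{2n})$ by the unfolded braid group is available in the literature the paper relies on, and it is strictly stronger than what \cite{Aihara-Mizuno} proves. The point is structural: since $\Pi(\AA_{2n})$ is self-injective but not symmetric, a tilting mutation can only be performed at a $\nu$-stable summand, i.e.\ at a $\agl{\nu}$-orbit of vertices, and the braid relations Aihara--Mizuno verify (using the two-sided ideals $I_{i}$) are those of the folded diagram $\BB_{n}$ acting on $\tilt\Pi(\AA_{2n})$ directly --- they never classify $\silt\Pi(\AA_{2n})$, and whether silting mutation at individual vertices satisfies the type-$\AA_{2n}$ braid relations (or even whether $\Pi(\AA_{2n})$ is silting discrete) is not established there. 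A secondary issue, even granting your input: the identification $B(\AA_{2n})^{\sigma}\cong B(\BB_{n})$ of the fixed subgroup of an Artin group under a diagram automorphism is a nontrivial theorem (Crisp), not routine folding, and you would additionally need that every $\nu$-stable silting complex comes from a $\sigma$-fixed braid element. The repair is simply to quote \cite{Aihara-Mizuno} for the statement $B\to\tilt\Pi(\AA_{2n})$ as the paper does, rather than routing through the unproven unfolded silting classification.
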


\begin{proof}
They follows from  \cite{Aihara-Mizuno,Mizuno: nu-stable}.
\end{proof}

Now we see that our main theorem is a simple consequence of a combination of  the above preparations and our general result developed in the previous sections.

\begin{proof}[Proof of Theorem \ref{typeL}]
By Lemma \ref{202405221519}, $\Pi(\AA_{2n})*\langle \nu\rangle$ is Morita equivalent to $\Pi(\LL_n)$. 
Observe  that  the action $\agl{\nu} \curvearrowright\Pi(\AA_{2n})$ preserves canonical idempotent elements and trivial stabilizers. 
Thus, by applying Theorem \ref{202405081500}, we have a poset isomorphism 
$$\twotilt\Pi(\AA_{2n}) \cong \twotilt \Pi(\LL_n) \cong \sttilt \Pi(\LL_{n}) \ \textnormal{and}\ \tilt \Pi(\AA_{2n}) \cong \silt \Pi(\LL_{n}). $$
Then the conclusion follows from Theorem \ref{tau-weyl}.
\end{proof}

\subsection{The generalized preprojective algebra $\Pi(\CC_{n})$ of type $\CC_{n}$ in the sense of \cite{GLS} ($\chara \kk =2$ case).}

In the previous section, we deal with the Nakayama twisted algebra $\Pi(\AA_{2n})*\agl{\nu}$. 
A natural question  is that how about $\Pi(\AA_{2n+1}) * \agl{\nu}$. 
The point here is that the stabilizer group $\stab(n)$ of the middle vertex $n$ is a group of order $2$ 
and its group algebra $\kk\stab(n)$ is  local if and only if $\chara \kk =2$. 

In this section, 
we deal with the Nakayama twisted algebra $\Pi(\AA_{2n+1} )* \agl{\nu}$ in the case $\chara \kk =2$.  
A key observation given in the following lemma tells that 
 $\Pi(\AA_{2n+1} )* \agl{\nu}$ is Morita equivalent to 
 the preprojective algebra $\Pi(\CC_{n})$ of type $\CC_{n}$ in the sense of Geiss-Leclerc-Schr\"{o}er in \cite{GLS}.  
 Thus the same method with the previous section determines  the posets of support $\tau$-tilting modules and 
 silting complexes over $\Pi(\CC_{n})$.

\begin{lemma}\label{202405221753}
Assume that  $\chara \kk=2$. 
Then the following assertions hold.

\begin{enumerate}[(1)]
\item The action $\agl{\nu} \curvearrowright \Pi(\AA_{2n+1})$ preserves the canonical idempotents and has local stabilizers. 

\item The skew group algebra $A = \Pi(\AA_{2n+1}) * \agl{\nu}$ is Morita equivalent to the generalized preprojective algebra $\Pi(\CC_{n})$ of type $\CC_{n}$ in the sense of \cite{GLS}. 
\end{enumerate}

\end{lemma}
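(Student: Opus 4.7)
For part (1), the Nakayama automorphism $\nu$ of $\Pi(\AA_{2n+1})$ acts on the set $\sfE = \{e_1, \ldots, e_{2n+1}\}$ of canonical idempotents by $\nu(e_i) = e_{2n+2-i}$. In particular $\nu$ permutes $\sfE$, fixes the unique middle vertex $e_{n+1}$, and swaps the remaining pairs $(e_i, e_{2n+2-i})$. The stabilizers are trivial except at $e_{n+1}$, where $\stab(e_{n+1}) = \agl{\nu}$, with group algebra $\kk\agl{\nu} \cong \kk[t]/(t^2-1)$. Since $\chara \kk = 2$, we have $t^2 - 1 = (t-1)^2$, so $\kk[t]/(t-1)^2$ is local. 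Hence the action preserves idempotents and has local stabilizers.

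For part (2), I will follow the blueprint of the proof of Lemma \ref{202405221519}, the main new ingredient being a nilpotent loop at the fixed vertex $e_{n+1}$. Take representatives $\{e_1, \ldots, e_{n+1}\}$ of $\sfE/\agl{\nu}$ and set $\epsilon := \sum_{i=1}^{n+1} e_i$. By part (1) combined with Lemma \ref{202309301224}, the algebra $A' := \epsilon A \epsilon$ is a basic algebra that is Morita equivalent to $A$, so it is enough to identify $A'$ with $\Pi(\CC_n)$ as a quiver algebra with relations.

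I will define the following elements of $A'$: the doubled-arrow generators $\beta_i := \alpha_i$ and $\beta_i^* := \alpha_i^*$ for $i=1,\ldots,n$, and at the fixed vertex $n+1$ the element $\varepsilon := e_{n+1} + e_{n+1} * \nu \in e_{n+1}Ae_{n+1}$. A direct calculation using $\nu^2 = 1$ gives $\varepsilon^2 = 2e_{n+1} - 2(e_{n+1}*\nu) = 0$ in characteristic $2$, which provides the loop-squaring relation of $\Pi(\CC_n)$ of \cite{GLS}. The mesh relations at the vertices $1, \ldots, n$ then translate directly from the mesh relations of $\Pi(\AA_{2n+1})$, exactly as in Lemma \ref{202405221519}. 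The only genuinely new relation to check is the GLS mesh relation at the distinguished long-root vertex $n+1$, which couples $\varepsilon$ to the neighbouring arrows; this will follow from the mesh identity $\alpha_n^*\alpha_n = \alpha_{n+1}\alpha_{n+1}^*$ in $\Pi(\AA_{2n+1})$ together with the $\nu$-twist relating $\alpha_{n+1},\alpha_{n+1}^*$ to $\alpha_n^*, \alpha_n$ inside $A$.

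These assignments define a surjective algebra homomorphism $f : \Pi(\CC_n) \to A'$, and bijectivity will be obtained by a dimension count: $\dim_\kk A = 2\dim_\kk \Pi(\AA_{2n+1})$, and Morita reduction along $\epsilon$ yields an explicit formula for $\dim_\kk A'$ which matches the known dimension of $\Pi(\CC_n)$. The main obstacle I anticipate is the bookkeeping at vertex $n+1$: making sure the GLS mesh relation is translated into precisely the identity one obtains after replacing $\alpha_{n+1}$, $\alpha_{n+1}^*$ by their $\nu$-conjugates, so that no extra relation is introduced and the dimensions agree.
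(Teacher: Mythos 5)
Your proposal is correct and follows essentially the same route as the paper, which itself only records that (1) follows from the locality of $\kk\stab(e_{n+1})=\kk[\ZZ/2\ZZ]$ in characteristic $2$ (Lemma \ref{2023100319381}) and that (2) is ``a direct computation similar to that in the proof of Lemma \ref{202405221519}''; your outline of the Morita reduction along $\epsilon=\sum_{i=1}^{n+1}e_i$, the nilpotent loop $\varepsilon$ at the fixed vertex with $\varepsilon^2=2\varepsilon=0$, and the surjectivity-plus-dimension-count argument is exactly that computation. (The displayed sign in $\varepsilon^2$ is a typo but immaterial in characteristic $2$.)
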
 

\begin{proof}
(1) follows from Lemma \ref{2023100319381}. 

(2) can be proved a direct computation similar to that in the proof of  Lemma \ref{202405221519}.  
\end{proof}

\begin{remark}
After finishing the first draft of this paper, R. Wang notified to us that she and X-W Chen \cite[Theorem B]{CW2} independently obtained 
a general result which gives a relationship between  skew group algebras of preprojective algebras (which is not necessarily the action on Nakayama automorphism) and 
the generalized preprojective algebras  in the sense of Geiss-Leclerc-Schr\"{o}er. 
We leave it to a future work to obtain a generalization of Theorem \ref{202405221756} below by applying  our results to the results of \cite{CW2}.
\end{remark}

As a consequence, we obtain the following result by the same argument of Theorem \ref{typeL}.

\begin{theorem}\label{202405221756}
Assume that $\chara \kk =2$. 
Let $\Pi(\CC_{n})$ be the preprojective algebra of type $\CC_n$. 
\begin{itemize}
\item[(1)] We have the following  anti-isomorphism of posets
$$W\to  \sttilt\Pi(\CC_n).$$
\item[(2)] We have the following  anti-isomorphism of posets
$$B\to  \silt \Pi(\CC_n).$$
\end{itemize}
\end{theorem}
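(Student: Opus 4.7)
The strategy is to mirror the proof of Theorem \ref{typeL}, replacing the pair $(\Pi(\AA_{2n}), \Pi(\LL_n))$ by $(\Pi(\AA_{2n+1}), \Pi(\CC_n))$ and invoking Lemma \ref{202405221753} in place of Lemma \ref{202405221519}. The hypothesis $\chara \kk = 2$ enters precisely through Lemma \ref{202405221753}(1), which guarantees that the non-trivial stabilizer $\stab(n) = \langle \nu \rangle$ at the fixed middle vertex has a local group algebra (by Lemma \ref{2023100319381}), so that the action of $\langle \nu \rangle$ on $\Pi(\AA_{2n+1})$ preserves canonical idempotent elements and has local stabilizers.

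First I would apply Theorem \ref{202405081500} to the Frobenius algebra $\Lambda := \Pi(\AA_{2n+1})$ equipped with the Nakayama automorphism $\nu$ of order two. The required hypotheses are (i) the action preserves idempotents and has local stabilizers, supplied by Lemma \ref{202405221753}(1), and (ii) $\Pi(\AA_{2n+1})$ is tilting discrete, which is classical. This yields the poset isomorphisms
\[
\twotilt \Pi(\AA_{2n+1}) \cong \twotilt A \cong \sttilt A, \qquad \tilt \Pi(\AA_{2n+1}) \cong \silt A,
\]
where $A := \Pi(\AA_{2n+1}) * \langle \nu \rangle$. Combining with the Morita equivalence $A \sim \Pi(\CC_n)$ of Lemma \ref{202405221753}(2), one obtains $\twotilt \Pi(\AA_{2n+1}) \cong \sttilt \Pi(\CC_n)$ and $\tilt \Pi(\AA_{2n+1}) \cong \silt \Pi(\CC_n)$.

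It then remains to produce anti-isomorphisms $W \to \twotilt \Pi(\AA_{2n+1})$ and $B \to \tilt \Pi(\AA_{2n+1})$, which is the analogue of Theorem \ref{tau-weyl} for type $\AA_{2n+1}$. Via the Mizuno and Aihara-Mizuno bijections, $\sttilt \Pi(\AA_{2n+1})$ and $\tilt \Pi(\AA_{2n+1})$ are anti-isomorphic to the Weyl group and braid group of type $\AA_{2n+1}$ respectively. By Lemma \ref{202405081445}, the subposets $\twotilt \Pi(\AA_{2n+1}) = (\2silt \Pi(\AA_{2n+1}))^{\nu}$ and $\tilt \Pi(\AA_{2n+1}) = (\silt \Pi(\AA_{2n+1}))^{\nu}$ correspond under these bijections to the $\nu$-fixed subgroup, which by the standard Dynkin folding $\AA_{2n+1} \to \CC_n$ is exactly the Weyl group $W$ (resp.\ braid group $B$) of type $\BB_n/\CC_n$.

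The main delicate point, just as in the $\LL_n$ case, is to verify that the $\nu$-equivariance of Mizuno's explicit bijection matches the folding correspondence on the Weyl and braid groups; this is essentially classical and should pose no genuine difficulty. The rest of the argument is a formal assembly of Theorem \ref{202405081500}, Lemma \ref{202405221753}, and the Mizuno/Aihara-Mizuno classification, with the assumption $\chara \kk = 2$ entering only to ensure locality of the stabilizer group algebra.
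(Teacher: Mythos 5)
Your proposal is correct and follows essentially the same route as the paper, whose proof of this theorem is literally ``by the same argument of Theorem \ref{typeL}'': invoke Lemma \ref{202405221753} (with $\chara\kk=2$ ensuring local stabilizers at the fixed middle vertex) in place of Lemma \ref{202405221519}, apply Theorem \ref{202405081500} together with the Morita equivalence to $\Pi(\CC_n)$, and conclude via the Mizuno/Aihara--Mizuno classification of $\nu$-stable objects over $\Pi(\AA_{2n+1})$, exactly as you describe.
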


\begin{remark}
Fu-Geng \cite{FG} studied $\tau$-tilting theory of the generalized preprojective algebras $\Pi(C,D)$ in the sense of \cite{GLS} over a field $\kk$ of arbitrary characteristic 
and
established a bijective correspondence 
between the support $\tau$-tilting modules over $\Pi(C,D)$ and the corresponding Weyl group $W(C)$. 

The poset isomorphism of (1) in the above theorem can be looked as an refinement of the bijection by Geng-Fu for $\Pi(\CC_{n})$ in the case $\chara \kk =2$.  
We can expect that  there are  the similar poset isomorphisms for any   $\Pi(C,D)$ over a base field of arbitrary characteristic.
\end{remark}

We note that if $\chara \kk \neq 2$, then the Nakayama twisted algebra $\Pi(\AA_{2n+1})*\agl{\nu}$ is Morita equivalent to 
the preprojective algebra of type $\CC_{n}$ given in \cite[Section 7]{ES}.

\subsection{The folded mesh algebra $K^{(m)}(\AA_{n})$ of type $\AA_{n}$}

Adachi-Kase \cite{Adachi-Kase} constructed two examples of  algebras which are tilting discrete but not silting discrete.  
One of them is given by
the $m$-fold mesh algebra $K^{(m)}(\AA_{n})$ of type $\AA_{n}$, which is called the stable Auslander algebra in \cite{Adachi-Kase}. 
Indeed, 
Adachi-Kase \cite[Theorem 4.1]{Adachi-Kase} proved that 
$K^{(m)}(\AA_{n})$  is tilting discrete, but not silting discrete, 
provided that $n, m\geq 5$, $n$ is odd  and $m$ is coprime to $n-1$. 

It is natural to ask how is the case that $n$ is even. 
The same method with  Adachi-Kase  works and  
 we can show that $K^{(m)}(\AA_{n})$ is not silting discrete even  in the case that $n$ is even.   
 As to tilting discreteness, Adachi-Kase proved it by detailed and long analysis of tilting modules over $K^{(m)}(\AA_{n})$. 
The aim of this section is that using our methods developed in the paper, we show that 
in the case that $n$ is even,  the tilting poset of $K^{(m)}(\AA_{n})$ is isomorphic to that of $\Pi(\AA_{n})$ 
and  $K^{(m)}(\AA_{n})$ is tilting discrete.

\begin{theorem}\label{202404101811}
Let $K := K^{(m)}(\AA_{n})$ of the $m$-fold mesh algebra of type $\AA_{n}$. 
Assume that $n$ is even and that $m$ is coprime to $n-1$. 
Then, 
there exists a poset isomorphism $\tilt K \xrightarrow{\cong } \tilt \Pi(\AA_{n})$ that sends $K[l]$ to $\Pi(\AA_{n})[l]$ for any $l\in \ZZ$.  
Consequently $K$ is tilting discrete. 
\end{theorem}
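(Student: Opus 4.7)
The plan is to realize $K = K^{(m)}(\AA_{n})$ as a skew group algebra extension of $\Pi(\AA_{n})$ by a cyclic group action, and then apply the general machinery of Section \ref{202404041518}. Concretely, I expect $K^{(m)}(\AA_{n})$ to be Morita equivalent to $\Pi(\AA_{n}) * \agl{\sigma}$, where $\sigma$ is an algebra automorphism of $\Pi(\AA_{n})$ of order $m$ arising from (a power of) the Nakayama automorphism combined with a rotational automorphism coming from the AR-translation. This is analogous to the identification $\Pi(\AA_{2n}) * \agl{\nu} \sim \Pi(\LL_n)$ established in Lemma \ref{202405221519}, and the coprimality hypothesis $(m, n-1) = 1$ is expected to enter here to pin down the precise order of $\sigma$.

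Once this identification is available, the next step is to verify the hypotheses of Theorem \ref{202405081500} for the action $\agl{\sigma} \curvearrowright \Pi(\AA_n)$: preservation of the canonical idempotents is automatic since $\sigma$ acts as a permutation on the vertices of the quiver, and the key role of the parity assumption is that when $n$ is even the induced permutation on $\{1,\dots,n\}$ has no fixed point, so every stabilizer is trivial and in particular the corresponding group algebra is $\kk$, which is local. This is exactly where the hypothesis that $n$ is even is used; when $n$ is odd a middle vertex would be fixed and the stabilizer subgroup could have non-local group algebra if $\chara\kk$ does not divide $m$.

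With these verifications in place, Theorem \ref{202405081500} applied to the tilting discrete algebra $\Pi(\AA_n)$ (tilting discrete by the work of Aihara--Mizuno, cf.\ Theorem \ref{tau-weyl}) yields a poset isomorphism $\tilt \Pi(\AA_n) \cong \silt A$, where $A = \Pi(\AA_n) * \agl{\sigma}$. Moreover, Corollary \ref{202403311407} ensures that $A$ is symmetric, which by Lemma \ref{202405081445} forces $\silt A = \tilt A$. Composing with the Morita equivalence $A \sim K$, one obtains $\tilt \Pi(\AA_n) \cong \tilt K$. Tracing the isomorphism through the induction functor $\Ind_{\Pi(\AA_n)}^{A}$ and the Morita equivalence shows that $\Pi(\AA_n)[l]$ corresponds to $A[l]$ and hence to $K[l]$, which is the required compatibility; tilting discreteness of $K$ is then immediate from that of $\Pi(\AA_n)$ transported across the poset isomorphism.

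The main obstacle will be the first step: explicitly identifying the automorphism $\sigma$ of $\Pi(\AA_n)$ that realizes $K^{(m)}(\AA_n)$ as a skew group algebra. One must verify it has order exactly $m$ (this is where the coprimality $(m,n-1)=1$ is presumably crucial, since $\Pi(\AA_n)$ has natural periodicity tied to $n-1$), and, in order to apply the Nakayama-automorphism machinery of Section \ref{202404041518} directly, that $\sigma$ can be taken to be a Nakayama automorphism of $\Pi(\AA_n)$; otherwise a mild extension of that framework, using only that $\sigma$ preserves the Nakayama form in the sense of Proposition \ref{202404041529}, will be needed.
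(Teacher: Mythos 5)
Your opening step --- realizing $K^{(m)}(\AA_{n})$ as a skew group algebra $\Pi(\AA_{n})*\agl{\sigma}$ --- goes in the wrong direction and cannot be carried out. The covering relation is $\Pi(\AA_{n})=K(\ZZ\AA_{n})/\agl{\tau}=K^{(m)}(\AA_{n})/\agl{\tau}$, so $\Pi(\AA_{n})$ is an \emph{orbit algebra} of $K^{(m)}(\AA_{n})$; by the orbit-category/skew-group dictionary used in Lemma \ref{202405231145}, a skew group algebra over $\Pi(\AA_{n})$ by a vertex-permuting cyclic group is Morita equivalent to a further quotient of $\Pi(\AA_{n})$, never to its $m$-fold cover $K^{(m)}(\AA_{n})$. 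The only way to recover a cover from its orbit algebra is the dual construction (smash product with $\kk(\ZZ/m)^{*}$, cf.\ Appendix \ref{202404011608}), and that action fixes every vertex, so the local-stabilizer hypothesis of Theorem \ref{202405081500} fails except in degenerate characteristics. Your claim that $\sigma$ permutes the vertices fixed-point-freely when $n$ is even conflates this hypothetical order-$m$ automorphism with the order-$2$ Nakayama reflection $i\mapsto n+1-i$; the parity of $n$ is about the latter. Finally, your chain would yield $\silt K=\tilt K\cong\tilt\Pi(\AA_{n})$ and hence silting discreteness of $K$, contradicting the Adachi--Kase result quoted in this very section that $K^{(m)}(\AA_{n})$ is tilting discrete but \emph{not} silting discrete; this alone shows $K$ cannot be a Nakayama-twist skew group algebra of $\Pi(\AA_{n})$ (such algebras are symmetric by Corollary \ref{202403311407}).

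The paper's proof keeps $K$ as the \emph{base} algebra and twists upward: it forms $K':=K*\agl{\nu_{K}}$, identifies it (Lemma \ref{202405231145}) with the orbit category $K(\ZZ\AA_{n})/\agl{\tau^{m},\nu}$, and uses $\nu^{2}=\tau^{-n+1}$ together with $\gcd(m,n-1)=1$ to get $\agl{\tau^{m},\nu}=\agl{\tau,\nu}$, whence $K'\sim\Pi(\AA_{n})*\agl{\nu}\sim\Pi(\LL_{n/2})$ ($n$ even enters here via Lemma \ref{202405221519}). Then Theorem \ref{typeL} gives $\silt K'\cong\tilt\Pi(\AA_{n})$, and the conclusion follows from $\tilt K=(\silt K)^{\nu}\cong(\silt K')^{\agl{\nu}\mod}=\silt K'$, the last equality by Theorem \ref{202403311841}. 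If you want to salvage your write-up, this reversal of roles --- $K$ as $\Lambda$ and $\Pi(\LL_{n/2})$ as $\Lambda*G$, rather than $\Pi(\AA_{n})$ as $\Lambda$ and $K$ as $\Lambda*G$ --- is the essential correction.
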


Before giving a proof, we recall basic facts about the folded mesh algebras. 
We basically follows the notation and conventions of \cite[Section 3, 4]{AJS: symmetry}.

Let $K(\ZZ\AA_{n})$ be the mesh algebra of the stable translation quiver $\ZZ \AA_{n}$ and $\tau$ the canonical translation of $\ZZ\AA_{n}$. 
Then recall that for a non-negative integer $m$,  
the $m$-folded mesh algebra $K^{(m)}(\AA_{n})$ is defined to be the orbit category $K^{(m)}(\AA_{n})  := K(\ZZ\AA_{n})/\agl{\tau^{m}}$ 
(regarded as an algebra).  
We  also recall that the $1$-folded mesh algebra $K^{(1)}(\AA_{n}) $ is nothing but the preprojective algebra $\Pi(\AA_{n})$.

We need the following lemma. 

\begin{lemma}\label{202405231145}
The Nakayama twisted algebra $K^{(m)}(\AA_{n})* \agl{\nu}$ is Morita equivalent to the orbit category $K(\ZZ\AA_{n})/\agl{\tau^{m}, \nu}$.
\end{lemma}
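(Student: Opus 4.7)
The plan is to realize both algebras as orbit categories of the mesh category $K(\ZZ\AA_n)$ under free group actions, and then invoke Asashiba's comparison between orbit categories and skew group algebras.

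First, I would lift the Nakayama automorphism $\nu$ of $\Lambda := K^{(m)}(\AA_n)$ to a $\kk$-linear automorphism $\tilde\nu$ of $K(\ZZ\AA_n)$ commuting with $\tau$. A natural candidate comes from a reflection of $\ZZ\AA_n$ of the form $(i,k)\mapsto (n+1-i,\, k+c)$ for an appropriate integer $c$; this commutes with $\tau$ and descends to $\nu$ on $K(\ZZ\AA_n)/\langle\tau^m\rangle$. Setting $\tilde G := \langle\tau^m, \tilde\nu\rangle \leq \Aut K(\ZZ\AA_n)$, the iterated formation of orbit categories gives
\[
K(\ZZ\AA_n)/\langle\tau^m,\nu\rangle \;=\; K(\ZZ\AA_n)/\tilde G \;=\; \bigl(K(\ZZ\AA_n)/\langle\tau^m\rangle\bigr)\big/\langle\nu\rangle \;=\; \Lambda/\langle\nu\rangle,
\]
where the second equality holds by definition of the iterated orbit construction.

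Second, I would verify that the induced action of $\langle\nu\rangle$ on the set of primitive idempotents of $\Lambda$ is free. This is precisely where the hypothesis that $n$ is even enters: on the vertices of the translation quiver $\ZZ\AA_n/\langle\tau^m\rangle$ underlying $\Lambda$, the reflection $\tilde\nu$ has no fixed orbit exactly when $n$ is even, while the coprimality of $m$ and $n-1$ ensures that no $\tau^m$-orbit contains a $\tilde\nu$-fixed point. Together these numerical conditions give a free $\langle\nu\rangle$-action on vertices.

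Third, once freeness of the $\langle\nu\rangle$-action on idempotents is established, I would apply Asashiba's theorem \cite{Asashiba}, which asserts that for a finite group $G$ acting freely on a basic locally bounded $\kk$-category $\mathcal{A}$, the orbit category $\mathcal{A}/G$ is Morita equivalent to the skew group algebra $\mathcal{A} \ast G$. Taking $\mathcal{A} = \Lambda$ and $G = \langle\nu\rangle$ yields
\[
\Lambda \ast \langle\nu\rangle \;\sim_{\textup{Morita}}\; \Lambda/\langle\nu\rangle \;=\; K(\ZZ\AA_n)/\langle\tau^m,\nu\rangle,
\]
as desired. The main obstacle will be the careful construction of the lift $\tilde\nu$ and the rigorous verification that the $\langle\nu\rangle$-action on vertices of $\Lambda$ is free under the stated parity and coprimality assumptions; once freeness is in hand, Asashiba's identification applies formally and the Morita equivalence is automatic.
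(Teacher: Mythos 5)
Your overall strategy is the same as the paper's: lift $\nu$ to $K(\ZZ\AA_{n})$, identify the iterated orbit category $(K(\ZZ\AA_{n})/\agl{\tau^{m}})/\agl{\nu}$ with $K(\ZZ\AA_{n})/\agl{\tau^{m},\nu}$, and invoke Asashiba's comparison between orbit categories and skew group algebras. The lift of $\nu$ that you construct by hand is exactly what the paper gets from \cite[Theorem 4.2]{AJS: symmetry}, so that part is fine.

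The genuine problem is your second step. The lemma is stated with no hypotheses on $n$ or $m$, but your argument hinges on freeness of the $\agl{\nu}$-action on the primitive idempotents of $K^{(m)}(\AA_{n})$, which you extract from ``$n$ even'' (a hypothesis that belongs to Theorem \ref{202404101811}, not to this lemma). The version of the orbit-category/skew-group-algebra comparison that requires a free action is Gabriel's classical Galois covering theory; the paper instead appeals to \cite[Definition 3.6]{Asashiba}, whose whole point is to define the orbit category so that the Morita equivalence with the skew group algebra holds for an arbitrary (not necessarily free) action of a finite group. With that formulation the freeness verification is unnecessary, and without it your proof only establishes the lemma under extra numerical hypotheses — for instance it says nothing about $n$ odd, where the reflection fixes the middle row and the action genuinely fails to be free. (A minor further point: for $n$ even the reflection $i\mapsto n+1-i$ already has no fixed vertices, so freeness on vertices follows from parity alone; the coprimality of $m$ and $n-1$ plays no role there.) To prove the lemma as stated you should replace the free-action covering theorem by Asashiba's general one.
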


\begin{proof}
By general relationship between orbit categories and skew group algebras, given in \cite[Definition 3.6]{Asashiba}, 
we see that 
the Nakayama twisted algebra $K^{(m)}(\AA_{n})* \agl{\nu}$ is Morita equivalent to the orbit category $(K(\ZZ\AA_{n})/\agl{\tau^{m}})/\agl{\nu}$. 
On the other hand, 
by \cite[Theorem 4.2]{AJS: symmetry}, the Nakayama automorphism $\nu$ of $K^{(m)}(\AA_{n})$ is induced from that of $K(\ZZ\AA)$. 
It follows that 
$(K(\ZZ\AA_{n})/\agl{\tau^{m}})/\agl{\nu} \cong K(\ZZ\AA_{n})/\agl{\tau^{m}, \nu}$. 
\end{proof}

Now we proceed the proof of the main theorem of this section.

\begin{proof}[Proof of Theorem \ref{202404101811}]
First, we claim that the Nakayama twisted algebra $K' := K*\agl{\nu}$ is Morita equivalent to $\Pi(\AA_{n}) * \agl{\nu}$ and hence is Morita equivalent to $\Pi(\LL_{n/2})$. 
Indeed, 
from the explicit description of the Nakayama functor $\nu$ of $K(\ZZ\AA_{n})$ given in \cite[Theorem 4.2]{AJS: symmetry}, 
we observe that $\nu^{2} = \tau^{-n+1}$. 
Since $m$ and $n-1$ are coprime to each other, we have $\tau\in \agl{\tau^{m}, \nu}$. 
It follows that $\agl{\tau^{m}, \nu} = \agl{\tau, \nu}$ 
and that $K(\ZZ\AA_{n})/\agl{\tau^{m}, \nu} = K(\ZZ\AA_{n})/\agl{\tau,\nu}$. 
Thanks to Lemma \ref{202405231145} and the fact $K^{(1)}(\AA_{n}) \cong \Pi(\AA_{n})$, we can conclude that the desired statement holds. 

It follows from the claim and Theorem \ref{typeL} that $\silt K' \cong  \silt \Pi(\LL_{n/2}) \cong \tilt \Pi(\AA_{n})$.
We note that under this isomorphism, $K'$ corresponds to $\Pi(\AA_{n})$
Therefore, 
the interval $[K'[l], K']$ in  $(\silt K')^{\agl{\nu}\mod}\subset \silt K'$ is finite for any $l\geq 0$. 
Since
$\tilt K = (\silt K)^{\nu} \cong (\silt K')^{\agl{\nu}\mod}$, we see that $K$ is tilting discrete. 
By \cite[Theorem 4.2]{AJS: symmetry}, the action $\agl{\nu} \curvearrowright K$ preserves the canonical idempotent elements and has trivial stabilizers. 
Finally applying Theorem \ref{202403311841}, we conclude that all silting complexes over $K'$ is $\agl{\nu}\mod$-stable and
 $\tilt K$ is 
isomorphic to $\tilt \Pi(\AA_{n})$. 
\end{proof}

In a subsequent work, we study  $m$-fold mesh algebras of other types.

\appendix 
\section{Reinterpretation to  $G$-graded algebras}\label{202404011608}

In this section, 
we briefly explain that our results given in the main body of the paper 
reinterpreted to results for a finite dimensional $G$-graded algebra $A=\bigoplus_{g \in G}A_{g}$ via Cohen-Montgomery duality \cite{CM}. 

We  also explain that in the special case that $A$ is strongly $G$-graded, 
we can obtain results for the canonical inclusion   
$A_{e}\hookrightarrow A$ where $A_{e}$ denotes the degree $e$-part of $A$ (where $e$ is the unit element of $G$). 
These  are  generalizations of the results given in the paper \cite{BMM}  
in which the authors dealt with a finite dimensional strongly $G$-graded algebra $A$ and often assumed that $A_{e}$ is selfinjective and that $\chara \kk \nmid |G|$. Given a group $H$ and a normal subgroup $N$ of $H$, the group algebra $\kk H$ acquires a canonical $G:= H/N$-grading 
such that $(\kk H)_{e} = \kk N$. 
Thus, our results can be regarded as generalizations of results given in \cite{KK} for a group extension $N \triangleleft H$.

\subsection{$G$-graded algebras} 
Let $G$ be a finite group and 
 $A:= \bigoplus_{g \in G}A_{g}$ a finite dimensional $G$-graded algebra 
and $\grmod A$ the category of finite dimensional $G$-modules.

We denote the dual Hopf algebra of the group Hopf algebra $\kk G$ by  $\kk G^{*}:= \Hom_{\kk}(\kk G, \kk)$. 
By \cite[Proposition 1.3]{CM}, a finite dimensional $G$-graded algebra $A = \bigoplus_{g\in A}A_{g}$ may be  regarded as $\kk G^{*}$-module algebra. 
Let $\Lambda := A\# \kk G^{*}$ be the smash product \cite[p. 241]{CM}. 
Then by \cite[Theorem 2.2]{CM} there is an equivalence $\mod \Lambda \simeq \grmod A$. 

By \cite[Lemma 3.3]{CM}, the group $G$ acts on $\Lambda$. 
One of the main result of the paper \cite[Theorem 3.5(Duality for coactions)]{CM} tells  that 
the skew group algebra $\tilde{A} := \Lambda*G=(A\# \kk G^{*})*G$ is isomorphic to the matrix algebra $M_{|G|}(A)$ of the order $|G|$. 
In particular the algebra $\tilde{A}$ is Morita equivalent to $A$ and we have an equivalence 
$\mod A \simeq \tilde{A}\mod$. 

Then it is tedious but straightforward to check that 
the induction functor $\Ind_{\Lambda}^{\tilde{A}}: \mod \Lambda \to \tilde{A}\mod$ 
corresponds to 
the functor  $U: \grmod A \to \mod A$  which forgets the grading of $G$-graded $A$-modules 
via the equivalences given above.  
Namely the left diagram is commutative up to natural isomorphism:
\[
\begin{xymatrix}{
\mod \widetilde{A}  \ar@{-}[r]^{\sim } & \mod A \\
\mod \Lambda \ar[u]^{\Ind_{\Lambda}^{\widetilde{A}}} \ar@{-}[r]^{\sim} &
\grmod A \ar[u]_{U},
}\end{xymatrix}
\begin{xymatrix}{
\mod \widetilde{A}  \ar@{-}[r]^{\sim} \ar[d]_{\Res_{\Lambda}^{\tilde{A}}} & \mod A  \ar[d]^{V}\\
\mod \Lambda  \ar@{-}[r]^{\sim} &
\grmod A.
}\end{xymatrix}
\]

It is also straightforward to check that the functor $V: \mod A \to \grmod A$ defined below  
corresponds to the restriction functor $\Res_{\Lambda}^{\tilde{A}}: \tilde{A}\mod \to \mod \Lambda$ 
via the above equivalences: 
Let $N$ be an (ungraded) $A$-module, we define a graded $A$-module $V(N)$ in the following way. 
The underlying $\kk$-vector space of $V(N)$ is defined to be $\kk G \otimes N$. 
For $g \in G$, the degree $g$-component $V(N)_{g}$ is defined to be $V(N)_{g} := g\otimes N$. 
Finally the action of a homogeneous element $a_{h} \in A_{h}$ on $g \otimes n$ for $n\in N$ is given by 
\[
a_{h} \cdot_{V(N)} (g\otimes n) := hg \otimes a_{h}n. 
\]
The assignment $N \mapsto V(N)$ gives a functor $V: \mod A \to \grmod A$.

\begin{definition}\label{202309161711}
Let $N$ be an (ungraded) $A$-module and $X$ a $G$-module. 
We define an $A$-module $X \otimes_{\kk}^{G} N$ in the following way: 
The underlying $\kk$-vector space is defined to be $X \otimes_{\kk} N$. 
The action of a homogeneous element $a_{g} \in A_{g}$ is given by 
\[
a_{g} \cdot(x \otimes n) := gx \otimes a_{g}n. 
\]
\end{definition}

We note that $UV(N) = \kk G\otimes_{\kk}^{G} N$. 

\begin{definition}\label{202309191621}
An $A$-module $N$ is called ($\mod G$)-\emph{stable} 
if $X \otimes_{\kk}^{G} N \in \add N$ for any $G$-modules $X$.
\end{definition}

We explain $G$-stability of $G$-graded $A$-modules $M$. 
Let $ g\in G$. We define the degree shift $M(g)$ of a graded $A$-module $M=\bigoplus_{g \in G} M_{g}$ in the following way: 
The underlying $A$-module of $M(g)$ is defined to be that of $M$. 
The grading is defined by $(M(g))_{h} := M_{hg^{-1}}$. 
It is straightforward to check that under the identification of graded $A$-modules and $\Lambda= A\# \kk G^{*}$-modules 
the degree shift $M(g)$ corresponds to the twist ${}_{g} M$ given in Section \ref{202403291718}.
\[
\grmod A \leftrightarrow \mod \Lambda, \ M(g) \leftrightarrow {}_{g} M. 
\]

Thus we introduce the following terminology.

\begin{definition}\label{202309161819}
A graded $A$-module $M$ is called $G$-\emph{stable} 
if $M \cong M(g)$ for any $g \in G$.
\end{definition} 

Using these preparations, we can reinterpret the results in the main body of the paper  for 
$G$-graded and ungraded $A$-modules. 
Among other things, we point out the following reinterpretation of Theorem \ref{202309301601}. 
We denote by $(\grsttilt A)^{G}$ the poset of $G$-stable basic graded support $\tau$-tilting $\Lambda$-module, 
by $(\sttilt A)^{\mod G}$ the poset of \textup{(}$\mod G$\textup{)}-stable basic support $\tau$-tilting $A$-modules. 
Then

\begin{theorem}\label{202404010037}
The adjoint pair $(U,V)$ induces an isomorphism of posets 
\[
U: (\grsttilt A)^{G} \longleftrightarrow (\sttilt A)^{\mod G} :V.
\]
\end{theorem}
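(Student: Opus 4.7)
The plan is to deduce this theorem from Theorem \ref{202309301601} applied to the skew group algebra extension $\Lambda \subset \tilde{A} = \Lambda * G$, via the Cohen--Montgomery correspondences recalled in the preceding paragraphs. Essentially all of the conceptual content is already assembled: what remains is to check that the two stability notions transport correctly, and that Morita equivalence and the change-of-category functors respect the $\tau$-tilting posets.

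First I would note that, since Morita equivalence identifies (basic) support $\tau$-tilting pairs in a poset-preserving way, the Cohen--Montgomery equivalences $\mod \Lambda \simeq \grmod A$ and $\mod \tilde{A} \simeq \mod A$ give poset isomorphisms $\sttilt \Lambda \cong \grsttilt A$ and $\sttilt \tilde{A} \cong \sttilt A$. Moreover, the commutative diagrams displayed just before Definition \ref{202309161711} show that $\Ind_{\Lambda}^{\tilde{A}}$ corresponds to $U$ and $\Res_{\Lambda}^{\tilde{A}}$ corresponds to $V$ under these equivalences. Next I would verify the two stability compatibilities. For $G$-stability, the text already observes that the twist ${}_{g}M$ of a $\Lambda$-module (Section \ref{202403291718}) corresponds to the degree shift $M(g)$ of the associated graded $A$-module, so Definition \ref{202309151111} on the $\Lambda$-side matches Definition \ref{202309161819} on the $\grmod A$-side; hence $(\sttilt \Lambda)^{G} \cong (\grsttilt A)^{G}$ as posets. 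For $(\mod G)$-stability, I would check by direct comparison of formulas that the functor $X \otimes_{\kk}^{G} -$ on $\mod A$ from Definition \ref{202309161711} is identified, under the Morita equivalence $\mod \tilde{A} \simeq \mod A$, with the functor $X \otimes_{\kk}^{G} -$ on $\mod \tilde{A}$ from Definition \ref{202309161735}. Granted this, Definition \ref{202309191621} matches Definition \ref{202309171251}, yielding $(\sttilt \tilde{A})^{\mod G} \cong (\sttilt A)^{\mod G}$.

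Combining these identifications with the poset isomorphism $\Ind_{\Lambda}^{\tilde{A}}: (\sttilt \Lambda)^{G} \leftrightarrow (\sttilt \tilde{A})^{\mod G} : \Res_{\Lambda}^{\tilde{A}}$ from Theorem \ref{202309301601}, one transports everything to $\grmod A$ and $\mod A$ to obtain the asserted isomorphism $U: (\grsttilt A)^{G} \leftrightarrow (\sttilt A)^{\mod G} : V$. The only step requiring genuine care, and hence the main obstacle, is the compatibility of the two $(\mod G)$-actions: one must trace the Cohen--Montgomery isomorphism $\tilde{A} \cong M_{|G|}(A)$ carefully enough to see that the action $a_{g}\cdot (x\otimes n) = gx\otimes a_{g}n$ on the $A$-side matches the action $(r*h)\cdot (x\otimes n) = h(x)\otimes (r*h)n$ on the $\tilde{A}$-side. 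The remaining ingredients are essentially bookkeeping, and once the compatibilities are established, the poset structures (whose inequalities are detected by surjections between modules) are transported for free by the equivalences.
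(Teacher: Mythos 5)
Your proposal is correct and follows essentially the same route as the paper: Appendix \ref{202404011608} derives this theorem precisely by applying Theorem \ref{202309301601} to the extension $\Lambda = A\#\kk G^{*} \subset \tilde{A} = \Lambda * G$ and transporting along the Cohen--Montgomery equivalences, with the identifications of $\Ind_{\Lambda}^{\tilde A}$ with $U$, of $\Res_{\Lambda}^{\tilde A}$ with $V$, of the twist ${}_{g}M$ with the degree shift $M(g)$, and of the two $(\mod G)$-actions left as the ``tedious but straightforward'' checks you correctly single out. You have simply made explicit the bookkeeping the paper leaves implicit.
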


\subsection{Strongly graded case}

Now we discuss the spacial case that  a graded algebra $A = \bigoplus_{g \in G} A_{g}$ is strongly graded \cite[p.245]{CM}.  
In this section, we denote the degree $e$-subalgebra by $\Lambda :=A_{e}$ (where $e$ is the unit element of $G$).  
Then by \cite[Theorem 2.2, Theorem 2.12]{CM}, the category $\grmod A$ is equivalent to $\mod \Lambda$ 
via the functor $(-)_{e}: \grmod A \to \mod \Lambda$ that takes the degree $e$-part of graded $A$-modules. 
Its quasi-inverse is the functor $A\otimes_{\Lambda}-: \mod \Lambda \to \grmod A$. 
\begin{equation}\label{202309161906}
(-)_{e}: \grmod A \rightleftarrows \mod \Lambda :A\otimes_{\Lambda} -
\end{equation}
Let $g\in G$. Under the equivalence, the degree shift functor $(g)$ corresponds to the functor $A_{g^{-1}}\otimes_{\Lambda} -$. 

\begin{definition}\label{202309161916}
A $\Lambda$-module $M$ is called $G$-\emph{stable} 
if $A_{g^{-1}}\otimes_{\Lambda} M \cong M$ for any $g\in G$. 
\end{definition}

The composition  $U \circ (A\otimes_{\Lambda}- ): \mod \Lambda\to \mod A$ is nothing but the induction functor $\Ind_{\Lambda}^{A}$ 
and that  the composition $(-)_{e}\circ V: \mod A \to \mod \Lambda$ is the restriction functor $\Res_{\Lambda}^{A}$. 
\[
\Ind_{\Lambda}^{A} =U \circ (A\otimes_{\Lambda}-) : \mod \Lambda \rightleftarrows  \mod A: (-)_{e} \circ V = \Res_{\Lambda}^{A}.
\]

Using these preparations, we can specialize the results  for a $G$-graded algebra $A$ 
(that are obtained as reinterpretations of results in the main body)  
to  the induction and the reduction functors associated to the canonical injection $\Lambda \hookrightarrow A$. 
As a consequence, we obtain  generalizations of the results given in the paper \cite{BMM}  
in which the authors dealt with a finite dimensional strongly $G$-graded algebra $A$ and often assumed that $\Lambda=A_{e}$ is selfinjective and that $\chara \kk \nmid |G|$. 

Let $H$ be a group and $N$ a normal subgroup of $H$. We set $G:= H/N$. 
Then the group algebra $A:= \kk H$ becomes a strongly $G$-graded algebra such that $A_{e} = \kk N$ by setting $A_{g}:= \kk N \underline{g}$ for $g\in G$ where $\underline{g}$ denotes a representative of $g$. 
Thus, our results can be regarded as generalizations of results given in \cite{KK} for a group extension $N \triangleleft H$.

Among other things, we point out the following theorem obtained as a specialization of Theorem \ref{202404010037}.

\begin{theorem}\label{202404010043}
The adjoint pair $(\Ind_{\Lambda}^{A}, \Res_{\Lambda}^{A})$ induces an isomorphism of posets 
\[
\Ind_{\Lambda}^{A}: (\sttilt \Lambda)^{G} \longleftrightarrow (\sttilt A)^{\mod G} :\Res_{\Lambda}^{A}.
\]
\end{theorem}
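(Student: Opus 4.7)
The plan is to deduce Theorem \ref{202404010043} from Theorem \ref{202404010037} by means of the Cohen-Montgomery equivalence \eqref{202309161906}. More precisely, the strategy is to transport the known poset isomorphism $(\grsttilt A)^{G}\cong (\sttilt A)^{\mod G}$ along the equivalence $(-)_{e}\colon \grmod A\simeq \mod\Lambda$ so that its $G$-stable part matches $(\sttilt\Lambda)^{G}$, and then recognize the resulting composite as the induction/restriction pair.

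First I would check that the equivalence \eqref{202309161906} restricts to a poset isomorphism $(\grsttilt A)^{G}\cong (\sttilt\Lambda)^{G}$. The equivalence is an exact equivalence of module categories and therefore automatically identifies (basic) support $\tau$-tilting objects on both sides, yielding $\grsttilt A\cong \sttilt\Lambda$ as posets. The content to verify is that the $G$-stability conditions of Definitions \ref{202309161819} and \ref{202309161916} correspond. This is immediate from the observation recalled just after \eqref{202309161906}: the degree-shift functor $(g)\colon \grmod A\to \grmod A$ corresponds to $A_{g^{-1}}\otimes_{\Lambda}-\colon \mod\Lambda\to \mod\Lambda$. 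Hence for a graded $A$-module $M$ with $M_{e}$ the corresponding $\Lambda$-module, $M\cong M(g)$ in $\grmod A$ if and only if $M_{e}\cong A_{g^{-1}}\otimes_{\Lambda}M_{e}$ in $\mod\Lambda$. Running over all $g\in G$ gives the desired matching of $G$-stabilities.

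Next I would compose this restricted equivalence with the poset isomorphism of Theorem \ref{202404010037} to obtain
\[
(\sttilt\Lambda)^{G}\xrightarrow{\ A\otimes_{\Lambda}-\ } (\grsttilt A)^{G}\xrightarrow{\ U\ } (\sttilt A)^{\mod G},
\]
with inverse given by $(-)_{e}\circ V$. Using the identifications $\Ind_{\Lambda}^{A}=U\circ (A\otimes_{\Lambda}-)$ and $\Res_{\Lambda}^{A}=(-)_{e}\circ V$ recorded in the paragraph preceding the theorem, the composite is exactly the pair $(\Ind_{\Lambda}^{A},\Res_{\Lambda}^{A})$. Because each factor is a poset isomorphism and each passes through basic representatives in a compatible way, so does the composite.

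The only point requiring a touch of care, and the mild obstacle in the plan, is the bookkeeping with basic parts: the equivalence $(-)_{e}\colon \grmod A\simeq \mod\Lambda$ sends a basic graded support $\tau$-tilting $A$-module to a basic support $\tau$-tilting $\Lambda$-module (and conversely), so no additional basicification is required there; on the other side, Theorem \ref{202404010037} is already formulated at the level of basic modules. Thus the two basic-module normalizations are compatible, and stitching the two isomorphisms together gives precisely the claimed isomorphism of posets between $(\sttilt\Lambda)^{G}$ and $(\sttilt A)^{\mod G}$ realized by $\Ind_{\Lambda}^{A}$ and $\Res_{\Lambda}^{A}$.
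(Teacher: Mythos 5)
Your proposal is correct and follows essentially the same route the paper intends: specialize Theorem \ref{202404010037} through the Cohen--Montgomery equivalence $(-)_{e}\colon \grmod A\simeq \mod\Lambda$ of \eqref{202309161906}, match the two notions of $G$-stability via the correspondence of the degree shift $(g)$ with $A_{g^{-1}}\otimes_{\Lambda}-$, and identify the composites $U\circ(A\otimes_{\Lambda}-)$ and $(-)_{e}\circ V$ with $\Ind_{\Lambda}^{A}$ and $\Res_{\Lambda}^{A}$. The paper leaves these steps implicit (stating the theorem as "a specialization of Theorem \ref{202404010037}"), and your write-up simply makes that specialization explicit.
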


\begin{remark}
Let $\Lambda$ be an algebra with an action  of $G$ and $A:= \Lambda *G$ as in the main body of the paper.  
Then if we define $A_{g} := \Lambda *g$ for $g \in G$,  then   $A:= \Lambda* G$ becomes a  strongly $G$-graded algebra. 
Hence, applying Theorem \ref{202404010043} to this case, we can  recover Theorem \ref{202309301601}.  

In a similar way, we can obtain results of the main body of the paper by first establishing results for a $G$-graded algebra and functors $U,V$.
\end{remark}

\section{A formalism by  theory of Hopf algebras and tensor categories}\label{202403292142}

In this section, 
we discuss a formal aspect of the paper 
including \textup{(}$\mod G$\textup{)}-action on $A=\Lambda * G$-modules given in Definition \ref{202309161735}, 
by using  theory of Hopf algebras and tensor categories. 
For unexplained terminology, we refer \cite{EGNO}.

%
%
%
%

Since the group algebra $\kk G$ has a canonical structure of a Hopf algebra, 
the module category $\mod G=\mod \kk G$ has a canonical structure of a symmetric monoidal category $\cC_{\kk G} := (\mod G, \otimes_{\kk}^{G}, \kk)$. 
(We remark that the tensor product of the monoidal category $\cC_{\kk G}$ is usually denoted by $\otimes_{\kk}$.)

A pair $\tilde{\Lambda} = (\Lambda, \rho)$ of an algebra $\Lambda$ and an action $\rho: G \curvearrowright \Lambda$ may be identified with 
an algebra object $\tilde{\Lambda}$ of  $\cC_{G}$.  
It is well-known that the category  $\tilde{\Lambda}\mod_{\cC_{\kk G}}$ of $\tilde{\Lambda}$-module objects in $\cC_{\kk G}$ 
is equivalent to the category  $\mod A$ of (ordinary) modules over the skew group algebra $A:= \Lambda * G$. 
On the other hand, 
the tensor product $\otimes_{\kk}^{G}$ of $\cC_{\kk G}$ 
induces a functor $\otimes_{\kk}^{G}: \cC_{\kk G} \times \mod_{\cC_{\kk G}} \tilde{\Lambda}  \to \mod_{\cC_{\kk G}}\tilde{\Lambda}$, which gives $\mod_{\cC_{\kk G}} \tilde{\Lambda}$ a structure of a $\cC_{\kk G}$-module category \cite[Proposition 7.8.10]{EGNO}.  
It is straightforward to check that under the equivalence $\mod_{\cC_{\kk G}}\tilde{\Lambda} \simeq \mod A$, 
this functor gives the tensor product $(X, N) \mapsto X \otimes_{\kk}^{G} N$ of Definition \ref{202309161735}.  

Let $\kk G^{*}:=\Hom_{\kk}(G, \kk)$ be the dual Hopf algebra of $\kk G$. 
An action $\rho: G\curvearrowright \Lambda$ may be regarded  as a map $\rho: \kk G \otimes \Lambda \to \Lambda$
that gives a $\kk G$-module algebra structure on the algebra $\Lambda$. 
The induced map $\rho^{*}: \Lambda \to \kk G^{*} \otimes \Lambda$ gives $\Lambda$  a structure of $\kk G^{*}$-comodule algebra. 
For a $\Lambda$-module $M$ and a $\kk G^{*}$-module $Y$, 
we can construct a $\Lambda$-module $Y\otimes_{\kk}^{\kk G^{*}} M$ in the following way: 
the underlying $\kk$-vector space is defined to be the usual tensor product $Y \otimes M$ of the underlying spaces. 
The multiplication $\Lambda \otimes( Y \otimes_{\kk}^{\kk G^{*}} M) \to  Y \otimes_{\kk}^{\kk G^{*}} M$ is defined by the following 
composition 
\[
\Lambda \otimes( Y \otimes M)
 \xrightarrow{ \rho^{*} \otimes \id} 
 \kk G^{*} \otimes \Lambda \otimes( Y \otimes M) 
 \cong (\kk G^{*} \otimes Y) \otimes (\Lambda \otimes M) \to Y \otimes M
\]
where the third map is the tensor product of the action maps of $Y$ and $M$. 
Let $\cC_{\kk G^{*}}=(\mod \kk G^{*}, \otimes_{\kk}^{\kk G^{*}}, \kk)$ be the symmetric monoidal category of $\kk G^{*}$-modules. 
Then the induced functor $\otimes_{\kk}^{\kk G^{*}}: \cC_{\kk G^{*}} \times \mod \Lambda \to \mod \Lambda$ gives 
$\mod \Lambda$ a structure of $\cC_{\kk G^{*}}$-module category.

The dual basis $\{ g^{*} \}_{ g \in G} \subset \kk G^{*}$  of the canonical basis $\{g \}_{g\in G} \subset \kk G$
is a complete set of primitive orthogonal idempotent elements of $\kk G^{*}$ 
and hence $\kk G^{*} \cong \prod_{g \in G}\kk g^{*} \cong \kk^{\times |G|}$ is semi-simple as an algebra.
Thus a $\kk G^{*}$-module $Y$ is a direct sum of simple $\kk G^{*}$-modules $\kk g^{*}$ for $g \in G$ 
and the tensor product $Y \otimes_{\kk}^{\kk G^{*}}-$ is completely determined by $\kk g^{*} \otimes_{\kk}^{\kk G^{*}}-$ for $g \in G$.
Let $g \in G$ and $M$ a $\Lambda$-module. 
Then the $\Lambda$-module  $\kk g^{*} \otimes_{\kk}^{\kk G^{*}} M$ is isomorphic to ${}_{g} M$. 
Hence, we see that $G$-stability  for basic modules coincides with 
($\mod\kk G^{*}$)-stability (defined  in the same way of Definition \ref{202309161735}). 
We understand  that the difference between 
the characterization of $G$-stability (Corollary \ref{202310021718}) and that of \textup{(}$\mod G$\textup{)}-stability (Proposition \ref{202309150913}) is caused by the fact that $\kk G^{*}$ is semi-simple as an algebra but $\kk G$ is not in general. 
Finally,  our main results Theorem \ref{202309301357}  and Theorem \ref{202403121806}  can be looked as bijections between 
($\mod \kk G^{*}$)-stable objects and ($\mod \kk G$)-stable objects. 

It is worth pointing a possible generalization obtained by replacing the group Hopf algebra $\kk G$ 
with a (finite dimensional) Hopf algebra $H$. 
The above constructions works for an  $H$-modules algebra $\Lambda$ and the smash product algebra $A:= \Lambda\# H$. 
Thus we can expect that we have bijections between ($\mod H$)-stable objects and ($\mod H^{*}$)-stable objects. 
(Furthermore, we can expect more general statements by using tensor categories as in \cite{DHL}.)
Some of the result of the paper may be easily generalized for a general Hopf algebra $H$ and its module algebra $\Lambda$. 
However, we remark that 
one of the key step Lemma \ref{202309151536}(2)  does not hold true for a general Hopf algebra $H$ (see, e.g., 
\cite[Theorem 3.2]{CM}, \cite[Corollary 9.3.4]{Montgomery}). 
Thus, we decided to leave generalizations for Hopf algebras to our future research.

\end{document}